\documentclass{amsart}

\usepackage{tikz}
\usetikzlibrary{shapes,arrows,patterns}
\usepackage{enumerate}
\usepackage{float}
\usepackage{blkarray}
\usepackage[T1]{fontenc}

\usepackage{amsthm,amsmath,amssymb,mathtools}
\usepackage{graphicx}

\usepackage[colorlinks=true,citecolor=blue,linkcolor=red,urlcolor=blue]{hyperref}

\newcommand{\arxiv}[1]{\href{http://arxiv.org/abs/#1}{\texttt{arXiv:#1}}}
\textwidth 18cm
\textheight 23cm
\addtolength{\topmargin}{-1.5cm}
\linespread {1.3}
\textwidth 17cm
\textheight 23cm
\addtolength{\hoffset}{-0.3cm}
\oddsidemargin 0cm
\evensidemargin 0cm

\newtheorem{theorem}{Theorem}
\newtheorem{corollary}[theorem]{Corollary}
\newtheorem{lemma}[theorem]{Lemma}
\theoremstyle{remark}
\newtheorem{remark}[theorem]{Remark}
\newtheorem{example}[theorem]{Example}


\begin{document}

\title[Graph reduction techniques and the multiplicity of the Laplacian eigenvalues]{Graph reduction techniques and\\ the multiplicity of the Laplacian eigenvalues}%

\author{Asghar Bahmani}
\address{Department of Mathematics and Computer Science, Amirkabir University of Technology, 424, Hafez Ave., Tehran 15914, Iran.}
\email{asghar.bahmani@aut.ac.ir}
\author{Dariush Kiani}
\address{Department of Mathematics and Computer Science, Amirkabir University of Technology, 424, Hafez Ave., Tehran 15914, Iran, and School of Mathematics, Institute for Research in Fundamental Sciences (IPM), P.O. Box 19395-5746, Tehran, Iran.}
\email{dkiani@aut.ac.ir, dkiani7@gmail.com}

\subjclass[2010]{05C50}
\keywords{weighted graph; Laplacian matrix;  signless Laplacian matrix; eigenvalue}%

\begin{abstract}
Let $M=[m_{ij}]$ be an $n\times m$ real matrix, $\rho$ be a nonzero real number, and $A$ be a symmetric real matrix. We denote by $D(M)$ the $n\times n$ diagonal matrix $diag(\sum_{j=1}^{m}m_{1j},\ldots,\sum_{j=1}^{m}m_{nj})$ and denote by $L_{A}^{\rho}$ the generalized Laplacian matrix $D(A)-\rho A$.
A well-known result of Grone et al. states that by connecting one of the end-vertices of $P_{3}$ to an arbitrary vertex of a graph, does not change the multiplicity of Laplacian eigenvalue $1$. 
We extend this theorem and some other results for a given generalized Laplacian eigenvalue $\mu$.
 Furthermore, we give two proofs for a conjecture by Saito and Woei on the relation between the multiplicity of some Laplacian eigenvalues and pendant paths.
\end{abstract}
\maketitle
\section{Introduction}

Let $A$ be a real symmetric matrix. There exists a unique weighted graph $G$ such that the adjacency matrix of $G$, is $A$; i.e. for $i\neq j$, $A_{ij}$ is the weight of the edge $\{i,j\}$ and $A_{ii}$ is  twice of the weight of the loop at the vertex $i$.  In this paper, we look at real symmetric matrices in this point of view. 

For a positive integer $n$, we denote by $\text{\rm Sym}_{n}(\mathbb{R})$, the set of real symmetric matrices of order $n$ and denote by $[n]$ the set $\{1,\ldots,n\}$.
The multiplicity of an eigenvalue $\lambda$ of $A$ is denoted by $m_{A}(\lambda)$ and a $\lambda$-eigenvector of $A$ is an eigenvector of $A$ corresponding to $\lambda$.
For two positive integers $i$ and $n$, $\pmb{j}_{n}$ and $\pmb{e}_{i}$, denote the all $1$'s vector and the vector with a $1$ in the $i^{\rm th}$ coordinate and $0$'s elsewhere, respectively, in $\mathbb{R}^{n}$.
The restriction of a vector $\pmb{x}$ to any index set $I$ is denoted by $\pmb{x}_{| I}$ and we denote the entry of $\pmb{x}$ corresponding to an index $u$, by $\pmb{x}(u)$.
The identity matrix is denoted by $\mathbb{I}_{n}$  or briefly  $\mathbb{I}$.
The path, the cycle, and the star graph on $n$ vertices are denoted by $P_{n}$, $C_{n}$, and $S_{n}$, respectively.

Let $M=[m_{ij}]$ be an $n\times m$ real matrix. The transpose of $M$ is denoted by $M^{T}$ and we denote by $D(M)$, the $n\times n$ diagonal matrix $diag(\sum_{j=1}^{m}m_{1j},\ldots,\sum_{j=1}^{m}m_{nj})$. For $\rho\in \mathbb{R}-\{0\}$, we denote by $L_{A}^{\rho}$, the \textit{generalized Laplacian matrix} $D(A)-\rho A$. If $\rho=1$ ($\rho=-1$) and $A(G)$ is the adjacency matrix  of a given graph $G$, then we have the Laplacian matrix $L_{A(G)}^{1}=L(G)$ (signless Laplacian matrix $L_{A(G)}^{-1}=Q(G)$, respectively).

Let $\mu$ be a Laplacian eigenvalue of $G$. We consider the results about the relation between $m_{L(G)}(\mu)$ and $m_{L(H)}(\mu)$, for a particular subgraph $H$ of $G$. 
We recall some of these results:

\begin{theorem}{\rm \cite{GMS}}\label{delp3}
If $G'$ is a graph obtained from $G$ by connecting one of the end-vertices of $P_{3}$ to an arbitrary vertex of $G$, then we have $m_{L(G)}(1) =m_{L(G^{\prime})}(1)$.
\end{theorem}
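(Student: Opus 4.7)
\medskip

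\noindent\textbf{Proof proposal.} The plan is to show that restriction to $V(G)$ is a linear bijection between the $1$-eigenspaces of $L(G')$ and $L(G)$. Label the three new vertices by $u$, $v$, $w$ so that $uv$ and $vw$ are the edges of $P_{3}$, and let $x\in V(G)$ be the vertex to which $w$ is attached, so the only edge between $P_{3}$ and $G$ is $wx$.

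First I would exploit the local structure near the pendant path. If $\pmb{x}$ is a $1$-eigenvector of $L(G')$, the three eigenvalue equations at $u$, $v$, $w$ read
\begin{align*}
\pmb{x}(u)-\pmb{x}(v)&=\pmb{x}(u),\\
2\pmb{x}(v)-\pmb{x}(u)-\pmb{x}(w)&=\pmb{x}(v),\\
2\pmb{x}(w)-\pmb{x}(v)-\pmb{x}(x)&=\pmb{x}(w),
\end{align*}
which force $\pmb{x}(v)=0$, $\pmb{x}(w)=-\pmb{x}(u)$, and $\pmb{x}(w)=\pmb{x}(x)$. In particular, the values on $\{u,v,w\}$ are rigidly determined by $\pmb{x}(x)$.

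Next I would verify that the restriction $\pmb{y}:=\pmb{x}_{\,|\,V(G)}$ is a $1$-eigenvector of $L(G)$. For every $z\in V(G)\setminus\{x\}$, the neighborhoods of $z$ in $G$ and $G'$ agree, so $(L(G)\pmb{y})(z)=(L(G')\pmb{x})(z)=\pmb{y}(z)$. At $x$, the degree rises by one and $w$ is a new neighbor, giving
\[
(L(G')\pmb{x})(x)=(L(G)\pmb{y})(x)+\pmb{y}(x)-\pmb{x}(w).
\]
Since $\pmb{x}(w)=\pmb{x}(x)=\pmb{y}(x)$ and $(L(G')\pmb{x})(x)=\pmb{y}(x)$, this collapses to $(L(G)\pmb{y})(x)=\pmb{y}(x)$, as required. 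The restriction map is injective because the prescribed values on $\{u,v,w\}$ vanish whenever $\pmb{y}(x)=0$.

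For surjectivity, given any $1$-eigenvector $\pmb{y}$ of $L(G)$, I would extend it by $\pmb{x}(v)=0$, $\pmb{x}(w)=\pmb{y}(x)$, $\pmb{x}(u)=-\pmb{y}(x)$, and check the five eigenvalue equations at $u$, $v$, $w$, at $x$, and at the remaining vertices of $G$; all four follow from the same computations run in reverse. Hence restriction gives a linear bijection between the two eigenspaces, so $m_{L(G')}(1)=m_{L(G)}(1)$. No step is really a serious obstacle; the only place where something could go wrong is the identity $\pmb{x}(w)=\pmb{x}(x)$, which is precisely the miracle that makes the eigenvalue $1$ special and causes the correction term at $x$ to cancel.
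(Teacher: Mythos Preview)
Your proof is correct. The three local equations at the pendant $P_{3}$ indeed force $\pmb{x}(v)=0$, $\pmb{x}(w)=\pmb{x}(x)$, and $\pmb{x}(u)=-\pmb{x}(x)$, and the identity $\pmb{x}(w)=\pmb{x}(x)$ is exactly what cancels the degree correction at $x$, so restriction is a linear bijection between the two $1$-eigenspaces. (One cosmetic point: your injectivity sentence is phrased oddly---what you mean is that if the restriction $\pmb{y}$ vanishes then in particular $\pmb{y}(x)=0$, whence the rigidly determined values on $\{u,v,w\}$ also vanish and $\pmb{x}=0$.)

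The paper takes a different, more structural route. It deduces Theorem~\ref{delp3} from Corollary~\ref{lap1} (a special case of Theorem~\ref{symmetricII}): since $m_{L(P_{3})}(1)=1$ and the $1$-eigenvector of $P_{3}$ is nonzero at each pendant vertex, that vertex forms a $1$-star set, and the corollary gives $m_{L(G')}(1)=m_{L(G)}(1)+m_{L(P_{3})}(1)-1=m_{L(G)}(1)$. Your argument is elementary and self-contained, tailored to the pair $(P_{3},\mu=1)$; the paper's argument requires the star-set machinery but instantly yields the same conclusion with $P_{3}$ replaced by any graph $H$ having a simple Laplacian eigenvalue $\mu$ whose eigenvector is nonzero at the attaching vertex (for example $H=S_{k}$ and $\mu=k$, recovering Theorem~\ref{delsn}). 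In effect, your computation is what the proof of Theorem~\ref{symmetricII} carries out in this single special case.
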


\begin{theorem}{\rm \cite{Ne}}
Let  $G$  be  any graph  with a simple  Laplacian eigenvalue $\mu$.  Let  $u$ be 
a vertex of $G$  such that an eigenvector corresponding to $\mu$  is nonzero on $u$. 
Let $H$ be  any graph,  and  let $G^{\prime}$ be the graph formed  by joining  an arbitrary 
vertex of  $H$ to $u$.  Then $m_{L(H)}(\mu)=m_{L(G^{\prime})}(\mu)$.
\end{theorem}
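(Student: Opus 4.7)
The plan is to exhibit an explicit linear injection in each direction between the $\mu$-eigenspaces of $L(G')$ and $L(H)$. Let $\pmb{x}$ denote (up to scalar) the unique $\mu$-eigenvector of $L(G)$, and call $v$ the vertex of $H$ that is joined to $u$ in $G'$. Writing vectors on $V(G')=V(G)\cup V(H)$ in the block form $(\pmb{z}|_{V(G)},\pmb{z}|_{V(H)})$, the matrix $L(G')$ differs from the block diagonal $L(G)\oplus L(H)$ only by adding $1$ to the diagonal at $u$ and at $v$ and by placing $-1$ in the two off-diagonal entries linking $u$ to $v$.

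First I would construct an extension map $\Phi$ by sending a $\mu$-eigenvector $\pmb{y}$ of $L(H)$ to $\Phi(\pmb{y}):=(\alpha\pmb{x},\pmb{y})$ with $\alpha:=\pmb{y}(v)/\pmb{x}(u)$. A direct row-by-row verification shows that $L(G')\Phi(\pmb{y})=\mu\Phi(\pmb{y})$: the rows indexed by $V(G)\setminus\{u\}$ and $V(H)\setminus\{v\}$ are automatic from $L(G)\pmb{x}=\mu\pmb{x}$ and $L(H)\pmb{y}=\mu\pmb{y}$, while the two rows at $u$ and $v$ both reduce to the coupling identity $\alpha\pmb{x}(u)=\pmb{y}(v)$, which holds by the choice of $\alpha$. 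Since $\Phi$ is clearly linear and injective, $m_{L(G')}(\mu)\geq m_{L(H)}(\mu)$.

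For the reverse inequality, take any $\mu$-eigenvector $\pmb{z}$ of $L(G')$. Inspecting the rows of $L(G')\pmb{z}=\mu\pmb{z}$ that are supported on $V(G)$ yields the inhomogeneous identity $(L(G)-\mu\mathbb{I})\pmb{z}|_{V(G)}=(\pmb{z}(v)-\pmb{z}(u))\pmb{e}_{u}$. The heart of the argument is a Fredholm alternative: because $\mu$ is a \emph{simple} eigenvalue of the symmetric matrix $L(G)$, the range of $L(G)-\mu\mathbb{I}$ equals $\pmb{x}^{\perp}$, so the right-hand side must satisfy $(\pmb{z}(v)-\pmb{z}(u))\pmb{x}(u)=0$; since $\pmb{x}(u)\neq 0$ by hypothesis, this forces $\pmb{z}(v)=\pmb{z}(u)$. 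Consequently $\pmb{z}|_{V(G)}$ is itself a $\mu$-eigenvector of $L(G)$, hence equals $(\pmb{z}(v)/\pmb{x}(u))\pmb{x}$, and the row at $v$ then forces $\pmb{z}|_{V(H)}$ to be a $\mu$-eigenvector of $L(H)$. The restriction $\pmb{z}\mapsto\pmb{z}|_{V(H)}$ is linear and injective, since $\pmb{z}|_{V(H)}=0$ entails $\pmb{z}(v)=0$ and hence $\pmb{z}|_{V(G)}=0$, giving $m_{L(G')}(\mu)\leq m_{L(H)}(\mu)$ and therefore equality. The Fredholm step is where the hypotheses on $\mu$ and on $\pmb{x}(u)$ are genuinely used, and I expect it to be the main conceptual point: without either of them the coupling term $\pmb{z}(v)-\pmb{z}(u)$ need not vanish and the clean splitting between the two halves of $G'$ would break down.
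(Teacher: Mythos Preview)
Your argument is correct, but it proceeds differently from the paper. The paper does not build the two explicit injections between the eigenspaces; instead it first shows that $\{u\}$ is a $\mu$-star set of $L(G)$ --- that is, that the principal submatrix of $L(G)$ obtained by deleting row and column $u$ has no $\mu$-eigenvalue --- by a short contradiction using the simplicity of $\mu$ and $\pmb{x}(u)\neq 0$, and then reads off $m_{L(G')}(\mu)=m_{L(H)}(\mu)+m_{L(G)}(\mu)-1=m_{L(H)}(\mu)$ from Corollary~\ref{lap1} with $t=1$, which in turn rests on the general Theorem~\ref{symmetricII}. Your Fredholm-alternative step (forcing $\pmb{z}(v)=\pmb{z}(u)$ because $(\pmb{z}(v)-\pmb{z}(u))\pmb{e}_{u}$ must lie in $\pmb{x}^{\perp}$) plays exactly the same structural role as the paper's star-set verification --- both are where the two hypotheses are actually spent --- but your route is more elementary and entirely self-contained, avoiding star sets and the machinery of Theorem~\ref{symmetricII}. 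The paper's route, by contrast, has the advantage of exhibiting this statement as one instance of a broad reduction principle that handles many other configurations simultaneously.
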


A connected sum of two graphs $G_{1}$ and $G_{2}$ is any graph $G$ where $V(G)=V(G_{1})\cup V(G_{2})$ and $E(G)$
differs from $E(G_{1})\cup E(G_{2})$ by the addition of a single edge joining some (arbitrary) vertex of $V(G_{1})$
to some vertex of $V(G_{2})$, and is denoted by $G=G_{1}\#G_{2}$ \cite{GMS}.
\begin{theorem}{\rm \cite{GMS}}\label{delsn}
Let $G$ be a nonempty graph on $n$ vertices. Let $H = G\#S_{k}$ be a connected sum of $G$ with the star on $k>1$ vertices. Then $m_{L(G)}(k) =m_{L(H)}(k)$
\end{theorem}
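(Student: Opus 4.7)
\medskip

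\noindent\textbf{Proof proposal.} The plan is to exhibit an explicit linear isomorphism between the $k$-eigenspaces of $L(G)$ and $L(H)$, obtained by simple restriction to $V(G)$, with inverse given by an explicit extension formula for the values on the star vertices. Let $v\in V(G)$ be the endpoint in $G$ of the connecting edge, let $c$ be the center of $S_k$, and let $\ell_1,\dots,\ell_{k-1}$ be its leaves; by the definition of connected sum, the edge joins $v$ either to $c$ (Case 1) or to some leaf, say $\ell_1$ (Case 2).

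\medskip

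I would first solve the eigenvalue equation $L(H)\mathbf{y}=k\mathbf{y}$ on the star portion of $V(H)$. Each leaf $\ell$ not incident to the new edge gives $(1-k)\mathbf{y}(\ell)=\mathbf{y}(c)$, which uses $k>1$ to conclude that all such leaves carry the common value $\mathbf{y}(\ell)=-\mathbf{y}(c)/(k-1)$. Plugging this into the equation at $c$ (in Case 1) or at $\ell_1$ together with the equation at $c$ (in Case 2) and doing a short computation, one finds in \emph{both} cases that the value of $\mathbf{y}$ at the star-side endpoint of the new edge equals $\mathbf{y}(v)$. This is the crucial identity: it shows that the star values are entirely determined by $\mathbf{y}(v)$ via explicit linear formulas, so the extension from $V(G)$ to $V(H)$ exists and is unique.

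\medskip

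With this identity in hand, the equation of $L(H)\mathbf{y}=k\mathbf{y}$ at the vertex $v$ becomes
\[
(d_G(v)+1)\mathbf{y}(v)-\sum_{w\sim_G v}\mathbf{y}(w)-\mathbf{y}(v)=k\,\mathbf{y}(v),
\]
which simplifies to $(L(G)\mathbf{y}_G)(v)=k\,\mathbf{y}_G(v)$ where $\mathbf{y}_G:=\mathbf{y}|_{V(G)}$. At every other vertex of $G$ the equations coincide trivially, so $\mathbf{y}_G$ is a $k$-eigenvector of $L(G)$. Conversely, given any $k$-eigenvector $\mathbf{x}$ of $L(G)$, define its extension $\tilde{\mathbf{x}}$ to $V(H)$ using the same formulas from the previous paragraph; the calculation just done runs in reverse to confirm $L(H)\tilde{\mathbf{x}}=k\tilde{\mathbf{x}}$. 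Finally, restriction is injective because the explicit formulas express $\mathbf{y}(c)$, and hence every leaf value, as a scalar multiple of $\mathbf{y}(v)$, so $\mathbf{y}_G=\mathbf{0}$ forces $\mathbf{y}=\mathbf{0}$. Hence restriction is a linear bijection and $m_{L(G)}(k)=m_{L(H)}(k)$.

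\medskip

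I do not expect a serious obstacle here; the proof is essentially a direct computation. The one point that requires care is Case 2, since $\ell_1$ has a different neighborhood in $H$ than the other leaves: one must verify that after solving the coupled equations at $\ell_1$ and $c$ the value $\mathbf{y}(\ell_1)$ actually coincides with the common value $-\mathbf{y}(c)/(k-1)$ found on the other leaves, so that the reduction at $v$ still works uniformly. This is the reason the hypothesis $k>1$ is genuinely needed (to avoid dividing by $k-1=0$), and it is what makes the uniform treatment of both attachment cases possible.
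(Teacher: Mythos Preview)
Your argument is correct. The computations in both cases check out: in Case~1 one finds $\mathbf{y}(c)=\mathbf{y}(v)$ directly, and in Case~2 solving the equations at $c$ and at $\ell_1$ gives $\mathbf{y}(\ell_1)=-\mathbf{y}(c)/(k-1)$ (so $\ell_1$ indeed carries the same value as the other leaves) and then $\mathbf{y}(\ell_1)=\mathbf{y}(v)$. From there the reduction at $v$ and the injectivity/surjectivity of restriction are exactly as you describe.

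The paper, however, does not argue this way. It derives Theorem~\ref{delsn} as a one-line application of Corollary~\ref{lap1} (the $t=1$ case of the star-set machinery in Theorem~\ref{symmetricII}): since $k$ is a \emph{simple} Laplacian eigenvalue of $S_k$ and its eigenvector $(1-k,1,\dots,1)$ is nowhere-zero, every single vertex of $S_k$ is a $k$-star set, and Corollary~\ref{lap1} immediately gives $m_{L(H)}(k)=m_{L(G)}(k)+m_{L(S_k)}(k)-1=m_{L(G)}(k)$. Your approach is more elementary and self-contained, with the advantage that it makes the mechanism completely explicit (it is essentially the Edge Principle: the equality $\mathbf{y}(v)=\mathbf{y}(\text{star endpoint})$ means the bridging edge is ``invisible'' to the eigenvector). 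The paper's approach, by contrast, trades this explicitness for generality---the same lemma handles any attached graph with a simple eigenvalue and nowhere-zero eigenvector, not just stars.
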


A \textit{cluster} of a graph $G$ is an independent set of two or more vertices of $G$, each of which has the same set of neighbours. The degree of a cluster is the cardinality of its shared set of neighbours, i.e., the common degree of each vertex in the cluster. A $d$-cluster is a cluster of degree $d$. The number of vertices in a $d$-cluster is its order. A collection of two or more $d$-clusters is independent if the sets of vertices comprising the $d$-clusters are pairwise disjoint \cite{GM}.

\begin{theorem}{\rm \cite{GM}}\label{dcluster}
 Let $G$ be a graph with $k$ independent $d$-clusters of orders $r_{1},\ldots,r_{k}$. Then $m_{L(G)}(d)\geq \sum_{i=1}^{k} r_{i} -k$.
 \end{theorem}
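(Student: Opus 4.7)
The plan is to exhibit $\sum_{i=1}^{k} r_i - k$ linearly independent $d$-eigenvectors of $L(G)$, one batch per cluster, and argue that combining batches from disjoint clusters preserves independence.

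First I would fix a single $d$-cluster $C=\{v_1,\ldots,v_r\}$ with common neighbourhood $N$, where $|N|=d$. Consider the subspace
\[
V_C = \bigl\{\pmb{x}\in\mathbb{R}^{V(G)} : \operatorname{supp}(\pmb{x})\subseteq C \text{ and } \textstyle\sum_{j=1}^{r}\pmb{x}(v_j)=0\bigr\},
\]
which has dimension $r-1$. The key computation is to verify $L(G)\pmb{x}=d\pmb{x}$ for $\pmb{x}\in V_C$. At a cluster vertex $v_j$, the degree term yields $d\pmb{x}(v_j)$ and the off-diagonal contributions come only from $N$, where $\pmb{x}$ vanishes, so $(L(G)\pmb{x})(v_j)=d\pmb{x}(v_j)$. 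At a non-cluster vertex $w$, we have $\pmb{x}(w)=0$; if $w\in N$ then the off-diagonal contribution is $-\sum_{j=1}^{r}\pmb{x}(v_j)=0$ by the zero-sum condition, and if $w\notin N$ then no $v_j$ is adjacent to $w$ so the contribution is $0$ as well. Thus every $\pmb{x}\in V_C$ lies in the $d$-eigenspace.

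Next, for the $k$ independent $d$-clusters $C_1,\ldots,C_k$ I would form $V_{C_1}+\cdots+V_{C_k}$. Because the clusters are pairwise vertex-disjoint, the supports of vectors from distinct $V_{C_i}$ are disjoint, so this sum is direct and has dimension $\sum_{i=1}^{k}(r_i-1)=\sum_{i=1}^{k}r_i-k$. Since this whole subspace sits inside the $d$-eigenspace of $L(G)$, the bound $m_{L(G)}(d)\geq \sum_{i=1}^{k}r_i-k$ follows.

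I do not anticipate a serious obstacle: the eigenvector computation is the only substantive step, and it reduces to checking that the zero-sum condition kills precisely the cross terms arising at the shared neighbours in $N$. The remaining work is a dimension count that is immediate from the pairwise disjointness of the clusters.
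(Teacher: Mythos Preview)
Your proof is correct. The paper takes a different route: it derives this statement as a corollary of its general Theorem~\ref{cluster} by specializing the parameters to $\rho=1$, $\mathcal{H}=\mathcal{H}_i=[0]_{1\times 1}$, $\mathcal{L}_i=[0]_{d\times d}$, and $\mathcal{A}=B_i^T=\pmb{j}_d$, so that each auxiliary matrix $\mathcal{E}_i$ is the adjacency matrix of $K_{2,d}$ with explicit $d$-eigenvector $(1,0,\ldots,0,-1)^T$ vanishing on the middle block; Theorem~\ref{cluster} is then applied $k$ times, once per cluster. If one unwinds that machinery, the eigenvectors it actually produces in $L(G)$ are the vectors $\pmb{e}_{v_1}-\pmb{e}_{v_j}$ supported on each cluster, which form a basis of your space $V_C$. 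So the underlying linear-algebraic content is identical; your argument is self-contained and more transparent for this particular statement, while the paper's version serves to illustrate that its abstract framework recovers the classical result as a special case.
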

 
 Among other results, we generalize these results for weighted graphs and an arbitrary generalized Laplacian eigenvalue $\mu$.

A \textit{pendant path} of a graph $G$ is a path such that one of its end vertices has degree one and all the internal vertices have degree two and other end vertex has degree greater than two.
$p_k(G)$ denotes the number of pendant paths of length $k$, and $q_k(G)$ n the number of vertices with degree greater than three which are an end vertex of some pendant paths of length $k$.
If $k=1$, we have the well-known result of Faria \cite{Fa} that $m_{L(G)}(1)\geq p_{1}(G) - q_{1}(G)$.
Saito and  Woei \cite{SW} conjectured that  for any positive integer $k$, any graph $G$ has some Laplacian eigenvalue with multiplicity at least $p_k(G)-q_k(G)$ and proved it for $k=2$. The following generalization of the conjecture has been proved in \cite{Gh}. We give two proofs for this theorem in the next sections.

\begin{theorem}{\rm \cite{Gh}}\label{pkqk}
Let $G$ be a graph. Then $\displaystyle 4\cos^{2}(\frac{\pi i}{2k + 1})$ for any $k\geq 1$ and $i =1,\ldots,k$, is
both a Laplacian and a signless Laplacian eigenvalue of $G$ with multiplicity at least $p_{k}(G) - q_{k}(G)$.
\end{theorem}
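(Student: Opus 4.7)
The plan is to construct $p_k(G) - q_k(G)$ linearly independent $\mu$-eigenvectors of $L(G)$ (and of $Q(G)$) by exploiting multiple pendant paths of length $k$ hanging from a common branching vertex. The construction is local: on each pendant path $u - v_1 - v_2 - \cdots - v_k$ (with $u$ the branching end of degree $>2$ and $v_k$ the pendant vertex), the $k\times k$ principal submatrix of $L(G)$ on $\{v_1,\ldots,v_k\}$ is the tridiagonal ``Dirichlet--Neumann'' matrix
$$L_k = \begin{pmatrix} 2 & -1 & & & \\ -1 & 2 & -1 & & \\ & \ddots & \ddots & \ddots & \\ & & -1 & 2 & -1 \\ & & & -1 & 1 \end{pmatrix}.$$
I claim its eigenvalues are exactly the $k$ numbers $4\cos^{2}\!\bigl(\pi i/(2k+1)\bigr)$ for $i=1,\ldots,k$.

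To see this, I would solve the recurrence $\mathbf{y}_{i-1} + \mathbf{y}_{i+1} = (2-\mu)\mathbf{y}_i$ subject to $\mathbf{y}_0=0$ and the Neumann-like relation $\mathbf{y}_k - \mathbf{y}_{k-1} = \mu \mathbf{y}_k$ at the pendant end. The ansatz $\mathbf{y}_i = \sin(i\theta)$ with $2-\mu = 2\cos\theta$ automatically satisfies the recurrence and the Dirichlet condition; the boundary condition at $v_k$ then collapses (via standard product-to-sum identities) to $\cos\!\bigl((2k+1)\theta/2\bigr)=0$, whose solutions $\theta = (2j+1)\pi/(2k+1)$, $j=0,\ldots,k-1$, give $\mu = 4\sin^{2}(\theta/2) = 4\cos^{2}\!\bigl(\pi(k-j)/(2k+1)\bigr)$, i.e.\ the claimed values.

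Fix one such $\mu$ and a corresponding eigenvector $\mathbf{y}\in\mathbb{R}^{k}$ of $L_k$. For each branching vertex $u$ that is the endpoint of $r=r(u)$ pendant paths of length $k$, I would form $r$ vectors $\mathbf{x}^{(1)},\ldots,\mathbf{x}^{(r)}\in\mathbb{R}^{V(G)}$, where $\mathbf{x}^{(s)}$ places $\mathbf{y}_1,\ldots,\mathbf{y}_k$ along the $s$-th pendant path and is zero everywhere else (in particular at $u$). The differences $\mathbf{x}^{(s)} - \mathbf{x}^{(1)}$, $s=2,\ldots,r$, are then $\mu$-eigenvectors of $L(G)$: interior path equations follow from $L_k\mathbf{y}=\mu\mathbf{y}$, and the equation at $u$ reduces to $\mathbf{y}_1 - \mathbf{y}_1 = 0$ because the difference vanishes at $u$ and at all neighbours of $u$ except the two path-starting vertices, whose values cancel. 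Vectors associated with distinct branching vertices have essentially disjoint supports, so pooling across the $q_k(G)$ branching vertices yields $\sum_u (r(u)-1) = p_k(G) - q_k(G)$ linearly independent $\mu$-eigenvectors of $L(G)$. For the signless Laplacian, I would observe that the analogous submatrix $Q_k$ (with $+1$'s off-diagonal) is similar to $L_k$ via conjugation by $\mathrm{diag}(1,-1,1,\ldots)$, so it has the same spectrum; repeating the construction with $\mathbf{y}_i$ replaced by $(-1)^{i+1}\mathbf{y}_i$ gives the corresponding count for $Q(G)$.

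The main obstacle is the mixed boundary-condition bookkeeping: the Dirichlet-type condition $\mathbf{y}_0 = 0$ is not intrinsic to the pendant path but is \emph{induced} by enforcing $\mathbf{x}^{(s)}(u)=0$ (which in turn forces the cancellation step via taking differences of two path extensions), while the Neumann-type condition at $v_k$ comes from the degree-one vertex. Identifying the correct trigonometric reduction for this mixed problem and matching it to the form $4\cos^{2}\!\bigl(\pi i/(2k+1)\bigr)$ is the technical heart of the argument; everything else is a routine eigenvector-counting exercise.
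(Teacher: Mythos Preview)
Your argument is correct and is essentially the paper's first proof: both construct $\mu$-eigenvectors supported on pairs of pendant $k$-paths sharing a branching vertex, using that the relevant Laplacian eigenvector of $P_{2k+1}$ vanishes at its middle vertex (equivalently, that your mixed Dirichlet--Neumann matrix $L_{k}$ has the stated spectrum). The paper packages the local difference construction and the count $\sum_{u}(r(u)-1)=p_{k}(G)-q_{k}(G)$ into a general reduction lemma (its Theorem~\ref{cluster}) and quotes the $P_{2k+1}$ eigenvectors from Lemma~\ref{pathvec}, whereas you carry out the tridiagonal boundary-value computation and the pairwise differences by hand; the underlying mechanism is identical.
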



Let $A\in \text{\rm Sym}_{n}(\mathbb{R})$ and $\lambda$ be an eigenvalue of $A$ of multiplicity $k$. A set $U\subseteq [n]$ is a \textit{star set} for $\lambda$ (or $\lambda$-star set) of  $A$ if $|U|=k$ and $\lambda$  is not an eigenvalue of the submatrix of $A$ obtained by removing rows and columns with index in $U$. It is known that for every eigenvalue $\lambda$ there exists a $\lambda$-star set \cite{CRS}.

We recall the following theorem about star sets that we use in the next sections.
\begin{theorem}\label{starcorr}{\rm \cite[Theorem 7.2.6]{CRS}}
Let $U$ be  a $\lambda$-star set of $A$.   If $m_{A}(\lambda)=k$, then there exists a basis of eigenvectors $\{\pmb{\alpha}_{s}:s\in U\}$  such that $\pmb{\alpha}_{s}(t)=\delta_{st}$, whenever $s,t\in U$ and $\delta$ is the Kronecker delta function; its value is $1$ if $s=t$, and $0$ otherwise.
\end{theorem}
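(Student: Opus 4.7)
The plan is to use the block decomposition of $A$ with respect to the partition $[n]=U\cup\bar U$, where $\bar U=[n]\setminus U$, and to show that the restriction map from the $\lambda$-eigenspace to $\mathbb{R}^{U}$ is an isomorphism. Write
\[
A=\begin{pmatrix} A[U] & B\\ B^{T} & A[\bar U]\end{pmatrix},
\]
let $E_{\lambda}$ denote the eigenspace of $A$ for $\lambda$, which has dimension $k$, and consider the linear map $\pi\colon E_{\lambda}\to\mathbb{R}^{U}$ defined by $\pi(\pmb{x})=\pmb{x}_{|U}$.

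The main step is to prove that $\pi$ is injective. Suppose $\pmb{x}\in E_{\lambda}$ satisfies $\pmb{x}_{|U}=0$. Splitting the identity $A\pmb{x}=\lambda\pmb{x}$ into its $U$- and $\bar U$-components yields $B\pmb{x}_{|\bar U}=0$ and $A[\bar U]\pmb{x}_{|\bar U}=\lambda\pmb{x}_{|\bar U}$. The second equation says that $\pmb{x}_{|\bar U}$ is a $\lambda$-eigenvector of $A[\bar U]$, but by the defining property of a $\lambda$-star set $\lambda$ is not an eigenvalue of $A[\bar U]$. Hence $\pmb{x}_{|\bar U}=0$ and so $\pmb{x}=0$. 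Since $\dim E_{\lambda}=k=|U|=\dim\mathbb{R}^{U}$, injectivity upgrades to bijectivity.

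With $\pi$ an isomorphism, for each $s\in U$ let $\pmb{\alpha}_{s}$ be the unique preimage of the standard basis vector $\pmb{e}_{s}\in\mathbb{R}^{U}$ under $\pi$. By construction $\pmb{\alpha}_{s}(t)=\delta_{st}$ for all $s,t\in U$, and since the vectors $\pmb{e}_{s}$ form a basis of $\mathbb{R}^{U}$ and $\pi$ is an isomorphism, the family $\{\pmb{\alpha}_{s}:s\in U\}$ is a basis of $E_{\lambda}$, giving exactly the basis claimed in the theorem.

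The only delicate point is the injectivity argument, which relies directly on the definition of a star set; once this is in hand, the dimension count and the inverse-image construction are routine. No further machinery (e.g.\ Schur complements or Cauchy interlacing) is required, so I expect the proof to be short and essentially free of obstacles beyond setting up the block-matrix computation.
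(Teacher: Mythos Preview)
Your proof is correct. Note, however, that the paper does not supply its own proof of this theorem: it is quoted as \cite[Theorem~7.2.6]{CRS} and used as a black box. Your argument---showing that the restriction map $\pi\colon E_{\lambda}\to\mathbb{R}^{U}$ is injective from the star-set hypothesis, hence bijective by a dimension count, and then pulling back the standard basis---is exactly the standard proof given in the cited reference, so there is nothing to compare.
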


\section{Type I Reductions:  Edge Deleting}
In this section, for a given eigenvalue $\mu$, we remove a particular subgraph  corresponding to $\mu$ and consider the multiplicity of $\mu$ of remaining graph.

First, we state this following Edge  Principle Theorem.
\begin{theorem}{\rm \cite{Me}}
Let  $\mu$ be  a Laplacian  eigenvalue  of  $G$  afforded  by eigenvector  $\pmb{x}$. If $x_{i}=x_{j}$,  then  $\mu$ is  an  eigenvalue  of  $G'$ afforded  by  $\pmb{x}$,  where  $G'$  is the  graph  obtained  from  $G$ by  deleting  or  adding  $e=\{i, j\}$  depending  on  whether or  not  it  is  an  edge  of  $G$.
\end{theorem}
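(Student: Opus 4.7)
The plan is to exhibit $L(G')$ as $L(G)$ plus or minus the Laplacian of the single edge $e=\{i,j\}$, and then observe that any such edge Laplacian annihilates every vector whose entries at $i$ and $j$ coincide.

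First, I would recall that the Laplacian of a single edge $\{i,j\}$, viewed as a spanning subgraph on $n$ vertices, is the rank-one positive semidefinite matrix
\begin{equation*}
L_{e} \;=\; (\pmb{e}_{i}-\pmb{e}_{j})(\pmb{e}_{i}-\pmb{e}_{j})^{T}.
\end{equation*}
Since the Laplacian matrix is additive over edge-disjoint decompositions, we have $L(G') = L(G) - L_{e}$ if $e \in E(G)$ and $L(G') = L(G) + L_{e}$ if $e \notin E(G)$.

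Second, I would apply $L_{e}$ to $\pmb{x}$ and use the hypothesis $x_{i}=x_{j}$:
\begin{equation*}
L_{e}\pmb{x} \;=\; (\pmb{e}_{i}-\pmb{e}_{j})\bigl((\pmb{e}_{i}-\pmb{e}_{j})^{T}\pmb{x}\bigr) \;=\; (x_{i}-x_{j})(\pmb{e}_{i}-\pmb{e}_{j}) \;=\; \pmb{0}.
\end{equation*}
Combining the two displays gives $L(G')\pmb{x} = L(G)\pmb{x} \pm L_{e}\pmb{x} = \mu\pmb{x}$, which is the desired conclusion.

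There is essentially no obstacle: the proof reduces to the elementary fact that an edge Laplacian has rank one with kernel containing $\pmb{e}_{i}+\pmb{e}_{j}$ (and everything orthogonal to $\pmb{e}_{i}-\pmb{e}_{j}$). The only point to be mindful of is the sign convention on $L_{e}$, which must match whichever one of $L(G)$ or $L(G')$ contains the edge; otherwise the argument is a single line.
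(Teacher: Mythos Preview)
Your proof is correct. The paper does not prove this cited theorem of Merris directly, but it proves a weighted generalization (Lemma~\ref{edgeprin2}) by the same mechanism: write the difference $L_{A'}^{\rho}-L_{A}^{\rho}$ explicitly as a matrix supported on the $\{i,j\}$ block and check that it annihilates $\pmb{x}$ when $x_i=\rho x_j$. Your rank-one formulation $L_e=(\pmb{e}_i-\pmb{e}_j)(\pmb{e}_i-\pmb{e}_j)^T$ is simply a cleaner packaging of that same $2\times 2$ block computation in the case $\rho=1$, $a=\pm 1$.
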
 

Now, we state a weighted version of theorem above, for the Laplacian and the signless Laplacian of weighted graphs:

\begin{lemma}\label{edgeprin2}
Let  $n\in \mathbb{N}, \rho\in \{-1,1\}$, $A\in \text{\rm Sym}_{n}(\mathbb{R})$, and $\mu$ be  an  eigenvalue  of  $L_{A}^{\rho}$  with a $\mu$-eigenvector  $\pmb{x}$. Suppose that $a\in \mathbb{R}$ and  $x_{i}=\rho x_{j}$, for some $i,j\in [n],\, i\neq j$. If $A'$  is the  matrix  obtained  from  $A$ by setting  $A'_{ij}=A'_{ji}=A_{ij}+a$,  then $\pmb{x}$ is a $\mu$-eigenvector   of  $L_{A'}^{\rho}$.
\end{lemma}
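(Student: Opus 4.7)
The plan is to show that $L_{A'}^{\rho}\pmb{x}=L_{A}^{\rho}\pmb{x}$, which immediately gives $L_{A'}^{\rho}\pmb{x}=\mu\pmb{x}$. To this end, I would compute the difference $B:=L_{A'}^{\rho}-L_{A}^{\rho}$ entry by entry, and then verify that $B\pmb{x}=\pmb{0}$ using the hypothesis $x_i=\rho x_j$.

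First, I would observe that $A'-A$ is the matrix whose only nonzero entries are $a$ in positions $(i,j)$ and $(j,i)$. Consequently $D(A')-D(A)$ is diagonal with an entry $a$ at positions $(i,i)$ and $(j,j)$ and zeros elsewhere. Therefore $B=(D(A')-D(A))-\rho(A'-A)$ has only four nonzero entries:
\begin{equation*}
B_{ii}=B_{jj}=a,\qquad B_{ij}=B_{ji}=-\rho a,
\end{equation*}
and all other entries vanish.

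Next, I would evaluate $B\pmb{x}$. Since $B$ is supported only on rows $i$ and $j$, the only possibly nonzero coordinates of $B\pmb{x}$ are the $i^{\text{th}}$ and $j^{\text{th}}$. One computes
\begin{equation*}
(B\pmb{x})_i=a\,x_i-\rho a\,x_j=a(x_i-\rho x_j),\qquad (B\pmb{x})_j=a\,x_j-\rho a\,x_i=a(x_j-\rho x_i).
\end{equation*}
The hypothesis $x_i=\rho x_j$ makes the first expression vanish. For the second, note that $\rho\in\{-1,1\}$ implies $\rho^2=1$, so $x_i=\rho x_j$ is equivalent to $x_j=\rho x_i$, which makes the second expression vanish as well. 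Hence $B\pmb{x}=\pmb{0}$, so $L_{A'}^{\rho}\pmb{x}=L_{A}^{\rho}\pmb{x}=\mu\pmb{x}$.

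There is no real obstacle here; the only subtlety worth flagging explicitly is the symmetry between the conditions $x_i=\rho x_j$ and $x_j=\rho x_i$, which uses precisely the fact that $\rho\in\{-1,1\}$. This is also what confines the statement to the (signless) Laplacian setting and prevents an immediate extension to arbitrary $\rho\in\mathbb{R}-\{0\}$.
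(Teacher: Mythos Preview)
Your proof is correct and follows essentially the same route as the paper: both isolate the difference $B=L_{A'}^{\rho}-L_{A}^{\rho}$, identify its four nonzero entries $B_{ii}=B_{jj}=a$ and $B_{ij}=B_{ji}=-\rho a$, and verify directly that $B\pmb{x}=\pmb{0}$ using $x_i=\rho x_j$ (and $\rho^2=1$). Your explicit remark that $\rho\in\{-1,1\}$ is exactly what makes $x_i=\rho x_j$ equivalent to $x_j=\rho x_i$ is a nice clarification that the paper leaves implicit.
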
 
\begin{proof}
We have {\footnotesize ${L_{A}^{\rho}}=
{L_{A'}^{\rho}}-
\begin{blockarray}{ccccc}
\begin{block}{c(cccc)}
i &   & a      & -\rho a  &   \\
j &  &  -\rho a       & a  &  \\
\end{block}
\end{blockarray}$.} So,\\
{\footnotesize
$
\mu\pmb{x}={L_{A}^{\rho}}\pmb{x}=
{L_{A'}^{\rho}}\pmb{x}-
\begin{blockarray}{ccccc}
 & &i&j& \\
\begin{block}{c(cccc)}
 & &    &  &  \\
i &   & a      & -\rho a  &   \\
j &  &  -\rho a       & a  &  \\
  & &      &      &  \\
\end{block}
\end{blockarray}\,\pmb{x}=
{L_{A'}^{\rho}}\pmb{x}-
\begin{blockarray}{cc}
\begin{block}{c(c)}
 & 0 \\
i & a x_i-\rho a x_j      \\
j & - \rho a  x_i+ a x_j  \\
  & 0 \\
\end{block}
\end{blockarray}=
{L_{A'}^{\rho}}\pmb{x}$.}
\end{proof}

The following theorem is the main theorem of this section.

\begin{theorem}\label{symmetricII}
Let $\mu\in \mathbb{R}$, $\rho\in \{-1,1\}$, and $\mathcal{H},\mathcal{L}$ be real symmetric matrices with row and column indices $I=I_{1}\dot{\cup} I_{2}\dot{\cup} I_{3}$ and $J=J_{1}\dot{\cup} J_{2}$, respectively. Suppose that $I_{1}\dot{\cup} I_{2}$ is a $\mu$-star set of $L_{\mathcal{H}}^{\rho}$. If  $X,\mathcal{G},$ and $\mathcal{E}$ are matrices given below,
\begin{equation*}
\resizebox{ 0.7\textwidth}{!} 
{$
\newcommand*{\temp}{\BAmulticolumn{1}{|c}{}}
\newcommand*{\tempa}{\BAmulticolumn{1}{|c}{\mathcal{A}}}
\newcommand*{\tempj}{\BAmulticolumn{1}{|c}{J_{2}}}
X=
\begin{blockarray}{ccccc}
\begin{block}{c(cccc)}
  I_{11} & \pmb{x}_{1}  & \pmb{0}&\cdots &   \pmb{0}   \\
  I_{12} &  \pmb{0} &  \pmb{x}_{2}& \cdots &  \pmb{0} \\
  \vdots & \vdots &  \vdots &  \ddots &  \vdots   \\
  I_{1|J_{1}|} & \pmb{0}&  \pmb{0} & \cdots & \pmb{x}_{|J_{1}|}  \\
\end{block}
\end{blockarray}
\,,\,\,
\mathcal{G}=
\begin{blockarray}{cccc|cc}
 & I_{1} & I_{2} & I_{3}  & J_{1}  &\tempj \\
\begin{block}{c(ccc|cc)}
  I_{1} & & & &  X  &\temp  \\
  I_{2} & & \mathcal{H}& &  0 &\tempa\\
  I_{3} &  & & & 0 &\temp  \\\cline{1-6}
  J_{1} & X^{T} & 0 & 0 &    &  \\\cline{1-4}
  J_{2} &  &\mathcal{A}^{T} & & \mathcal{L}    & \\
\end{block}
\end{blockarray}\,,\,\,
\mathcal{E}=
\begin{blockarray}{cccc|cc}
 & I_{1} & I_{2} & I_{3}  & J_{1}  &\tempj \\
\begin{block}{c(ccc|cc)}
  I_{1} & & & &  0  &\temp  \\
  I_{2} & & \mathcal{H}& &  0 &\tempa\\
  I_{3} &  & & & 0 &\temp  \\\cline{1-6}
  J_{1} & 0 & 0 & 0 &    &  \\\cline{1-4}
  J_{2} &  &\mathcal{A}^{T} & & \mathcal{L}    & \\
\end{block}
\end{blockarray}\,,$}
\end{equation*}
for nowhere-zero vectors $\{\pmb{x}_{i}\}$ and a matrix $\mathcal{A}$, where $D(\mathcal{A})=0$ and $\mathcal{A}^{T}\pmb{\alpha}=\pmb{0}$, for every $\mu$-eigenvector $\pmb{\alpha}$ of $L_{\mathcal{H}}^{\rho}$,
then
$\displaystyle m_{L_{\mathcal{E}}^{\rho}}(\mu)=m_{L_{\mathcal{G}}^{\rho}}(\mu)+|I_{1}|$.
\end{theorem}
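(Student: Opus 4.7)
Set $M:=L_{\mathcal{G}}^{\rho}-L_{\mathcal{E}}^{\rho}$, and denote the $\mu$-eigenspaces of $L_{\mathcal{G}}^{\rho}$ and $L_{\mathcal{E}}^{\rho}$ by $E_{\mathcal{G}},E_{\mathcal{E}}$. Since $\mathcal{G}$ and $\mathcal{E}$ agree outside the $(I_{1},J_{1})$ and $(J_{1},I_{1})$ blocks, $M$ is supported on $I_{1}\cup J_{1}$ and there equals the generalized Laplacian of the bipartite graph in which each $k\in J_{1}$ is joined to the vertices of $I_{1k}$ with edge weights $\pmb{x}_{k}$. Because each $\pmb{x}_{k}$ is nowhere zero and the $I_{1k}$'s are pairwise disjoint, this bipartite graph is a disjoint union of $|J_{1}|$ weighted stars; a direct check of the kernel of a star Laplacian (valid for both $\rho=\pm 1$) gives $M\pmb{z}=\pmb{0}$ if and only if $\pmb{z}(i)=\rho\,\pmb{z}(k)$ whenever $i\in I_{1k}$. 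This motivates the linear map $\Phi:E_{\mathcal{E}}\to\mathbb{R}^{I_{1}}$ defined by $\Phi(\pmb{y})(i):=\pmb{y}(i)-\rho\,\pmb{y}(k)$ for $i\in I_{1k}$, whose kernel is precisely $E_{\mathcal{E}}\cap E_{\mathcal{G}}$.

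The strategy is to prove (a) the inclusion $E_{\mathcal{G}}\subseteq E_{\mathcal{E}}$, so that $\ker\Phi=E_{\mathcal{G}}$, and (b) the surjectivity of $\Phi$; rank-nullity will then deliver $\dim E_{\mathcal{E}}=\dim E_{\mathcal{G}}+|I_{1}|$, which is the conclusion. For (b), I would apply Theorem~\ref{starcorr} to obtain a basis $\{\pmb{\alpha}_{s}:s\in U\}$ of the $\mu$-eigenspace of $L_{\mathcal{H}}^{\rho}$ satisfying $\pmb{\alpha}_{s}(t)=\delta_{st}$ on $U=I_{1}\cup I_{2}$. The lift $\pmb{\beta}_{s}:=(\pmb{\alpha}_{s},\pmb{0})\in\mathbb{R}^{I\cup J}$ lies in $E_{\mathcal{E}}$: the $(I,I)$-block of $L_{\mathcal{E}}^{\rho}$ coincides with $L_{\mathcal{H}}^{\rho}$ because $D(\mathcal{A})=0$ kills the would-be extra contribution to the $I_{2}$-diagonal; the $J_{1}$-rows of the eigenvector equation vanish automatically; and the $J_{2}$-rows reduce to $-\rho\,\mathcal{A}^{T}\pmb{\alpha}_{s}=\pmb{0}$ by hypothesis. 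A one-line computation then gives $\Phi(\pmb{\beta}_{s})=\pmb{e}_{s}$ for each $s\in I_{1}$, yielding surjectivity.

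The main substance, and the point I expect to be the main obstacle, is (a). Fix $\pmb{y}\in E_{\mathcal{G}}$ and set $\pmb{r}:=(L_{\mathcal{H}}^{\rho}-\mu\mathbb{I})\,\pmb{y}_{|I}$. Reading $L_{\mathcal{G}}^{\rho}\pmb{y}=\mu\pmb{y}$ block by block: because $\mathcal{G}_{I_{3},J}=0$, the $I_{3}$-rows force $\pmb{r}_{|I_{3}}=\pmb{0}$; the $I_{1}$-rows yield $\pmb{r}_{|I_{1k}}(i)=-\pmb{x}_{k}(i)\bigl(\pmb{y}(i)-\rho\,\pmb{y}(k)\bigr)$; and the $I_{2}$-rows (again using $D(\mathcal{A})=0$) yield $\pmb{r}_{|I_{2}}=\rho\,\mathcal{A}\,\pmb{y}_{|J_{2}}$. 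Symmetry of $L_{\mathcal{H}}^{\rho}$ makes $\pmb{r}\in\mathrm{Im}(L_{\mathcal{H}}^{\rho}-\mu\mathbb{I})$ orthogonal to every $\pmb{\alpha}_{s}$; since $\pmb{\alpha}_{s}$ restricted to $U$ equals $\pmb{e}_{s}$ and $\pmb{r}_{|I_{3}}=\pmb{0}$, the inner product $\pmb{\alpha}_{s}^{T}\pmb{r}$ collapses to $\pmb{r}(s)$, forcing $\pmb{r}(s)=0$ for every $s\in U$. Hence $\pmb{r}=\pmb{0}$, and vanishing of $\pmb{r}_{|I_{1}}$ together with $\pmb{x}_{k}(i)\neq 0$ delivers $\pmb{y}(i)=\rho\,\pmb{y}(k)$, i.e.\ $M\pmb{y}=\pmb{0}$; therefore $L_{\mathcal{E}}^{\rho}\pmb{y}=L_{\mathcal{G}}^{\rho}\pmb{y}-M\pmb{y}=\mu\pmb{y}$, which closes (a) and the proof.
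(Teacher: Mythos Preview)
Your overall architecture is sound and in fact a cleaner repackaging of the paper's argument: the paper proves the same inclusion $E_{\mathcal{G}}\subseteq E_{\mathcal{E}}$ via the star--set basis, then establishes the two inequalities by exhibiting explicit independent families of eigenvectors, whereas your map $\Phi$ and rank--nullity replace that bookkeeping in one stroke. Part~(b) is correct as written.

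Part~(a), however, has a genuine gap stemming from a misreading of the block structure. The matrix $\mathcal{A}$ has rows indexed by \emph{all} of $I=I_{1}\cup I_{2}\cup I_{3}$, not just $I_{2}$ (note that in the display $\mathcal{A}^{T}$ occupies the full $J_{2}\times I$ strip). Consequently $\mathcal{G}_{I_{3},J_{2}}$ need not vanish, so your claim $\pmb{r}_{|I_{3}}=\pmb{0}$ fails, and your formula for $\pmb{r}_{|I_{1k}}$ is missing the term $\rho(\mathcal{A}\pmb{y}_{|J_{2}})(i)$. A telltale sign is that the hypothesis $\mathcal{A}^{T}\pmb{\alpha}=\pmb{0}$ is never invoked in your step~(a); without it the conclusion is simply false.

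The fix is immediate and stays within your framework. From the $I$--rows of $L_{\mathcal{G}}^{\rho}\pmb{y}=\mu\pmb{y}$ one gets
\[
-\pmb{r}=(\mu\mathbb{I}-L_{\mathcal{H}}^{\rho})\pmb{y}_{|I}=\pmb{v}-\rho\,\mathcal{A}\,\pmb{y}_{|J_{2}},\qquad
\pmb{v}:=\begin{pmatrix}D(X)\pmb{y}_{|I_{1}}-\rho X\pmb{y}_{|J_{1}}\\ \pmb{0}\\ \pmb{0}\end{pmatrix},
\]
with $\pmb{v}$ supported on $I_{1}$. Pairing with $\pmb{\alpha}_{s}$ and using symmetry and $\mathcal{A}^{T}\pmb{\alpha}_{s}=\pmb{0}$ gives $0=\pmb{\alpha}_{s}^{T}\pmb{v}$; since $\pmb{\alpha}_{s}|_{I_{1}\cup I_{2}}=\pmb{e}_{s}$ and $\pmb{v}$ vanishes outside $I_{1}$, this yields $\pmb{v}(s)=0$ for every $s\in I_{1}$, i.e.\ $\pmb{x}_{k}(s)\bigl(\pmb{y}(s)-\rho\pmb{y}(k)\bigr)=0$. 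You do \emph{not} obtain $\pmb{r}=\pmb{0}$ (and indeed $\pmb{y}_{|I}$ need not be a $\mu$--eigenvector of $L_{\mathcal{H}}^{\rho}$), but you never needed that: $\pmb{v}_{|I_{1}}=\pmb{0}$ is exactly $M\pmb{y}=\pmb{0}$, which is what closes~(a). This is precisely how the paper handles it (see the line $-\rho E\mathcal{A}\pmb{\beta}_{|J_{2}}=0$ in the proof).
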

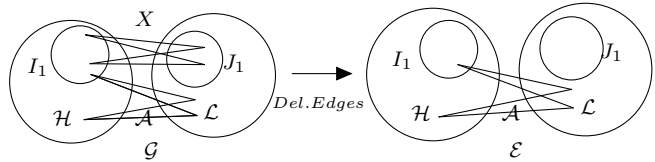
\begin{figure}[H]
\footnotesize
\centering
\begin{tikzpicture}[scale=0.6,line cap=round,line join=round,>=triangle 45,x=1.0cm,y=1.0cm]
\draw(-2.54,3.36) circle (1.36cm);
\draw(0.62,3.5) circle (1.37cm);
\draw(-2.34,3.96) circle (0.64cm);
\draw(0.2,3.86) circle (0.62cm);
\draw (0.42,4.12)-- (-2.22,4.4)-- (0.42,3.74)-- (-2.12,3.76);
\draw (0.42,4.12)-- (-2.12,3.76);
\draw (-3.12,2.9) node[anchor=north west] {$\mathcal{H}$};
\draw (-3.68,4.05) node[anchor=north west] {$I_1$};
\draw (0.64,4.14) node[anchor=north west] {$J_1$};
\draw (-1.3,5.1) node[anchor=north west] {$X$};
\draw (0.22,3.03) node[anchor=north west] {$\mathcal{L}$};
\draw [->] (2.36,3.54) -- (3.7,3.54);
\draw (-1.1,2.18) node[anchor=north west] {$\mathcal{G}$};
\draw(5.48,3.52) circle (1.47cm);
\draw(8.86,3.63) circle (1.47cm);
\draw(5.83,4.08) circle (0.64cm);
\draw(8.55,4.1) circle (0.7cm);
\draw (4.39,4.21) node[anchor=north west] {$I_1$};
\draw (4.83,3.0) node[anchor=north west] {$\mathcal{H}$};
\draw (8.57,3.14) node[anchor=north west] {$\mathcal{L}$};
\draw (9.0,4.39) node[anchor=north west] {$J_1$};
\draw (7.0,2.18) node[anchor=north west] {$\mathcal{E}$};
\draw (1.74,3.21) node[anchor=north west] {{\tiny $Del. Edges$}};
\draw (0.21,2.96)-- (-2.1,3.52);
\draw (-2.1,3.52)-- (0.25,2.61);
\draw (0.25,2.61)-- (-2.25,2.52);
\draw (-2.25,2.52)-- (0.25,2.61);
\draw (-2.1,3.52)-- (0.25,2.61);
\draw (0.21,2.96)-- (-2.25,2.52);
\draw (-1.3,2.9) node[anchor=north west] {$\mathcal{A}$};
\draw (5.61,2.58)-- (8.59,2.78);
\draw (8.59,2.78)-- (6.03,3.74);
\draw (6.03,3.74)-- (8.55,3.19);
\draw (8.55,3.19)-- (5.61,2.58);
\draw (6.83,3.0) node[anchor=north west] {$\mathcal{A}$};
\end{tikzpicture}
\caption{The graphs  of Theorem \ref{symmetricII}.}
\end{figure}

In Theorem \ref{symmetricII}, by putting $\mathcal{A}=0$, we conclude the following corollary for the (signless) Laplacian matrix of simple graphs:
\begin{corollary}\label{lap1}
Let $\mu\in \mathbb{R}$, $\rho\in \{-1,1\}$, and $H$ be a graph and $\{u_{1},\ldots,u_{t}\}$ be a subset of a $\mu$-star set of $L_{H}^{\rho}$. If $L$ is  an arbitrary graph disjoint from $H$,  and $G$ is the graph formed  by joining the vertex $u_{i}$ to an arbitrary vertex $v_{i}$ of  $L$ (not necessarily disjoint), $i\in [t]$, then $m_{L_{G}^{\rho}}(\mu)=m_{L_{L}^{\rho}}(\mu)+m_{L_{H}^{\rho}}(\mu)-t$.
\begin{figure}[H]
\centering
\begin{tikzpicture}[scale=0.8,line cap=round,line join=round,>=triangle 45,x=1.0cm,y=1.0cm]
\draw(-2.36,3.16) circle (1.2cm);
\draw(0.76,3.16) circle (1.2cm);
\draw (0.32,3.88)-- (-1.92,3.7);
\draw (0.4,2.74)-- (-1.82,2.94);
\draw (-2.9,3.38) node[anchor=north west] {$L$};
\draw (1.14,3.44) node[anchor=north west] {$H$};
\draw (-0.94,2.3) node[anchor=north west] {$G$};
\begin{scriptsize}
\draw [fill=black] (-1.92,3.7) circle (1.5pt);
\draw[] (-1.94,4.0) node {$v_1$};
\draw[] (-1.74,3.44) node {$\vdots$};
\draw [fill=black] (0.32,3.88) circle (1.5pt);
\draw[] (0.5,4.12) node {$u_1$};
\draw[] (0.2,3.42) node {$\vdots$};
\draw [fill=black] (0.4,2.74) circle (1.5pt);
\draw[] (0.58,2.98) node {$u_t$};
\end{scriptsize}
\end{tikzpicture}
\caption{Graph $G$ of Corollary \ref{lap1}.}
\end{figure}
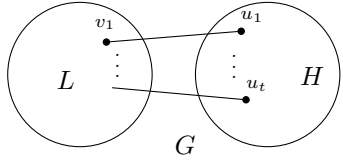
\end{corollary}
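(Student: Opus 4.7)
The plan is to derive Corollary~\ref{lap1} as a direct instance of Theorem~\ref{symmetricII} with $\mathcal{A}=0$, so the main work is just unpacking the general statement in this simple-graph setting and checking that the hypotheses hold. First I would extend $\{u_{1},\ldots,u_{t}\}$ to a full $\mu$-star set $S$ of $L_{H}^{\rho}$, then partition $V(H)=I_{1}\dot{\cup}I_{2}\dot{\cup}I_{3}$ with $I_{1}=\{u_{1},\ldots,u_{t}\}$, $I_{2}=S\setminus I_{1}$, and $I_{3}=V(H)\setminus S$. On the $L$-side I would set $J_{1}=\{v_{1},\ldots,v_{t}\}$ (as a set, possibly with fewer than $t$ elements since the $v_{i}$ may repeat) and $J_{2}=V(L)\setminus J_{1}$. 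Because the $v_{i}$ can coincide, for each $v\in J_{1}$ I would group the attached vertices into $I_{1v}:=\{u_{i}:v_{i}=v\}$, so that $I_{1}=\bigsqcup_{v\in J_{1}}I_{1v}$.

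Next I would let $\mathcal{H}$ and $\mathcal{L}$ be the adjacency matrices of $H$ and $L$ (so $L_{\mathcal{H}}^{\rho}=L_{H}^{\rho}$ and $L_{\mathcal{L}}^{\rho}=L_{L}^{\rho}$), take $\mathcal{A}=0$, and define $X$ so that its column indexed by $v\in J_{1}$ equals the all-ones vector $\pmb{j}_{|I_{1v}|}$ on the rows $I_{1v}$ and is zero elsewhere. With these choices the matrix $\mathcal{G}$ of Theorem~\ref{symmetricII} is exactly the adjacency matrix of $G$ (the $X$-block encodes precisely the edges $u_{i}v_{i}$), while $\mathcal{E}$ is the adjacency matrix of the disjoint union $H\,\dot{\cup}\,L$. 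The hypotheses are immediate: $I_{1}\dot{\cup}I_{2}=S$ is a $\mu$-star set of $L_{\mathcal{H}}^{\rho}$ by construction, each $\pmb{x}_{v}=\pmb{j}_{|I_{1v}|}$ is nowhere zero, and the two conditions $D(\mathcal{A})=0$ and $\mathcal{A}^{T}\pmb{\alpha}=\pmb{0}$ hold trivially for $\mathcal{A}=0$.

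Applying Theorem~\ref{symmetricII} then gives $m_{L_{\mathcal{E}}^{\rho}}(\mu)=m_{L_{\mathcal{G}}^{\rho}}(\mu)+|I_{1}|$, i.e.\ $m_{L_{H\dot{\cup}L}^{\rho}}(\mu)=m_{L_{G}^{\rho}}(\mu)+t$. Since the generalized Laplacian of a disjoint union is block-diagonal, $m_{L_{H\dot{\cup}L}^{\rho}}(\mu)=m_{L_{H}^{\rho}}(\mu)+m_{L_{L}^{\rho}}(\mu)$, and rearranging yields the stated identity. The one point that needs a moment's care is the case where the $v_{i}$ are not distinct: the block structure of $X$ permitted by Theorem~\ref{symmetricII} is exactly what allows several $u_{i}$'s to share a common neighbour $v\in J_{1}$, and the nowhere-zero requirement is still met because all edges have weight $1$ so each $\pmb{x}_{v}$ is a vector of ones. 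No further obstacle is expected.
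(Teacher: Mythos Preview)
Your proposal is correct and follows exactly the paper's approach: the paper derives Corollary~\ref{lap1} simply by taking $\mathcal{A}=0$ in Theorem~\ref{symmetricII}, and your write-up supplies precisely the details of that specialization, including the careful handling of possibly repeated $v_{i}$ via the block structure of $X$. There is nothing to add.
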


As a particular case of Corollary \ref{lap1} ($t=1$), we have this theorem of {\rm \cite{Ne}}:
\begin{corollary}{\rm \cite{Ne}}
Let  $G$  be  any graph  with a simple  Laplacian eigenvalue $\mu$.  Let  $u$ be 
a vertex of $G$  such that an eigenvector corresponding to $\mu$  is nonzero on $u$. 
Let $H$ be  any graph,  and  let $G^{\prime}$ be the graph formed  by joining  an arbitrary 
vertex of  $H$ to $u$.  Then $m_{L(H)}(\mu)=m_{L(G^{\prime})}(\mu)$.
\end{corollary}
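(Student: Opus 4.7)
The plan is to derive this corollary directly from Corollary~\ref{lap1}, with the roles of the two graphs swapped. Specifically, play the graph denoted $H$ in Corollary~\ref{lap1} with our $G$ (the one carrying the simple eigenvalue), play the graph $L$ there with our $H$, take $t=1$ and $\rho=1$, and set the attachment vertices to be $u_1 = u$ and $v_1$ the prescribed vertex of $H$. In these variables the combined graph of Corollary~\ref{lap1} coincides with our $G'$, and its conclusion reads
\[
m_{L(G')}(\mu) \;=\; m_{L(H)}(\mu) + m_{L(G)}(\mu) - 1 \;=\; m_{L(H)}(\mu),
\]
where the last equality uses the simplicity hypothesis $m_{L(G)}(\mu) = 1$.

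The only hypothesis of Corollary~\ref{lap1} that requires checking is that $\{u\}$ is contained in (hence equal to) a $\mu$-star set of $L(G)$. Since $m_{L(G)}(\mu) = 1$, every $\mu$-star set has size one, so I need to show that $\mu$ is not an eigenvalue of the principal submatrix $B$ of $L(G)$ obtained by deleting the row and column indexed by $u$. I would verify this with a short extend-by-zero argument. Let $\pmb{\alpha}$ be a $\mu$-eigenvector of $L(G)$ with $\pmb{\alpha}(u)\neq 0$, and suppose for contradiction that $B\pmb{\beta} = \mu\pmb{\beta}$ for some nonzero $\pmb{\beta}$. Extending $\pmb{\beta}$ by $0$ at $u$ to a vector $\tilde{\pmb{\beta}}$ on $V(G)$, a direct computation shows $L(G)\tilde{\pmb{\beta}} - \mu\tilde{\pmb{\beta}} = c\,\pmb{e}_u$ for some scalar $c$. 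If $c=0$, then $\tilde{\pmb{\beta}}$ is a nonzero $\mu$-eigenvector vanishing at $u$, linearly independent from $\pmb{\alpha}$, contradicting simplicity. If $c\neq 0$, pairing the identity $L(G)\tilde{\pmb{\beta}} = \mu\tilde{\pmb{\beta}} + c\,\pmb{e}_u$ with $\pmb{\alpha}$ and using symmetry of $L(G)$ forces $c\,\pmb{\alpha}(u) = 0$, again a contradiction. Hence $\{u\}$ is a $\mu$-star set of $L(G)$.

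With that observation in place, Corollary~\ref{lap1} applies directly and the displayed equation gives the claimed identity. The star-set verification is the main (and essentially the only) non-mechanical step; beyond it, the proof is pure bookkeeping on top of the already-established general reduction.
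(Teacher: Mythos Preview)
Your proof is correct and essentially identical to the paper's: both reduce to Corollary~\ref{lap1} with $t=1$ once $\{u\}$ is shown to be a $\mu$-star set, and both verify the star-set property by the same extend-by-zero/symmetry argument (your scalar $c$ is precisely the paper's $\pmb{x}^{T}\pmb{\beta}$, and the two case splits coincide). The only cosmetic difference is that you pair $L(G)\tilde{\pmb{\beta}}=\mu\tilde{\pmb{\beta}}+c\,\pmb{e}_u$ against $\pmb{\alpha}$, whereas the paper pairs the restricted eigenvector equation for $\pmb{\alpha}$ against $\pmb{\beta}$---these are dual forms of the same symmetric-matrix identity.
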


\begin{proof}
Assume that 
$L_{G}^{\rho}=\begin{blockarray}{cc|c}
\begin{block}{c(c|c)}
\text{\footnotesize $u$ } & a  & \pmb{x}^{T}   \\\cline{1-3}
\text{\tiny $V(G)-\{u\}$ }&\pmb{x}  &  M  \\
\end{block}
\end{blockarray}
$
 and $\pmb{\alpha}$ is a $\mu$-eigenvector of  $L_{G}^{\rho}$ such that $\pmb{\alpha}(u)\neq 0$. It is sufficient to show that $\{u\}$ is  a $\mu$-star set of $L_{G}^{\rho}$. On the other hand, we show $m_{M}(\mu)=0$. Suppose, by contradiction, $M$ has a $\mu$-eigenvector $\pmb{\beta}$. If $\pmb{x}^{T}\pmb{\beta}=0$, then $\pmb{y}$ is a $\mu$-eigenvector of  $L_{G}^{\rho}$, where $\pmb{y}(v)=
\begin{cases}
\pmb{\beta}(v)& v\neq u,\\
0 & v=u
\end{cases}$.
Since $\pmb{\alpha}(u)\neq 0$, the vectors $\pmb{\alpha}$ and $\pmb{y}$ are independent and we have a contradiction with $m_{L_{G}^{\rho}}(\mu)=1$. If $\pmb{x}^{T}\pmb{\beta}\neq 0$, then
\[
0=(\mu\mathbb{I}-L_{G}^{\rho})\pmb{\alpha}\Rightarrow \pmb{\alpha}(u)\,\pmb{x}=(\mu\mathbb{I}-M)\pmb{\alpha}_{|\text{\tiny $V(G)-\{u\}$ }}\Rightarrow
\pmb{\alpha}(u)\pmb{\beta}^{T}\pmb{x}=\pmb{\beta}^{T}(\mu\mathbb{I}-M)\pmb{\alpha}_{|\text{\tiny $V(G)-\{u\}$ }}=0 \xRightarrow{\pmb{\beta}^{T}\pmb{x}\neq 0}
\pmb{\alpha}(u)=0,
\]
 and we have a contradiction. This completes the proof.
\end{proof}

\begin{remark}
By Corollary \ref{lap1}, since $m_{L(P_3)}(1)=1$ and the value of a $1$-eigenvector is nonzero on every pendant vertex of $P_3$,     we have Theorem \ref{delp3}. Also,  $m_{L(S_k)}(k)=1$ and every $k$-eigenvector of $S_k$ is nowhere-zero, hence we have Theorem \ref{delsn}.
\end{remark}

\subsection{Edge Switching}

In the following theorem, for a given eigenvalue $\mu$, a particular subgraph, and  given weights of the edges, we delete some edges and switch some weights from a section of graph to another section and give the relation between the multiplicity of $\mu$ for two graphs.

\begin{theorem}\label{switching}
Let $\mu\in \mathbb{R},\,\rho\in \mathbb{R}-\{0\}$ and $\mathcal{H},\mathcal{L}$ be real symmetric matrices  with row and column indices $I=I_{1}\dot{\cup} I_{2}\dot{\cup} I_{3}$ and $J=J_{1}\dot{\cup} J_{2}$, respectively. Suppose that $S\in \text{\rm Sym}_{|I_{1}|}(\mathbb{R})$ and $I_{1}\dot{\cup} I_{2}$ is a $\mu$-star set of $L_{\mathcal{H}}^{\rho}$. If  $\widehat{\mathcal{H}},\widehat{\mathcal{L}},\mathcal{G},$ and $\mathcal{E}$ are symmetric matrices given below,
\begin{equation*}
\resizebox{ 0.8\textwidth}{!} 
{$
\newcommand*{\temp}{\BAmulticolumn{1}{|c}{}}
\newcommand*{\tempa}{\BAmulticolumn{1}{|c}{\mathcal{A}}}
\newcommand*{\tempj}{\BAmulticolumn{1}{|c}{J_{2}}}
\widehat{\mathcal{H}}=\mathcal{H}+
\begin{blockarray}{c|c}
  I_{1} & I\setminus I_{1}\\
 \begin{block}{(c|c)}
   S & 0  \\ \cline{1-2}
   0 & 0 \\
 \end{block}
\end{blockarray}\,
,\,\,
\widehat{\mathcal{L}}
=\mathcal{L}-
\begin{blockarray}{c|c}
 J_{1} &J_{2}\\
 \begin{block}{(c|c)}
   S' &  0   \\\cline{1-2}
    0  & 0 \\
 \end{block}
\end{blockarray}\,,\,\,
\mathcal{G}=
\begin{blockarray}{cccc|cc}
 & I_{1} & I_{2} & I_{3}  & J_{1}  &\tempj \\
\begin{block}{c(ccc|cc)}
  I_{1} & & & &  X  &\temp  \\
  I_{2} & & \widehat{\mathcal{H}}& &  0&\tempa\\
  I_{3} &  & & & 0 &\temp  \\\cline{1-6}
  J_{1} & X^{T} & 0 & 0 &    &  \\\cline{1-4}
  J_{2} & & \mathcal{A}^{T} &  &     \mathcal{L} & \\
\end{block}
\end{blockarray}
\,,\,\,
\mathcal{E}=
\begin{blockarray}{cccc|cc}
 & I_{1} & I_{2} & I_{3}  & J_{1}  &\tempj \\
\begin{block}{c(ccc|cc)}
  I_{1} & & & &  0  &\temp  \\
  I_{2} & & \mathcal{H}& &  0&\tempa\\
  I_{3} &  & & & 0 &\temp  \\\cline{1-6}
  J_{1} & 0 & 0 & 0 &    &  \\\cline{1-4}
  J_{2} & & \mathcal{A}^{T} &  &     \widehat{\mathcal{L}} & \\
\end{block}
\end{blockarray}\,,$}
\end{equation*}
for some matrices $\mathcal{A}$ and $X$, where  $D(\mathcal{A})=0$, $\mathcal{A}^{T}\pmb{\alpha}=\pmb{0}$, for every $\mu$-eigenvector $\pmb{\alpha}$ of $\mathcal{H}$, and $S'$ is a solution of the equation $L_{S'}^{\rho}+D(X^{T})=\rho^{2}X^{T} (L_{S}^{\rho}+D(X))^{-1}X$,
then
$m_{L_{\mathcal{E}}^{\rho}}(\mu)=m_{L_{\mathcal{G}}^{\rho}}(\mu)+|I_{1}|.$

In particular, if $L_{S}^{\rho}$ is invertible, then $m_{L_{\widehat{\mathcal{H}}}^{\rho}}(\mu)=m_{L_{\mathcal{H}}^{\rho}}(\mu)-|I_{1}|$.
\end{theorem}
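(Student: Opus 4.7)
The plan is to characterize both $\mu$-eigenspaces $V_\mathcal{G} := \ker(L_\mathcal{G}^\rho - \mu\mathbb{I})$ and $V_\mathcal{E} := \ker(L_\mathcal{E}^\rho - \mu\mathbb{I})$ blockwise, establish the inclusion $V_\mathcal{G}\subseteq V_\mathcal{E}$, and exhibit an explicit surjective linear map $\Psi : V_\mathcal{E} \to \mathbb{R}^{|I_1|}$ with $\ker \Psi = V_\mathcal{G}$; the identity $\dim V_\mathcal{E} - \dim V_\mathcal{G} = |I_1|$ will then follow from the rank--nullity theorem.

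The first step I would carry out uses the star set hypothesis to show that for any $\pmb{v}\in V_\mathcal{G}$ the restriction $\pmb{v}|_I$ is in fact a $\mu$-eigenvector of $L_\mathcal{H}^\rho$ (and similarly for $V_\mathcal{E}$). Writing out the $I_3$-block of the eigenvector equation and using $D(\mathcal{A})=0$ together with $\mathcal{A}^T\pmb{\alpha}=\pmb{0}$ to discard the $\mathcal{A}$-contributions, I obtain $(L_\mathcal{H}^\rho - \mu\mathbb{I})_{I_3, I}\,\pmb{v}|_I = \pmb{0}$. Since $I_1\dot\cup I_2$ is a $\mu$-star set, $(L_\mathcal{H}^\rho - \mu\mathbb{I})_{I_3, I_3}$ is invertible, so $\pmb{v}|_{I_3}$ is forced to equal the $I_3$-part of the unique $\mu$-eigenvector of $L_\mathcal{H}^\rho$ extending $\pmb{v}|_{I_1\cup I_2}$ via the star basis $\{\pmb{\alpha}_s\}$ of Theorem \ref{starcorr}; concretely $\pmb{v}|_I = \sum_{s\in I_1\cup I_2} \pmb{v}(s)\pmb{\alpha}_s$, which is a $\mu$-eigenvector of $L_\mathcal{H}^\rho$. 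Substituting this back into the $I_1$- and $I_2$-blocks, the $I_2$-block is then automatic, while the $I_1$-block of $L_\mathcal{G}^\rho\pmb{v}=\mu\pmb{v}$ reduces to the coupling relation $(L_S^\rho + D(X))\,\pmb{v}|_{I_1} = \rho X\,\pmb{v}|_{J_1}$, whose counterpart is absent in the $L_\mathcal{E}^\rho$-equations.

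Since $L_S^\rho + D(X)$ is invertible (as needed for the defining equation of $S'$ to make sense), I would solve the coupling for $\pmb{v}|_{I_1}$ in terms of $\pmb{v}|_{J_1}$, substitute into the $J_1$-block of $L_\mathcal{G}^\rho\pmb{v}=\mu\pmb{v}$, and invoke the identity $L_{S'}^\rho + D(X^T) = \rho^2 X^T(L_S^\rho + D(X))^{-1}X$. After cancellation the $J_1$-block collapses exactly onto the $J_1$-block of $L_\mathcal{E}^\rho\pmb{w}=\mu\pmb{w}$, while the $J_2$-blocks of the two equations coincide automatically. This proves $V_\mathcal{G}\subseteq V_\mathcal{E}$ and identifies $V_\mathcal{G}$ as the kernel of $\Psi(\pmb{w}) := \pmb{w}|_{I_1} - \rho(L_S^\rho + D(X))^{-1}X\,\pmb{w}|_{J_1}$. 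To prove $\Psi$ is surjective I would test each $\pmb{c}\in\mathbb{R}^{|I_1|}$ with the explicit preimage $\pmb{w}$ given by $\pmb{w}|_I = \sum_{s\in I_1} \pmb{c}(s)\pmb{\alpha}_s$ and $\pmb{w}|_J = \pmb{0}$: since $\pmb{w}|_{I_2}=\pmb{w}|_J=\pmb{0}$, every block of $L_\mathcal{E}^\rho\pmb{w}=\mu\pmb{w}$ is routine, and $\Psi(\pmb{w})=\pmb{c}$ by construction. The particular case is the specialization $X=0$, $\mathcal{A}=0$, $J_1=\emptyset$, in which $S'=0$ and $\widehat{\mathcal{L}}=\mathcal{L}$, degenerating the main identity to the direct-sum claim $m_{L_{\widehat{\mathcal{H}}}^\rho}(\mu)=m_{L_\mathcal{H}^\rho}(\mu)-|I_1|$. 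The most delicate point will be the bookkeeping of the diagonal contributions $D(X), D(X^T), D(\mathcal{A})$ alongside the added/subtracted $L_S^\rho, L_{S'}^\rho$, so that the $S'$-equation intervenes at exactly the right step to cancel the $D(X^T)$ term in the $J_1$-block and leave behind precisely $-L_{S'}^\rho\,\pmb{v}|_{J_1}$.
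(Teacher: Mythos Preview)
Your rank--nullity framework via the map $\Psi(\pmb{w})=\pmb{w}|_{I_1}-\rho(L_S^\rho+D(X))^{-1}X\,\pmb{w}|_{J_1}$ is a clean way to organize the argument, and both the surjectivity witness (the extended star-basis vectors $\widehat{\pmb{\alpha}^s}$) and the identification $\ker\Psi|_{V_\mathcal{E}}=V_\mathcal{G}$ are correct once the coupling relation $(L_S^\rho+D(X))\pmb{v}|_{I_1}=\rho X\pmb{v}|_{J_1}$ is in hand. The problem is how you derive that relation.

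Your claim that $\pmb{v}|_I$ is a $\mu$-eigenvector of $L_{\mathcal{H}}^\rho$ is false in general, and the argument for it misuses the hypothesis on $\mathcal{A}$. The $I_3$-row of $(L_{\mathcal{G}}^\rho-\mu\mathbb{I})\pmb{v}=\pmb{0}$ reads
\[
\bigl((L_{\mathcal{H}}^\rho-\mu\mathbb{I})\pmb{v}|_I\bigr)_{I_3}=\rho\,\mathcal{A}[I_3,J_2]\,\pmb{v}|_{J_2},
\]
and there is no reason for the right side to vanish: the hypothesis $\mathcal{A}^T\pmb{\alpha}=\pmb{0}$ constrains $\mathcal{A}^T$ applied to $\mu$-eigenvectors of $L_{\mathcal{H}}^\rho$, not $\mathcal{A}$ applied to $\pmb{v}|_{J_2}$, and $D(\mathcal{A})=0$ only removes the diagonal contribution. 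So you cannot conclude $\pmb{v}|_I\in\ker(L_{\mathcal{H}}^\rho-\mu\mathbb{I})$, and the subsequent ``substituting back into the $I_1$-block'' step, which relies on $(L_{\mathcal{H}}^\rho-\mu\mathbb{I})\pmb{v}|_I=\pmb{0}$, collapses.

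The fix is exactly what the paper does: write the full $I$-block equation as
\[
(\mu\mathbb{I}-L_{\mathcal{H}}^\rho)\pmb{v}|_I=\begin{pmatrix}(L_S^\rho+D(X))\pmb{v}|_{I_1}\\ \pmb{0}\\ \pmb{0}\end{pmatrix}-\rho\begin{pmatrix}X\pmb{v}|_{J_1}\\ \pmb{0}\\ \pmb{0}\end{pmatrix}-\rho\,\mathcal{A}\,\pmb{v}|_{J_2}
\]
and multiply on the left by the $k\times|I|$ matrix $E$ whose rows are the star-basis eigenvectors $\pmb{\alpha}^1,\ldots,\pmb{\alpha}^k$. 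The left side vanishes because these are $\mu$-eigenvectors; the term $E\mathcal{A}$ vanishes because $(\pmb{\alpha}^i)^T\mathcal{A}=(\mathcal{A}^T\pmb{\alpha}^i)^T=\pmb{0}$; and since $E[\,\cdot\,,I_1\cup I_2]=\mathbb{I}_k$ one reads off the coupling relation from the first $|I_1|$ coordinates. With this correction in place, the rest of your $\Psi$-argument goes through and is essentially a streamlined repackaging of the paper's two-inequality proof.
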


For any $\rho\in\mathbb{R}\setminus\{0,1\}$ and $L\in \text{\rm Sym}_{n}(\mathbb{R})$, it is easy to see that the equation $L=L_{M}^{\rho}$ has a unique solution $M\in \text{\rm Sym}_{n}(\mathbb{R})$. Thus, for given $S$ ans $X$ such that there exists $(L_{S}^{\rho}+D(X))^{-1}$,  the equation {\small $L_{S'}^{\rho}+D(X^{T})=\rho^{2}X^{T} (L_{S}^{\rho}+D(X))^{-1}X$} has a solution for $S'$.

\begin{corollary}[Edge Switching]\label{switch}
Let $\rho^{2}=1$ and $L,H$ be two disjoint graphs. With the notations of  Theorem \ref{switching}, put  $\mathcal{H}=A(H)$, $\mathcal{L}=A(L)$, $\mathcal{A}=0$, and Consider the following two cases:
\begin{enumerate}
\item
$X=\mathbb{I}_{|I_{1}|}$ and $-S$ is a permutation matrix corresponding to an involution,
\item
for a given $S$, suppose that $X$ is a solution of $X=L_{S}^{\rho}+D(X)$. 
\end{enumerate}
Then, for both cases, $S'=S$ is a solution and $m_{L_{G}^{\rho}}(\mu)+|I_{1}|=m_{L_{H}^{\rho}}(\mu)+m_{L_{\widehat{L}}^{\rho}}(\mu)$.

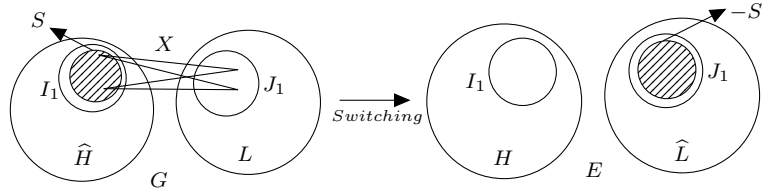
\begin{figure}[H]
\footnotesize
\centering
\begin{tikzpicture}[scale=0.7,line cap=round,line join=round,>=triangle 45,x=1.0cm,y=1.0cm]
\draw(-2.54,3.36) circle (1.36cm);
\draw(0.62,3.5) circle (1.37cm);
\draw(-2.34,3.96) circle (0.64cm);
\draw(0.2,3.86) circle (0.62cm);
\draw (0.42,4.12)-- (-2.22,4.4)-- (0.42,3.74)-- (-2.12,3.76);
\draw (0.42,4.12)-- (-2.12,3.76);
\draw [fill=black,pattern=north east lines,pattern color=black] (-2.28,4.) circle (0.49cm);
\draw [->] (-2.34,4.49) -- (-3.16,4.92);
\draw (-3.67,5.36) node[anchor=north west] {$S$};
\draw (-2.86,2.86) node[anchor=north west] {$\widehat{H}$};
\draw (-3.5,4.05) node[anchor=north west] {$I_1$};
\draw (0.74,4.14) node[anchor=north west] {$J_1$};
\draw (-1.32,4.90) node[anchor=north west] {$X$};
\draw (0.26,2.82) node[anchor=north west] {$L$};
\draw [->] (2.36,3.54) -- (3.7,3.54);
\draw (-1.4,2.32) node[anchor=north west] {$G$};
\draw(5.48,3.52) circle (1.47cm);
\draw(8.86,3.63) circle (1.47cm);
\draw(5.83,4.08) circle (0.64cm);
\draw(8.55,4.1) circle (0.7cm);
\draw [fill=black,pattern=north east lines,pattern color=black] (8.57,4.12) circle (0.55cm);
\draw [->] (8.5,4.67) -- (9.7,5.28);
\draw (4.59,4.21) node[anchor=north west] {$I_1$};
\draw (5.11,2.75) node[anchor=north west] {$H$};
\draw (8.55,2.94) node[anchor=north west] {$\widehat{L}$};
\draw (9.61,5.54) node[anchor=north west] {$-S$};
\draw (9.17,4.39) node[anchor=north west] {$J_1$};
\draw (6.86,2.52) node[anchor=north west] {$E$};
\draw (2.04,3.51) node[anchor=north west] {{\tiny $Switching$}};
\end{tikzpicture}
\caption{A schematic diagram of graphs $G$ and $E$ of Corollary \ref{switch}.}
\end{figure}
\end{corollary}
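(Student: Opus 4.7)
The plan is to reduce directly to Theorem \ref{switching}: it suffices to check that $S' = S$ solves the matrix equation $L_{S'}^{\rho} + D(X^{T}) = \rho^{2} X^{T} (L_{S}^{\rho} + D(X))^{-1} X$ in each of the two cases, and then to observe that under the hypothesis $\mathcal{A} = 0$ the matrix $\mathcal{E}$ splits as a block diagonal matrix whose pieces are exactly $A(H)$ and $A(\widehat{L})$.

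First I would verify the identity with $S' = S$. In case (1), writing $P = -S$, the matrix $P$ is a symmetric permutation matrix satisfying $P^{2} = \mathbb{I}$ with every row sum equal to $1$; hence $D(S) = -\mathbb{I}$ and $L_{S}^{\rho} = -\mathbb{I} + \rho P$. Combined with $D(X) = \mathbb{I}$, this gives $L_{S}^{\rho} + D(X) = \rho P$, which is invertible with inverse $\rho P$ (using $\rho^{-1} = \rho$ from $\rho^{2} = 1$, and $P^{-1} = P$). A short substitution then shows both sides equal $\rho P$. In case (2), the defining relation $X = L_{S}^{\rho} + D(X)$, being a sum of symmetric matrices, forces $X$ to be symmetric (and is implicitly invertible, so that $(L_{S}^{\rho} + D(X))^{-1} = X^{-1}$ makes sense). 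Then $D(X) = D(X^{T})$, so the left-hand side equals $L_{S}^{\rho} + D(X) = X$ while the right-hand side equals $\rho^{2} X^{T} X^{-1} X = X^{T} = X$.

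Once the equation is checked, Theorem \ref{switching} immediately yields $m_{L_{\mathcal{E}}^{\rho}}(\mu) = m_{L_{\mathcal{G}}^{\rho}}(\mu) + |I_{1}|$. Since $\mathcal{A} = 0$ and the $I_{1} \times J_{1}$ block of $\mathcal{E}$ is also zero, the off-diagonal $I \times J$ block of $\mathcal{E}$ vanishes, so $\mathcal{E}$ is block diagonal with blocks $\mathcal{H} = A(H)$ and $\widehat{\mathcal{L}} = A(\widehat{L})$. Because $D$ respects this block structure, $L_{\mathcal{E}}^{\rho}$ decomposes as $L_{H}^{\rho} \oplus L_{\widehat{L}}^{\rho}$, and hence $m_{L_{\mathcal{E}}^{\rho}}(\mu) = m_{L_{H}^{\rho}}(\mu) + m_{L_{\widehat{L}}^{\rho}}(\mu)$. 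Substituting into the previous identity gives the desired formula.

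The main obstacle I anticipate is the calculation in case (1), where one has to correctly balance the three algebraic ingredients $D(-P) = -\mathbb{I}$, $P^{-1} = P$, and $\rho^{-1} = \rho$, and check that the factors of $\rho$ and $P$ telescope to the same matrix on both sides. Case (2) is essentially tautological once the symmetry of $X$ is noticed, and the reduction from $\mathcal{E}$ to $H \sqcup \widehat{L}$ is routine.
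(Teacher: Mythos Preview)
Your proposal is correct and is exactly the intended route: the paper states this result as an immediate corollary of Theorem \ref{switching} without supplying a separate proof, so the only content is the verification that $S'=S$ solves the defining equation in each case and the observation that with $\mathcal{A}=0$ the matrix $\mathcal{E}$ decouples into $\mathcal{H}\oplus\widehat{\mathcal{L}}$. Your computations in both cases are accurate (in particular $L_{S}^{\rho}+D(X)=\rho P$ in case (1) and $L_{S}^{\rho}+D(X)=X$ in case (2)), and the block-diagonal reduction is routine.
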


Since for a non-bipartite graph $H$, the signless Laplacian matrix $Q(H)$ is invertible, by a particular case of Theorem \ref{switching}, if we set $S=\pm\bigl( \begin{smallmatrix}
0 & 1 \\
1 & 0
\end{smallmatrix}\bigr)$, then we can conclude the next corollary.
\begin{corollary}
Let $H$ be a non-bipartite graph with a $\mu$-star set $U$, for a signless Laplacian eigenvalue  $\mu$. If $u,v\in U$ and $uv\in E(H)$
{\rm(}$uv\notin E(H)${\rm)}, then removing {\rm(}adding, resp.{\rm)} the edge $uv$, decreases $m_{Q(H)}(\mu)$ by $2$ and $U\setminus \{u,v\}$ is a $\mu$-star set of $Q(\widehat{H})$.
\end{corollary}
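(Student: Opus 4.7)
My plan is to realize this corollary as a specialization of Theorem~\ref{switching} via the setup in the preceding remark. The role of the hypothesis that $H$ is non-bipartite is to make $Q(H)=L_{A(H)}^{-1}$ invertible, which is precisely what unlocks the ``In particular'' conclusion of Theorem~\ref{switching}.

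First I would take $\rho=-1$ and $\mathcal{H}=A(H)$ indexed by $V(H)=I_1\dot\cup I_2\dot\cup I_3$ with $I_1=\{u,v\}$, $I_2=U\setminus\{u,v\}$, and $I_3=V(H)\setminus U$, so that $L_{\mathcal{H}}^{-1}=Q(H)$ and $I_1\dot\cup I_2=U$ is a $\mu$-star set as required by Theorem~\ref{switching}. Following the remark, I would set $S=\mp\bigl(\begin{smallmatrix} 0 & 1 \\ 1 & 0 \end{smallmatrix}\bigr)$, with the sign chosen so that $\widehat{\mathcal{H}}=\mathcal{H}+S$ (extended by zeros outside the $I_1$-block) equals $A(\widehat{H})$: minus for edge-removal, plus for edge-addition. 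Then $L_{\widehat{\mathcal{H}}}^{-1}=Q(\widehat{H})$. With these choices, the ``In particular'' form of Theorem~\ref{switching} yields $m_{Q(\widehat{H})}(\mu)=m_{Q(H)}(\mu)-|I_1|=m_{Q(H)}(\mu)-2$, which is the multiplicity half of the corollary.

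For the star-set half, I would invoke Theorem~\ref{starcorr} to obtain the Kronecker-delta basis $\{\pmb{\alpha}_s:s\in U\}$ of the $\mu$-eigenspace of $Q(H)$. For each $s\in U\setminus\{u,v\}$, the vector $\pmb{\alpha}_s$ vanishes at both $u$ and $v$, so Lemma~\ref{edgeprin2} (the Edge Principle, with $\rho=-1$ and the trivial relation $\pmb{\alpha}_s(u)=-\pmb{\alpha}_s(v)=0$) shows that $\pmb{\alpha}_s$ remains a $\mu$-eigenvector of $Q(\widehat{H})$. These $|U|-2$ vectors are linearly independent, and by the multiplicity count they span the entire $\mu$-eigenspace of $Q(\widehat{H})$. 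Since the Kronecker-delta pattern on $U\setminus\{u,v\}$ persists, the submatrix of $Q(\widehat{H})$ obtained by removing the rows and columns indexed by $U\setminus\{u,v\}$ cannot have $\mu$ as an eigenvalue (otherwise we would produce an extra $\mu$-eigenvector of $Q(\widehat{H})$ supported off $U\setminus\{u,v\}$, contradicting the dimension), so $U\setminus\{u,v\}$ is a $\mu$-star set of $Q(\widehat{H})$.

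The main obstacle I anticipate is checking that the auxiliary data $\mathcal{L},X,\mathcal{A}$ of Theorem~\ref{switching} can genuinely be chosen to yield the ``In particular'' conclusion in this setting. A direct computation shows $L_S^{-1}=\mp\bigl(\begin{smallmatrix} 1 & 1 \\ 1 & 1 \end{smallmatrix}\bigr)$ is itself singular, so it is the invertibility of $Q(H)$ (rather than that of $L_S^{-1}$) that the argument actually exploits; the Schur-complement condition $L_{S'}^{\rho}+D(X^T)=\rho^{2}X^{T}(L_{S}^{\rho}+D(X))^{-1}X$ needs to be unpacked under this substitution, presumably by choosing $X$ so that $L_S^{-1}+D(X)$ is non-singular and absorbing the degeneracy of $L_S^{-1}$ into the block on $V(H)\setminus I_1$. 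Managing this identification cleanly is the technical kernel of the argument and is exactly where the non-bipartite hypothesis on $H$ is really invoked.
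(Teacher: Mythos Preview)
Your approach is exactly the paper's: the paper's entire argument is the sentence preceding the corollary, which sets $\rho=-1$, $S=\pm\bigl(\begin{smallmatrix}0&1\\1&0\end{smallmatrix}\bigr)$, and appeals to the ``In particular'' clause of Theorem~\ref{switching}, citing that non-bipartiteness makes $Q(H)$ invertible. Your separate star-set argument via Theorem~\ref{starcorr} and Lemma~\ref{edgeprin2} goes beyond what the paper spells out and is a valid deduction \emph{conditional on the multiplicity drop being established}.

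The obstacle you flag in your last paragraph is genuine and is not resolved by the paper either. The ``In particular'' clause of Theorem~\ref{switching} requires $L_S^{\rho}$ to be invertible, but with $\rho=-1$ one computes $L_S^{-1}=D(S)+S=\pm\bigl(\begin{smallmatrix}1&1\\1&1\end{smallmatrix}\bigr)$, which is singular; the paper's invocation of ``$Q(H)$ invertible'' is a different hypothesis and does not substitute for this. In fact the corollary is false as stated: take $H=K_3$ (non-bipartite), $\mu=1$, $U=\{1,2\}$. Then $m_{Q(K_3)}(1)=2$ and $U$ is a $1$-star set (the complementary $1\times1$ block is $[2]$), yet removing the edge $\{1,2\}$ yields the path $1\text{--}3\text{--}2$ with $m_{Q(\widehat H)}(1)=1$ (eigenvector $(1,-1,0)^T$), not $0=m_{Q(K_3)}(1)-2$. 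So the gap you anticipated cannot be closed by any choice of $X$ or $\mathcal{L}$, and your instinct that the invertibility bookkeeping does not line up is correct.
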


Suppose that $G$ is an $r$-regular graph and $\lambda\in\mathbb{R}$. If $m_{A(G)}(\lambda)=k$ and $U$ is a $\lambda$-star set  of $A(G)$, then $U$ is an $(r-\rho \lambda)$-star set of $L_{G}^{\rho}$. So, we can conclude some results of Laplacian matrices similar to adjacency matrices of regular subgraphs. For example, we state the following corollary on path subgraphs for Laplacian matrices such that there is in \cite{BK} for adjacency matrices of graphs:

\begin{corollary}\label{delpn}
A path with $n$ vertices of valency $2$ in a graph $G$ can be replaced by an edge {\rm(}see Figure \ref{pathn}{\rm)} without changing the multiplicity of  Laplacian eigenvalue $4\sin^2 (\frac{k\pi}{n})$ (signless Laplacian eigenvalue $4\cos^2 (\frac{k\pi}{n})$), for $n\geq 3$ and $k\in [n-1],k\neq \frac{n}{2}$.
\begin{figure}[H]
\centering
\begin{tikzpicture}[node distance=0.7 cm, >=stealth',
	minimum size=2 pt, inner sep=1 pt, line width=0.6 pt]
\tikzstyle{init} = [pin edge={to-, thin, white}]
\tikzstyle{place}=[circle, draw ,thick,fill=black]
\tikzstyle{label}=[circle , minimum size=1 pt,thick]

\node [place] (v) at (0,0) {};
\node [label] (vn) at (0,0.3)  {$v$};
\node [label] (h1) at (-0.3,-0.4) {};
\node [label] (h2) at (0,-0.4) {};
\node [label] (h3) at (0.3,-0.4) {};

\node [place] (u1) at (0.5,0.5)  {};
\node [label] (u1n) at (0.5,0.8)  {$1$};
\node [place] (u2) at (1,0.5)  {};
\node [label] (u2n) at (1,0.8)  {$2$};
\node [place] (u3) at (1.5,0.5)  {};
\node [place] (u4) at (2,0.5)  {};
\node [label] (u4n) at (2,0.8)  {$n$};

\node [place] (u) at (2.5,0)  {};
\node [label] (un) at (2.5,0.3)  {$u$};
\node [label] (h4) at (2.2,-0.4) {};
\node [label] (h5) at (2.5,-0.4) {};
\node [label] (h6) at (2.8,-0.4) {};

\node [place] (vv) at (4,0)  {};
\node [label] (vvn) at (4,0.3)  {$v$};
\node [label] (hh1) at (3.7,-0.4) {};
\node [label] (hh2) at (4,-0.4) {};
\node [label] (hh3) at (4.3,-0.4) {};

\node [place] (uu) at (5,0)  {};
\node [label] (uun) at (5,0.3)  {$u$};
\node [label] (hh4) at (4.7,-0.4) {};
\node [label] (hh5) at (5,-0.4) {};
\node [label] (hh6) at (5.3,-0.4) {};

\draw[-] (v) -- (u1);
\draw[-] (v) -- (h1);
\draw[-] (v) -- (h2);
\draw[-] (v) -- (h3);
\draw[-] (u) -- (u4);
\draw[-] (u) -- (h4);
\draw[-] (u) -- (h5);
\draw[-] (u) -- (h6);

\draw[-] (u1) -- (u2);
\draw[dotted] (u2) -- (u3);
\draw[-] (u3) -- (u4);

\draw[-] (vv) -- (uu);
\draw[-] (vv) -- (hh1);
\draw[-] (vv) -- (hh2);
\draw[-] (vv) -- (hh3);
\draw[-] (uu) -- (hh4);
\draw[-] (uu) -- (hh5);
\draw[-] (uu) -- (hh6);

\end{tikzpicture}
\caption{The graphs of Corollary \ref{delpn}. }
\label{pathn}
\end{figure}
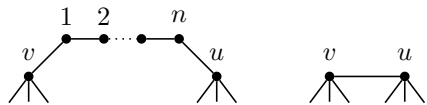
\end{corollary}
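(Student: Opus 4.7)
The plan is to apply the edge-switching Corollary \ref{switch}, Case (1), with the cycle $C_n$ playing the role of the auxiliary graph. The motivating observation is that $4\sin^{2}(k\pi/n)$ and $4\cos^{2}(k\pi/n)$ are precisely the Laplacian and signless Laplacian eigenvalues of $C_{n}$, and the excluded index $k=n/2$ is the unique one that would produce a simple eigenvalue rather than a doubly degenerate one.

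Label the path so that $P=\{1,2,\ldots,n\}$ with $v$ adjacent to $1$ and $u$ adjacent to $n$, and set $L=G-P$. I would feed the following into Corollary \ref{switch}: take $H=C_{n}$ on $P$ (i.e.\ the induced path together with the extra edge $\{1,n\}$), $I_{1}=\{1,n\}$, $I_{2}=\emptyset$, $J_{1}=\{v,u\}$, $X=\mathbb{I}_{2}$, and $S=-\bigl(\begin{smallmatrix}0&1\\1&0\end{smallmatrix}\bigr)$, so that $-S$ is an involutive permutation matrix. Unpacking the definitions shows $\mathcal{G}=A(G)$, while $\widehat{L}$ is exactly $L$ together with the edge $\{v,u\}$, namely $\widehat{L}=G'$.

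The central task, and the only real obstacle, is verifying the $\mu$-star set hypothesis, i.e.\ that $\{1,n\}$ is a $\mu$-star set of $L(C_{n})$. This splits into two checks. First, $m_{L(C_{n})}(\mu)=2$: the eigenvalues $4\sin^{2}(j\pi/n)$ of $L(C_n)$ pair up under $j\mapsto n-j$, the self-paired indices are exactly $j=0$ and $j=n/2$, and consequently every other eigenvalue has multiplicity two. Second, $\mu$ must fail to be an eigenvalue of $L(C_{n})[\{2,\ldots,n-1\}]$; this submatrix is tridiagonal with diagonal $2$ and off-diagonals $-1$, so its eigenvalues are $2-2\cos(j\pi/(n-1))$ for $j=1,\ldots,n-2$. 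Equating one of these to $\mu=2-2\cos(2k\pi/n)$ and reducing modulo $n$ (using $n-1\equiv-1\pmod{n}$) forces $n\mid 2k$, which $1\le k\le n-1$ together with $k\neq n/2$ rules out. This short divisibility argument is the main technical step, and it is exactly where the exclusion $k\neq n/2$ is used.

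Once the star-set condition is established, Corollary \ref{switch} directly yields $m_{L(G)}(\mu)+2=m_{L(C_{n})}(\mu)+m_{L(G')}(\mu)=2+m_{L(G')}(\mu)$, hence $m_{L(G)}(\mu)=m_{L(G')}(\mu)$. The signless Laplacian statement is handled by the same argument with $\rho=-1$: $Q(C_{n})=L_{A(C_{n})}^{-1}$ has eigenvalues $4\cos^{2}(j\pi/n)$ with the identical pairing, and the same divisibility computation shows $\{1,n\}$ is a $\mu$-star set of $Q(C_{n})$ whenever $k\neq n/2$.
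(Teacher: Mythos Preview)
Your proposal is correct and follows essentially the same route as the paper: apply Corollary~\ref{switch} (Case~(1)) with $H=C_n$, $I_1=\{1,n\}$, $X=\mathbb{I}_2$, and $S=-\bigl(\begin{smallmatrix}0&1\\1&0\end{smallmatrix}\bigr)$, so that $\widehat{\mathcal H}=A(P_n)$, $\mathcal G=A(G)$, and $\widehat L=G'$. The one difference is in checking that $\{1,n\}$ is a $\mu$-star set of $L_{C_n}^{\rho}$: the paper invokes the regularity transfer $L_{C_n}^{\rho}=2\mathbb I-\rho A(C_n)$ together with the adjacency star-set result from \cite{BK}, whereas you give a self-contained argument by computing the eigenvalues $2-2\rho\cos\bigl(j\pi/(n-1)\bigr)$ of the tridiagonal principal submatrix and using the divisibility $n\mid 2k$; both are valid, and yours avoids the external citation.
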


\begin{proof}
We prove the corollary for $\rho=1$, the other case is similar. The eigenvalues of $L_{C_{n}}^{1}$ are $(4{\sin^{2}(\frac{k\pi}{n})})_{k=0}^{n-1}$ {\rm(}see \cite{Sp}{\rm)}. If $\mu=4{\sin^{2}(\frac{k\pi}{n})}$ for an integer $k$, $k\in [n-1]$ and $k\neq \frac{n}{2}$, then $m_{L_{C_{n}}^{1}}(\mu)=2$. Since every set of two adjacent vertices of $C_{n}$ is a $(2-\mu)$-star set of $A(C_{n})$ \cite{BK}, so it is a $\mu$-star set of $L_{C_{n}}^{1}$. Thus, by Corollary \ref{switch}, the proof is complete.
\end{proof}

\begin{example}In Corollary \ref{delpn}:
\begin{itemize}
\item
$n=3$ and $k=1$: A path with three vertices of valency $2$ in a graph $G$ can be replaced by an edge, without changing the Laplacian multiplicity of $3$ and the signless Laplacian multiplicity of $1$.
\item
$n=4$ and $k=1$: A path with four vertices of valency $2$ in a graph $G$ can be replaced by an edge, without changing the (signless) Laplacian multiplicity of $2$.
\end{itemize}
\end{example}

\section{Type II Reductions: Deleting Subgraphs}
In this section we generalize  Theorem \ref{dcluster} for a given generalized Laplacian eigenvalue $\mu$.
\begin{theorem}\label{cluster}
Let $\mu\in \mathbb{R},\rho\in \mathbb{R}-\{0\},r\in\mathbb{N}$, and $\mathcal{H},\{\mathcal{H}_i\}_{i=1}^{r-1},\mathcal{L},\{\mathcal{L}_i\}_{i=1}^{r-1},\mathcal{K}$ be symmetric matrices. Suppose that $L_{\mathcal{E}_{i}}^{\rho}$ has a $\mu$-eigenvector $\pmb{\gamma}^{i}$ such that ${\pmb{\gamma}^{i}}_{|J}=\pmb{0}$ and ${\pmb{\gamma}^{i}}_{|I_{i}}\neq \pmb{0}$, $i\in [r-1]$, and $L_{\mathcal{E}}^{\rho}$ has independent $\mu$-eigenvectors $\pmb{\beta}^{i}$ such that ${\pmb{\beta}^{i}}_{|J}=\pmb{0}$, $i\in [s]$. Then $m_{L_{\mathcal{G}}^{\rho}}(\mu)\geq s+r-1$, where
\begin{equation*}
\resizebox{ 0.8\textwidth}{!} 
{$
\mathcal{E}_{i}=
\begin{blockarray}{cc|c|c}
\begin{block}{c(c|c|c)}
  I & \mathcal{H} &\mathcal{A}  &0  \\\cline{1-4}
  J & \mathcal{A}^{T}& \mathcal{L}_{i}&B_{i}\\\cline{1-4}
  I_{i} & 0& B_{i}^{T} & \mathcal{H}_{i} \\
\end{block}
\end{blockarray},\,\, i\in[r-1],\,\,
\mathcal{E}=
\begin{blockarray}{c|c|c}
\begin{block}{(c|c|c)}
  \mathcal{H} &\mathcal{A}  &0  \\\cline{1-3}
  \mathcal{A}^{T}& \mathcal{L}&B\\\cline{1-3}
 0& B^{T} & \mathcal{K} \\
\end{block}
\end{blockarray},\,\, 
\mathcal{G}=
\begin{blockarray}{cc|c|c|c|c|c}
\begin{block}{c(c|c|c|c|c|c)}
I & \mathcal{H}   & 0 &\cdots&0& \mathcal{A} &0 \\\cline{1-7}
I_{1} &0&  \mathcal{H}_{1} &  \cdots&0& B_{1} &0 \\\cline{1-7}
\vdots&\vdots& \vdots  &\ddots  &\vdots&\vdots&\vdots \\\cline{1-7}
I_{r-1} &  0 & 0 &  \cdots & \mathcal{H}_{r-1} &  B_{r-1}&0 \\\cline{1-7}
J& \mathcal{A}^{T} &   B_{1}^{T} &  \cdots& B_{r-1}^{T}& \mathcal{L} & B  \\\cline{1-7}
J_{1}& 0 &   0 &  \cdots& 0& B^{T}&\mathcal{K}  \\
\end{block}
\end{blockarray}\,.$}
\end{equation*}
\end{theorem}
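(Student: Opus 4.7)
The plan is to prove the multiplicity bound by exhibiting $s+r-1$ linearly independent $\mu$-eigenvectors of $L_{\mathcal{G}}^{\rho}$ obtained by extending the given eigenvectors $\pmb{\beta}^{1},\ldots,\pmb{\beta}^{s}$ and $\pmb{\gamma}^{1},\ldots,\pmb{\gamma}^{r-1}$ by zero on the new blocks of $\mathcal{G}$. Concretely, for each $j\in[s]$, define $\widetilde{\pmb{\beta}}^{j}$ on the index set $I\cup I_{1}\cup\cdots\cup I_{r-1}\cup J\cup J_{1}$ of $\mathcal{G}$ by copying $\pmb{\beta}^{j}$ on $I\cup J\cup J_{1}$ and setting it to zero on each $I_{i}$. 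For each $i\in[r-1]$, define $\widetilde{\pmb{\gamma}}^{i}$ by copying $\pmb{\gamma}^{i}$ on $I\cup J\cup I_{i}$ and setting it to zero on $J_{1}$ and on every $I_{i'}$ with $i'\neq i$.

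Next I would verify, block by block, that $L_{\mathcal{G}}^{\rho}\widetilde{\pmb{\beta}}^{j}=\mu\widetilde{\pmb{\beta}}^{j}$ and $L_{\mathcal{G}}^{\rho}\widetilde{\pmb{\gamma}}^{i}=\mu\widetilde{\pmb{\gamma}}^{i}$. The point is that $D(\mathcal{G})$ differs from $D(\mathcal{E})$ and from $D(\mathcal{E}_{i})$ only on rows indexed by $J$ (through the extra columns from the new $I_{i'}$ blocks in $\mathcal{G}$), but our extensions are identically zero on $J$, so the spurious diagonal contributions vanish. On every other block the row sums of $\mathcal{G}$ coincide with those of $\mathcal{E}$ (respectively $\mathcal{E}_{i}$), and the off-diagonal action on the extended vector is either zero (because the corresponding block of $\mathcal{G}$ multiplies a zero coordinate) or exactly matches the action inside $\mathcal{E}$ or $\mathcal{E}_{i}$. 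So each verification reduces to the assumed eigenvalue equation for the original vector, evaluated block by block.

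Finally I would establish linear independence of $\{\widetilde{\pmb{\beta}}^{j}\}_{j=1}^{s}\cup\{\widetilde{\pmb{\gamma}}^{i}\}_{i=1}^{r-1}$. Suppose $\sum_{j}a_{j}\widetilde{\pmb{\beta}}^{j}+\sum_{i}b_{i}\widetilde{\pmb{\gamma}}^{i}=\pmb{0}$. Restricting to the block $I_{i}$ kills every $\widetilde{\pmb{\beta}}^{j}$ and every $\widetilde{\pmb{\gamma}}^{i'}$ with $i'\neq i$, leaving $b_{i}\,\pmb{\gamma}^{i}_{|I_{i}}=\pmb{0}$; since $\pmb{\gamma}^{i}_{|I_{i}}\neq\pmb{0}$, this forces $b_{i}=0$ for each $i\in[r-1]$. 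What remains is $\sum_{j}a_{j}\widetilde{\pmb{\beta}}^{j}=\pmb{0}$, which on the blocks $I\cup J\cup J_{1}$ is just $\sum_{j}a_{j}\pmb{\beta}^{j}=\pmb{0}$, and the assumed independence of the $\pmb{\beta}^{j}$'s gives $a_{j}=0$.

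The computations are all routine block manipulations; the only delicate point, which I expect to be the main bookkeeping obstacle, is tracking how the diagonal part $D(\mathcal{G})$ compares to $D(\mathcal{E})$ and to $D(\mathcal{E}_{i})$, and verifying that the discrepancies always act on zero coordinates of the extended eigenvectors. Once this is handled carefully, the argument produces exactly $s+r-1$ independent $\mu$-eigenvectors of $L_{\mathcal{G}}^{\rho}$, which yields the claimed inequality $m_{L_{\mathcal{G}}^{\rho}}(\mu)\geq s+r-1$.
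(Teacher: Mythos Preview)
Your proposal is correct and follows exactly the paper's approach: extend each $\pmb{\beta}^{j}$ and $\pmb{\gamma}^{i}$ by zero on the new blocks, check that the extensions remain $\mu$-eigenvectors of $L_{\mathcal{G}}^{\rho}$ (using crucially that the original vectors vanish on $J$, so the discrepancy in $D(\mathcal{G})$ on the $J$-rows is harmless), and verify linear independence by restricting to the $I_i$ blocks. The paper's own proof is terse (it merely writes down the extensions and declares the eigenvector and independence checks ``easy'' and ``obvious''), whereas you have spelled out the bookkeeping more carefully.
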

\begin{figure}[H]
\footnotesize
\centering
\begin{tikzpicture}[scale=0.6,line cap=round,line join=round,>=triangle 45,x=1.0cm,y=1.0cm]
\draw(-1.21,3.43) circle (0.9cm);
\draw(0.96,3.47) circle (0.71cm);
\draw(3.1,3.39) circle (0.9cm);
\draw(5.38,0.34) circle (0.57cm);
\draw(5.52,3.88) circle (0.66cm);
\draw(5.47,2.22) circle (0.676cm);
\draw(-1.12,0.76) circle (0.92cm);
\draw(1.18,0.69) circle (0.79cm);
\draw(3.19,0.65) circle (0.7cm);
\draw (-3.1,3.9) node[anchor=north west] { $\mathcal{E}$};
\draw (-3.1,1.2) node[anchor=north west] { $\mathcal{E}_{i}$};
\draw (8.5,0.7) node[anchor=north west] { $\mathcal{G}$};
\draw (-1.68,3.9) node[anchor=north west] { $\mathcal{H}$};
\draw (-1.65,1.2) node[anchor=north west] {$\mathcal{H}$};
\draw (0.56,3.9) node[anchor=north west] {$\mathcal{L}$};
\draw (0.71,1.23) node[anchor=north west] {$\mathcal{L}_i$};
\draw (2.89,3.90) node[anchor=north west] {$\mathcal{K}$};
\draw (3.01,1.15) node[anchor=north west] {$\mathcal{H}_i$};
\draw (5.07,4.36) node[anchor=north west] {$\mathcal{H}$};
\draw (4.79,2.63) node[anchor=north west] {$\mathcal{H}_1$};
\draw (4.83,2.03) node[anchor=north west] {$\vdots$};
\draw (4.74,0.71) node[anchor=north west] {$\mathcal{H}_{r-1}$};
\draw (7.34,2.3) node[anchor=north west] {$\mathcal{L}$};
\draw (10.12,2.38) node[anchor=north west] {$\mathcal{K}$};
\draw (-0.87,3.81)-- (0.83,3.79)-- (-0.85,3.16)-- (0.67,3.21);
\draw (1.27,3.72)-- (3.03,3.76)-- (1.34,3.23)-- (2.9,3.14);
\draw (-0.61,1.09)-- (0.87,1.09)-- (-0.61,0.52)-- (0.85,0.49);
\draw (1.61,0.94)-- (3.25,0.92)-- (1.52,0.54)-- (3.12,0.49);
\draw (-0.61,1.09)-- (0.85,0.49);
\draw (-0.87,3.81)-- (0.67,3.21);
\draw (1.27,3.72)-- (2.85,3.14);
\draw (1.63,0.94)-- (3.12,0.49);
\draw (-0.44,4.50) node[anchor=north west] {$\mathcal{A}$};
\draw (-0.28,1.82) node[anchor=north west] {$\mathcal{A}$};
\draw (1.44,4.44) node[anchor=north west] {$B$};
\draw (1.79,1.74) node[anchor=north west] {$B_i$};
\draw (6.5,4.19) node[anchor=north west] {$\mathcal{A}$};
\draw (5.81,2.00) node[anchor=north west] {$B_1$};
\draw (6.32,0.79) node[anchor=north west] {$B_{r-1}$};
\draw [rotate around={-3.17:(10.25,2.01)}] (10.25,2.01) ellipse (1.09cm and 0.94cm);
\draw (7.58,2.53)-- (5.93,4.03)-- (8.17,2.62)-- (5.68,3.57);
\draw(7.74,1.98) circle (1.12cm);
\draw (5.68,3.57)-- (7.58,2.53);
\draw (5.74,2.53)-- (7.21,2.19);
\draw (7.21,2.19)-- (5.74,2.01);
\draw (5.74,2.01)-- (7.03,1.76);
\draw (5.74,2.5)-- (7.03,1.76);
\draw (5.56,0.69)-- (7.31,1.36);
\draw (5.56,0.69)-- (7.31,1.36);
\draw (5.56,0.69)-- (7.77,1.06);
\draw (7.77,1.06)-- (5.68,0.23);
\draw (5.68,0.23)-- (7.31,1.36);
\draw (8.41,2.41)-- (9.98,2.38);
\draw (8.41,2.41)-- (10.13,1.55);
\draw (10.13,1.55)-- (8.41,1.58);
\draw (8.41,1.58)-- (9.98,2.38);
\draw (8.56,3.05) node[anchor=north west] {$B$};
\end{tikzpicture}
\caption{A schematic figure of Theorem \ref{cluster}. }
\end{figure}
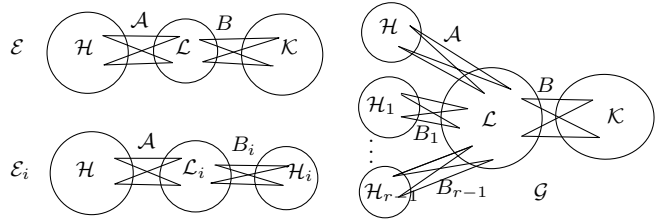
Now, we can conclude the following theorem of \cite{GM} on $d$-clusters.
\begin{corollary}{\rm \cite{GM}}
 Let $G$ be a graph with $k$ independent $d$-clusters of orders $r_{1},\ldots,r_{k}$. Then $m_{L(G)}(d)\geq \sum_{i=1}^{k} r_i-k$.
 \end{corollary}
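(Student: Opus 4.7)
My plan is to exhibit $\sum r_j - k$ linearly independent $d$-eigenvectors of $L(G)$, following the template of Theorem~\ref{cluster}. For each cluster $C_j$ I would fix a base vertex $b_j\in C_j$, and to every non-base $v\in C_j\setminus\{b_j\}$ associate the vector $\gamma_{j,v}:=\pmb{e}_{b_j}-\pmb{e}_v$; the collection has cardinality $\sum_{j=1}^k(r_j-1)=\sum_{j=1}^k r_j-k$.

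The cluster hypothesis gives $b_j$ and $v$ the same degree $d$ and the same open neighborhood $N(C_j)$, so the off-diagonal contributions in $L(G)\pmb{e}_{b_j}$ and $L(G)\pmb{e}_v$ coincide and cancel, forcing $L(G)\gamma_{j,v}=d\,\gamma_{j,v}$. Linear independence is then immediate by coordinate inspection: evaluating any relation $\sum a_{j,v}\gamma_{j,v}=\pmb{0}$ at a non-base vertex $v\in C_j$ yields $-a_{j,v}=0$, since $v$ is the base of no cluster and appears only in $\gamma_{j,v}$.

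I would frame this as the specialisation of Theorem~\ref{cluster} obtained by taking $\mu=d$, $\rho=1$, $I=\{b_1,\dots,b_k\}$, the singletons $I_i=\{v_i\}$ ranging over all non-base cluster vertices (so that $r-1=\sum r_j-k$), $J=V(G)\setminus(I\cup\bigsqcup_i I_i)$, $J_1=\emptyset$, and $s=0$; each $\gamma_{j,v}$ plays the role of the required $\gamma^i$ on $L_{\mathcal{E}_i}^1$, satisfying $\gamma^i|_J=\pmb{0}$ and $\gamma^i|_{I_i}=-1\neq 0$. The main subtlety to flag is that the block form of $\mathcal{G}$ in Theorem~\ref{cluster} forbids edges between different $I_i$'s, which can fail when two clusters are mutually adjacent (as in $K_{m,n}$ viewed with two clusters); the direct verification above sidesteps this, since it uses only the intra-cluster twin relation and hence remains valid even in the presence of cross-cluster edges, yielding $m_{L(G)}(d)\geq\sum_{j=1}^k r_j-k$ in full generality.
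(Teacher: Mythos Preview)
Your direct construction in the first two paragraphs is correct and is exactly the eigenvector family the paper obtains. The only difference is in how Theorem~\ref{cluster} is invoked: the paper does \emph{not} take $I=\{b_1,\dots,b_k\}$ globally, but applies the theorem once \emph{per cluster}, with $\mathcal{H}=\mathcal{H}_i=[0]_{1\times 1}$, $\mathcal{L}_i=[0]_{d\times d}$, $\mathcal{A}=B_i^T=\pmb{j}_d$; thus $I=\{b_j\}$, the $I_i$'s are the remaining vertices of $C_j$, $J=N(C_j)$, and $J_1$ is the rest of $G$. This per-cluster setup always fits the block form of $\mathcal{G}$ (a single cluster is independent and all its neighbours lie in $J$), so the cross-cluster obstruction you flag never arises. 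The $r_j-1$ eigenvectors produced for cluster $j$ are precisely your $\pmb{e}_{b_j}-\pmb{e}_v$, supported on $C_j$; since the clusters are pairwise disjoint, the $k$ batches together give $\sum_j(r_j-1)$ independent $d$-eigenvectors. So your proof is correct and your self-diagnosed issue with the global framing is real, but it is sidestepped simply by matching the paper's per-cluster specialisation rather than by abandoning Theorem~\ref{cluster}.
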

\begin{proof}
With the notations of Theorem \ref{cluster}, we put 
$\rho=1, \mathcal{H}=\mathcal{H}_{i}=[0]_{1\times 1}$, $\mathcal{L}_{i}=[0]_{d\times d}$, and  $\mathcal{A}=B_{i}^{T}=\pmb{j}_{d}$, then $m_{L_{\mathcal{E}_i}^{\rho}}(d)=1$, for $d\neq 2$, and $m_{L_{\mathcal{E}_i}^{\rho}}(d)=2$, for $d=2$,  and $\pmb{\gamma}^{i}=(1,0,\ldots,0,-1)^{T}$ is a $d$-eigenvector. Since $d$-clusters are independent, by using Theorem \ref{cluster}, $k$ times, we have 
$m_{L_{G}^{\rho}}(d)\geq \sum_{i=1}^{k} (r_i-1)= \sum_{i=1}^{k} r_i-k$.
\end{proof}
Now, we give our first proof for Theorem \ref{pkqk}. First, we need the following lemma on eigenvectors of the path graph.
\begin{lemma}{\rm \cite{Sp}}\label{pathvec}
Let $n$ be a positive integer. Then $\displaystyle 4\cos^{2}(\frac{j\pi}{2n})$ for $j\in [n]$ {\rm (}$\displaystyle 4\sin^{2}(\frac{l\pi}{2n})$ for $0\leq l\leq n-1${\rm )} is a Laplacian eigenvalue of $P_n$ with the corresponding eigenvector $\pmb{v}_j$, where $\pmb{v}_{j}(u)=\cos(\frac{(n-j)(2u-1)\pi}{2n})$, for $u\in [n]$.
\end{lemma}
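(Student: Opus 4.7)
The plan is to verify directly that $L(P_n)\pmb{v}_j = 4\cos^2(\frac{j\pi}{2n})\pmb{v}_j$ by evaluating each coordinate. Set $\theta := \frac{(n-j)\pi}{2n}$, so that $\pmb{v}_j(u) = \cos((2u-1)\theta)$, and observe that the claimed eigenvalue equals $4\sin^2(\theta)$ by the complementary-angle identity $\sin(\frac{\pi}{2}-\theta) = \cos\theta$. The Laplacian $L(P_n)$ is tridiagonal with diagonal $(1,2,\ldots,2,1)$ and $-1$ on the sub- and super-diagonals, so the computation splits naturally into the interior vertices $u\in\{2,\ldots,n-1\}$ and the two leaves $u=1$ and $u=n$.

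For an interior vertex $u$ one has $(L(P_n)\pmb{v}_j)(u) = 2\pmb{v}_j(u) - \pmb{v}_j(u-1) - \pmb{v}_j(u+1)$. Applying the product-to-sum identity $\cos(\alpha-\beta) + \cos(\alpha+\beta) = 2\cos\alpha\cos\beta$ with $\alpha = (2u-1)\theta$ and $\beta = 2\theta$ yields $\pmb{v}_j(u-1)+\pmb{v}_j(u+1) = 2\cos(2\theta)\pmb{v}_j(u)$, and the interior equation reduces to $2(1-\cos(2\theta))\pmb{v}_j(u) = 4\sin^2(\theta)\pmb{v}_j(u)$, as required. This step uses no special property of $j$.

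The leaves require more care because the diagonal entry at a leaf is $1$ rather than $2$. At $u=1$ the equation $\pmb{v}_j(1) - \pmb{v}_j(2) = 4\sin^2(\theta)\pmb{v}_j(1)$ follows from the unconditional identity $\cos\theta - \cos(3\theta) = 2\sin(2\theta)\sin\theta = 4\sin^2(\theta)\cos\theta$. The endpoint $u=n$ is the one step of substance: a similar manipulation rewrites $\pmb{v}_j(n) - \pmb{v}_j(n-1)$ as $-2\sin((2n-2)\theta)\sin\theta$, and matching this against the required value $4\sin^2(\theta)\cos((2n-1)\theta)$ collapses via a product-to-sum expansion to the single condition $\sin(2n\theta) = 0$. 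Since $2n\theta = (n-j)\pi$ with $n-j\in\mathbb{Z}$, this condition holds, and it is precisely here that the discrete set of admissible values of $j$ is enforced by the boundary of the path.

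This establishes that each $\pmb{v}_j$ with $j\in[n]$ is a $4\cos^2(\frac{j\pi}{2n})$-eigenvector of $L(P_n)$. The alternative formulation in terms of $4\sin^2(\frac{l\pi}{2n})$ for $0\leq l\leq n-1$ is the same statement under the substitution $l = n-j$. The main (and only) conceptual step is the boundary identity at $u=n$; the rest is a direct application of the addition formulas for cosine, so the difficulty is organisational rather than substantive.
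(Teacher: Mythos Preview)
Your verification is correct: the interior and left-endpoint computations are routine, and you correctly identify that the right endpoint $u=n$ is where the quantisation condition $\sin(2n\theta)=0$ enters, which is satisfied precisely because $2n\theta=(n-j)\pi$ is an integer multiple of $\pi$. You might add one sentence observing that $\pmb{v}_j(1)=\cos\theta>0$ for $j\in[n]$ (since $\theta\in[0,\pi/2)$), so $\pmb{v}_j$ is genuinely nonzero and hence an eigenvector.

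As for comparison with the paper: the paper does not supply a proof of this lemma at all---it simply cites Spielman's lecture notes \cite{Sp} and uses the result. Your direct trigonometric verification is the standard argument and is exactly what one would find in the cited source, so there is no alternative approach to contrast against.
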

Since the signless Laplacian matrix and the Laplacian matrix of a path are similar, it is easy to see that $\pmb{w}_{j}$ is a  signless Laplacian eigenvector corresponding to  $4\cos^{2}(\frac{j\pi}{2n})$, where $\pmb{w}_{j}(u)=(-1)^{u}\pmb{v}_{j}(u)$, for $j,u\in [n]$.\\

{\noindent \textbf{First proof of Theorem \ref{pkqk}:}
By Lemma \ref{pathvec}, if $n=2k+1$, $j=2t$, and $u=k+1$, then $\pmb{v}_{j}(u)=0$. With the notations of Theorem \ref{cluster}, we put $\mu=4\cos^{2}(\frac{2t\pi}{2(2k+1)})$, $\rho=1, \mathcal{H}=\mathcal{H}_{i}=A(P_{k})$, $\mathcal{L}_{i}=[0]_{1\times 1}$, $\mathcal{A}=\pmb{e}_{k}$, and $B_{i}^{T}=\pmb{e}_{1}$, then we have $\mathcal{E}_{i}=A(P_{2k+1})$. If we put $\pmb{\gamma}^{i}=\pmb{v}_{j}$, then by using Theorem \ref{cluster}, $q_{k}(G)$ times, we have $m_{L_{G}^{\rho}}(4\cos^{2}(\frac{t\pi}{2k+1}))\geq p_{k}(G)-q_{k}(G)$, $t\in [k]$. By similar proof, we have the statement is true for $\rho=-1$.
}

\section{Type III Reductions: Splitting Vertices}

In this section, we state a splitting method to simplify graphs for a generalized Laplacian eigenvalue $\mu$ and a particular subgraph corresponding to it.

\begin{theorem}\label{spl}
Let $\mu\in \mathbb{R},\,\rho\in \mathbb{R}-\{0\}$ and $\mathcal{H},\mathcal{L}$ be real symmetric matrices. Suppose that $m_{L_{\mathcal{H}}^{\rho}+D(\pmb{x})}(\mu)=1$. If $\pmb{x}^{T}\pmb{\alpha}\neq 0$, for a  $\mu$-eigenvector $\pmb{\alpha}$ of $L_{\mathcal{H}}^{\rho}+D(\pmb{x})$, then $m_{L_{\mathcal{G}}^{\rho}}(\mu)=m_{L_{\mathcal{E}}^{\rho}}(\mu)$, where 
\begin{equation*}
\resizebox{ 0.75\textwidth}{!} 
{$
\mathcal{G}=
\begin{blockarray}{cc|c|c}
\begin{block}{c(c|c|c)}
  I & \mathcal{H} & \pmb{x} & 0 \\\cline{1-4}
  v & \pmb{x}^{T} & a &  \pmb{y}^{T}\\\cline{1-4}
  J & 0 &  \pmb{y} &\mathcal{L}  \\
\end{block}
\end{blockarray}\,,\,\,\,
\pmb{y}=
\begin{blockarray}{c}
\begin{block}{(c)}
  y_1 \\
  y_2 \\
  \vdots \\
  y_{|J|} \\
\end{block}
\end{blockarray}\,,\,\,\,
\mathcal{E}=
\begin{blockarray}{cc|c|c|c|c}
\begin{block}{c(c|c|c|c|c)}
\begin{array}{c}
I_{1} \\ 
v_{1}
\end{array}& \begin{array}{c|c}
 \mathcal{H} &  \pmb{x} \\\cline{1-2}
 \pmb{x}^{T} & a 
\end{array}  & 0 &\cdots&0& \begin{array}{c}
0 \\ \cline{1-1}
y_{1}\pmb{e}^{T}_{1}
\end{array}  \\\cline{1-6}
\begin{array}{c}
I_{2} \\ 
v_{2}
\end{array}&0&\begin{array}{c|c}
 \mathcal{H} &  \pmb{x} \\\cline{1-2}
 \pmb{x}^{T} & a 
\end{array}  &  \cdots&0& \begin{array}{c}
0 \\ \cline{1-1}
y_{2}\pmb{e}^{T}_{2}
\end{array}\\\cline{1-6}
\vdots&\vdots& \vdots  &\ddots  &\vdots&\vdots\\\cline{1-6}
\begin{array}{c}
I_{|J|} \\ 
v_{|J|}
\end{array}&  0 & 0 &  \cdots &\begin{array}{c|c}
 \mathcal{H} &  \pmb{x} \\\cline{1-2}
 \pmb{x}^{T} & a 
\end{array}& \begin{array}{c}
0\\ \cline{1-1}
y_{|J|}\pmb{e}^{T}_{|J|}
\end{array}\\\cline{1-6}
J&  \begin{array}{c|c}
0& y_{1}\pmb{e}_{1}
\end{array} &    \begin{array}{c|c}
0& y_{2}\pmb{e}_{2}
\end{array} &  \cdots& \begin{array}{c|c}
0& y_{|J|}\pmb{e}_{|J|}
\end{array}&
  \mathcal{L}  \\
\end{block}
\end{blockarray}\,,$}
\end{equation*}
\end{theorem}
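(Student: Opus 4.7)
The strategy is to compute $m_{L_{\mathcal{G}}^{\rho}}(\mu)$ and $m_{L_{\mathcal{E}}^{\rho}}(\mu)$ separately, in parallel, and show that each equals $m_{L_{\mathcal{L}}^{\rho}+D(\pmb{y})}(\mu)$. The key preliminary is a bookkeeping of the weighted-degree diagonals. Writing out $L_{\mathcal{G}}^{\rho}$ in block form, the $I\times I$ principal block is exactly $L_{\mathcal{H}}^{\rho}+D(\pmb{x})$ (vertices of $I$ pick up the $\pmb{x}$-edges to $v$ in their degrees), the $J\times J$ block is $L_{\mathcal{L}}^{\rho}+D(\pmb{y})$, the $(I,v)$ off-diagonal is $-\rho\pmb{x}$, the $(v,J)$ off-diagonal is $-\rho\pmb{y}^{T}$, and the $(I,J)$ block is $0$. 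The matrix $L_{\mathcal{E}}^{\rho}$ has the same $I\times I$ block $L_{\mathcal{H}}^{\rho}+D(\pmb{x})$ in each copy, the same $J\times J$ block $L_{\mathcal{L}}^{\rho}+D(\pmb{y})$ (since $D(\pmb{y})$ is recovered by summing the edge-weights $y_{k}$ from $v_{k}$ to the $k$-th vertex of $J$), and $(v_{k},J)$ off-diagonal $-\rho y_{k}\pmb{e}_{k}^{T}$.

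Next I would analyze $\mu$-eigenvectors of $L_{\mathcal{G}}^{\rho}$ written as $(\pmb{z}_{I},z_{v},\pmb{z}_{J})$. The $I$-row equation reads
\[
(L_{\mathcal{H}}^{\rho}+D(\pmb{x})-\mu\mathbb{I})\,\pmb{z}_{I}=\rho z_{v}\pmb{x}.
\]
Since the symmetric operator on the left has kernel spanned by $\pmb{\alpha}$ (by the multiplicity-$1$ hypothesis), the right-hand side must be orthogonal to $\pmb{\alpha}$; together with $\pmb{x}^{T}\pmb{\alpha}\neq 0$ this forces $z_{v}=0$ and then $\pmb{z}_{I}=c\,\pmb{\alpha}$ for some scalar $c$. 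The $v$-row equation collapses to $c\,\pmb{x}^{T}\pmb{\alpha}=-\pmb{y}^{T}\pmb{z}_{J}$, uniquely determining $c$ from $\pmb{z}_{J}$. The $J$-row equation, once $z_{v}=0$ is substituted, becomes $(L_{\mathcal{L}}^{\rho}+D(\pmb{y}))\pmb{z}_{J}=\mu\pmb{z}_{J}$. Hence the $\mu$-eigenvectors of $L_{\mathcal{G}}^{\rho}$ are in linear bijection with the $\mu$-eigenvectors of $L_{\mathcal{L}}^{\rho}+D(\pmb{y})$, so $m_{L_{\mathcal{G}}^{\rho}}(\mu)=m_{L_{\mathcal{L}}^{\rho}+D(\pmb{y})}(\mu)$.

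The same three-step analysis applies to $L_{\mathcal{E}}^{\rho}$ copy by copy. For each $k$, the $I_{k}$-equation has exactly the same form as the $I$-equation above (the inhomogeneity is $\rho w_{v_{k}}\pmb{x}$), so it forces $w_{v_{k}}=0$ and $\pmb{w}_{I_{k}}=c_{k}\pmb{\alpha}$. The $v_{k}$-equation then yields $c_{k}=-y_{k}w_{J,k}/(\pmb{x}^{T}\pmb{\alpha})$, uniquely determined by the $k$-th entry of $\pmb{w}_{J}$. The $J$-equation, after substituting $w_{v_{k}}=0$, again reduces to $(L_{\mathcal{L}}^{\rho}+D(\pmb{y}))\pmb{w}_{J}=\mu\pmb{w}_{J}$. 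Therefore $m_{L_{\mathcal{E}}^{\rho}}(\mu)=m_{L_{\mathcal{L}}^{\rho}+D(\pmb{y})}(\mu)$, and the two multiplicities agree.

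The main difficulty I expect is not conceptual but notational: carefully tracking the diagonal contributions coming from weighted degrees across every block of $\mathcal{G}$ and $\mathcal{E}$, so that the $I\times I$ and $J\times J$ principal blocks of the generalized Laplacians can indeed be identified with $L_{\mathcal{H}}^{\rho}+D(\pmb{x})$ and $L_{\mathcal{L}}^{\rho}+D(\pmb{y})$ respectively, uniformly for arbitrary $\rho\neq 0$ and for the diagonal term $a$ on $v$. Once that identification is made, the Fredholm-type step on $L_{\mathcal{H}}^{\rho}+D(\pmb{x})-\mu\mathbb{I}$ (using $\pmb{\alpha}^{T}\pmb{x}\neq 0$) is the conceptual heart of both computations: it is what forces the $v$- and $v_{k}$-components of every $\mu$-eigenvector to vanish and furnishes the common parameterization through $\pmb{z}_{J}$ (resp. $\pmb{w}_{J}$).
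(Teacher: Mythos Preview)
Your argument is correct and shares the paper's core step: using the simplicity of the $\mu$-eigenspace of $L_{\mathcal{H}}^{\rho}+D(\pmb{x})$ together with $\pmb{x}^{T}\pmb{\alpha}\neq 0$ to force $z_v=0$ (resp.\ $w_{v_k}=0$) and $\pmb{z}_I\in\mathbb{R}\pmb{\alpha}$ for every $\mu$-eigenvector. The organizations differ slightly. The paper, after establishing this same structural fact, builds direct linear maps between the $\mu$-eigenspaces of $L_{\mathcal{G}}^{\rho}$ and $L_{\mathcal{E}}^{\rho}$ (take the $J$-component of an eigenvector and lift it into the other matrix), then checks separately that each map preserves linear independence. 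You instead factor both eigenspaces through a common target: you note that once $z_v=0$ the $J$-row equation reads exactly $(L_{\mathcal{L}}^{\rho}+D(\pmb{y}))\pmb{z}_J=\mu\pmb{z}_J$, so each multiplicity equals $m_{L_{\mathcal{L}}^{\rho}+D(\pmb{y})}(\mu)$. This buys you a slightly sharper conclusion (an explicit formula for the common value) and dispenses with the two cross-maps and their independence checks; the paper's presentation stays closer to $\mathcal{G}$ and $\mathcal{E}$ themselves and never isolates the auxiliary operator $L_{\mathcal{L}}^{\rho}+D(\pmb{y})$. One small point to make explicit in your write-up: the bijection also requires the reverse direction, i.e.\ that every $\mu$-eigenvector $\pmb{z}_J$ of $L_{\mathcal{L}}^{\rho}+D(\pmb{y})$ does lift (with the indicated $c$, $c_k$) to a $\mu$-eigenvector of $L_{\mathcal{G}}^{\rho}$, $L_{\mathcal{E}}^{\rho}$; this is immediate from the block equations you have already written down, but should be stated.
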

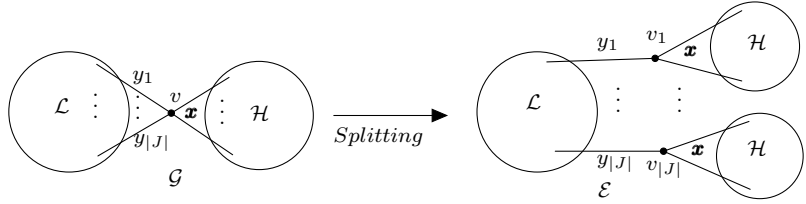
\begin{figure}[H]
\centering
\footnotesize
\begin{tikzpicture}[scale=0.8,line cap=round,line join=round,>=triangle 45,x=1.0cm,y=0.99cm]
\draw(-3.46,2.56) ellipse (1.cm and 1.cm);
\draw(-0.3,2.5) ellipse (0.9cm and 0.9cm);
\draw (-1.76,2.54)-- (-0.8,3.14);
\draw (-1.76,2.54)-- (-0.74,1.84);
\draw (-1.76,2.54)-- (-3.,3.36);
\draw (-1.76,2.54)-- (-2.98,1.82);
\draw (-2.54,3.38) node[anchor=north west] {$y_1$};
\draw (-3.20,3.26) node[anchor=north west] {$\vdots$};
\draw (-2.50,3.23) node[anchor=north west] {$\vdots$};
\draw (-2.52,2.33) node[anchor=north west] {$y_{|J|}$};
\draw (-1.66,2.76) node[anchor=north west] {$\pmb{x}$};
\draw (-3.84,2.92) node[anchor=north west] {$\mathcal{L}$};
\draw (-0.58,2.8) node[anchor=north west] {$\mathcal{H}$};
\draw (-1.10,3.16) node[anchor=north west] {$\vdots$};
\draw [->] (0.94,2.5) -- (2.84,2.5);
\draw (-1.92,3.02) node[anchor=north west] {$v$};
\draw(4.32,2.56) ellipse (1.cm and 1cm);
\draw (6.28,3.46)-- (4.48,3.4);
\draw (6.42,1.9)-- (4.62,1.9);
\draw(7.94,3.66) ellipse (0.74cm and 0.74cm);
\draw(8.02,1.82) ellipse (0.72cm and 0.71cm);
\draw (6.28,3.46)-- (7.72,4.26);
\draw (6.28,3.46)-- (7.72,3.08);
\draw (6.42,1.9)-- (7.84,2.4);
\draw (6.42,1.9)-- (7.86,1.26);
\draw (4.0,2.98) node[anchor=north west] {$\mathcal{L}$};
\draw (7.68,4) node[anchor=north west] {$\mathcal{H}$};
\draw (7.68,2.22) node[anchor=north west] {$\mathcal{H}$};
\draw (6.64,3.8) node[anchor=north west] {$\pmb{x}$};
\draw (6.76,2.14) node[anchor=north west] {$\pmb{x}$};
\draw (5.18,3.94) node[anchor=north west] {$y_1$};
\draw (5.48,3.36) node[anchor=north west] {$\vdots$};
\draw (6.5,3.36) node[anchor=north west] {$\vdots$};
\draw (5.18,1.98) node[anchor=north west] {$y_{|J|}$};
\draw (0.8,2.44) node[anchor=north west] {$Splitting$};
\draw (6.,4.05) node[anchor=north west] {$v_{1}$};
\draw (6.,1.88) node[anchor=north west] {$v_{|J|}$};
\draw (-1.94,1.72) node[anchor=north west] {$\mathcal{G}$};
\draw (5.2,1.5) node[anchor=north west] {$\mathcal{E}$};
\begin{scriptsize}
\draw [fill=black] (-1.76,2.54) circle (1.5pt);
\draw [fill=black] (6.28,3.46) circle (1.5pt);
\draw [fill=black] (6.42,1.9) circle (1.5pt);
\end{scriptsize}
\end{tikzpicture}
\caption{The splitting method: Theorem \ref{spl}. }
\end{figure}

\begin{lemma}\label{splitlem}
Let $n\in\mathbb{N},\, a,\mu\in \mathbb{R},\,\rho\in \mathbb{R}-\{0\}, \,\mathcal{H}\in \text{\rm Sym}_{n}(\mathbb{R})$, and $\pmb{x}\in\mathbb{R}^{n}$. The following statements are equivalent:
\begin{enumerate}[(i)]
\item
$m_{L_{\mathcal{H}}^{\rho}+D(\pmb{x})}(\mu)=1$ and $\pmb{x}^{T}\pmb{\alpha}\neq 0$, for a  $\mu$-eigenvector $\pmb{\alpha}$ of $L_{\mathcal{H}}^{\rho}+D(\pmb{x})$;
\item
$m_{L_{\mathcal{K}}^{\rho}}(\mu)=1$ and $\pmb{\beta}(v)=0$, for a  $\mu$-eigenvector $\pmb{\beta}$ of $L_{\mathcal{K}}^{\rho}$.
\end{enumerate}
Furthermore, (i) and (ii) imply $m_{L_{\widehat{\mathcal{H}}}^{\rho}}(\mu)=0$, where
\[
\mathcal{K}=
\begin{blockarray}{cc|c|c}
\begin{block}{c(c|c|c)}
I&\mathcal{H} & \pmb{x} &0  \\\cline{1-4}
v&\pmb{x}^{T} & a &  \pmb{x}^{T}\\\cline{1-4}
I'&0 &  \pmb{x} &\mathcal{H}  \\
\end{block}
\end{blockarray}
\quad
\text{and}\quad
\widehat{\mathcal{H}}=
\begin{blockarray}{cc|c}
\begin{block}{c(c|c)}
I&  \mathcal{H} & \pmb{x} \\\cline{1-3}
v&  \pmb{x}^{T} & a \\
\end{block}
\end{blockarray}\,.
\]
\end{lemma}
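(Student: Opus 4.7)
My plan is to exploit the $\mathbb{Z}/2$ symmetry of $\mathcal{K}$: the involution swapping the two copies $I$ and $I'$ of the index set commutes with $L_{\mathcal{K}}^{\rho}$, so its $\mu$-eigenspace splits into a symmetric part consisting of vectors $(\pmb{u},c,\pmb{u})$ and an antisymmetric part consisting of vectors $(\pmb{u},0,-\pmb{u})$. Writing $P := L_{\mathcal{H}}^{\rho} + D(\pmb{x})$ and letting $b$ denote the $(v,v)$-entry of $L_{\mathcal{K}}^{\rho}$, a direct block computation yields
\[
L_{\mathcal{K}}^{\rho}=\begin{pmatrix}P & -\rho\pmb{x} & 0\\ -\rho\pmb{x}^{T} & b & -\rho\pmb{x}^{T}\\ 0 & -\rho\pmb{x} & P\end{pmatrix},
\]
from which antisymmetric $\mu$-eigenvectors are exactly $(\pmb{u},0,-\pmb{u})$ with $\pmb{u}$ a $\mu$-eigenvector of $P$, and symmetric $\mu$-eigenvectors $(\pmb{u},c,\pmb{u})$ correspond to solutions of the linear system $(P-\mu\mathbb{I})\pmb{u}=\rho c\pmb{x}$ and $(b-\mu)c=2\rho\pmb{x}^{T}\pmb{u}$. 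This gives the decomposition $m_{L_{\mathcal{K}}^{\rho}}(\mu)=m_{P}(\mu)+s(\mu)$, where $s(\mu)$ is the dimension of the symmetric $\mu$-eigenspace, and it shows that an eigenvector vanishes at $v$ iff it is purely antisymmetric.

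For (i) $\Rightarrow$ (ii), I assume $m_{P}(\mu)=1$ with $\pmb{\alpha}$ spanning the eigenspace and $\pmb{x}^{T}\pmb{\alpha}\neq 0$. Since $P$ is symmetric, $\mathrm{image}(P-\mu\mathbb{I})=\mathrm{span}(\pmb{\alpha})^{\perp}$, which does not contain $\pmb{x}$; hence the first equation of the system forces $c=0$, the second then forces $\pmb{x}^{T}\pmb{u}=0$, and so $\pmb{u}\in\mathrm{span}(\pmb{\alpha})\cap\pmb{\alpha}^{\perp}=\{0\}$. Thus $s(\mu)=0$, the antisymmetric vector $(\pmb{\alpha},0,-\pmb{\alpha})$ is the unique $\mu$-eigenvector up to scale, and it vanishes at $v$.

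For (ii) $\Rightarrow$ (i), since $m_{P}(\mu)+s(\mu)=1$, either $m_{P}(\mu)=1,\,s(\mu)=0$ or $m_{P}(\mu)=0,\,s(\mu)=1$. In the latter case $P-\mu\mathbb{I}$ is invertible, forcing the unique symmetric eigenvector to have $c\neq 0$ (else $\pmb{u}=0$), which contradicts the hypothesis that some $\mu$-eigenvector of $L_{\mathcal{K}}^{\rho}$ vanishes at $v$. So the former case holds; and if every $\mu$-eigenvector of $P$ were orthogonal to $\pmb{x}$, then $(\pmb{\alpha},0,\pmb{\alpha})$ would be a nonzero symmetric $\mu$-eigenvector, contradicting $s(\mu)=0$. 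Hence (i) follows.

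For the final assertion, $L_{\widehat{\mathcal{H}}}^{\rho}$ has the $2\times 2$ block form $\bigl(\begin{smallmatrix}P & -\rho\pmb{x}\\ -\rho\pmb{x}^{T} & b'\end{smallmatrix}\bigr)$ for a scalar $b'$, and the same image argument applies: for any $\mu$-eigenvector $(\pmb{u},c)$, the top block equation $(P-\mu\mathbb{I})\pmb{u}=\rho c\pmb{x}$ combined with $\pmb{x}\notin\mathrm{span}(\pmb{\alpha})^{\perp}$ forces $c=0$, and then $\pmb{u}\in\mathrm{span}(\pmb{\alpha})$ together with $\pmb{x}^{T}\pmb{u}=0$ forces $\pmb{u}=0$, so $m_{L_{\widehat{\mathcal{H}}}^{\rho}}(\mu)=0$. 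The main obstacle I expect is the case analysis in (ii) $\Rightarrow$ (i), where both failure modes — a symmetric eigenvector with $c\neq 0$, and a $P$-eigenvector orthogonal to $\pmb{x}$ — must be ruled out by invoking the two halves of the hypothesis $m_{L_{\mathcal{K}}^{\rho}}(\mu)=1$ and $\pmb{\beta}(v)=0$ in turn.
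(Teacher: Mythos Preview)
Your proof is correct. The approach is close in spirit to the paper's but organised differently: the paper takes an arbitrary $\mu$-eigenvector $\pmb{\beta}$ of $L_{\mathcal{K}}^{\rho}$, writes out the block-row equations, and multiplies the $I$-block equation by $\pmb{\alpha}^{T}$ to force $\pmb{\beta}(v)=0$, $\pmb{\beta}_{|I}=a_{1}\pmb{\alpha}$, $\pmb{\beta}_{|I'}=a_{2}\pmb{\alpha}$, and then $a_{2}=-a_{1}$ from the $v$-row; it then dismisses (ii) $\Rightarrow$ (i) with ``in a similar manner''. You instead front-load the $\mathbb{Z}/2$ involution swapping $I$ and $I'$, split the eigenspace into symmetric and antisymmetric parts, and identify these with the $\mu$-eigenspace of $P$ and a two-equation linear system respectively. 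Your ``$\mathrm{image}(P-\mu\mathbb{I})=\pmb{\alpha}^{\perp}$'' observation is exactly the paper's ``multiply by $\pmb{\alpha}^{T}$'' trick rephrased, so the technical core is the same. What your packaging buys is a cleaner and fully explicit argument for (ii) $\Rightarrow$ (i), where the paper is terse: your case split $m_{P}(\mu)+s(\mu)=1$ together with the two separate contradictions (a symmetric eigenvector with $c\neq 0$, and $(\pmb{\alpha},0,\pmb{\alpha})$ being symmetric when $\pmb{x}^{T}\pmb{\alpha}=0$) makes the logic transparent.

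One small imprecision: the sentence ``an eigenvector vanishes at $v$ iff it is purely antisymmetric'' is not true in general (a symmetric eigenvector $(\pmb{u},0,\pmb{u})$ with $\pmb{x}^{T}\pmb{u}=0$ would also vanish at $v$). However, your actual arguments never rely on this ``iff''; in (i) $\Rightarrow$ (ii) you show $s(\mu)=0$ directly, and in (ii) $\Rightarrow$ (i) you argue from $c\neq 0$ in the symmetric case. So this is a cosmetic slip, not a gap.
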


\begin{figure}[H]
\footnotesize
\centering
\begin{tikzpicture}[scale=0.7,line cap=round,line join=round,>=triangle 45,x=1.0cm,y=1.0cm]
\draw(-3.25,2.56) circle (0.91cm);
\draw(-0.25,2.5) circle (0.91cm);
\draw (-1.76,2.54)-- (-0.8,3.14);
\draw (-1.76,2.54)-- (-0.74,1.84);
\draw (-1.76,2.54)-- (-2.74,3.2);
\draw (-1.76,2.54)-- (-2.76,1.9);
\draw (-1.66,2.86) node[anchor=north west] {$\pmb{x}$};
\draw (-3.64,2.9) node[anchor=north west] {$\mathcal{H}$};
\draw (-0.61,2.9) node[anchor=north west] {$\mathcal{H}$};
\draw (-2.,3.12) node[anchor=north west] {$v$};
\draw(2.48,2.54) circle (0.91cm);
\draw (4,2.67) node[anchor=north west] {$v$};
\draw (2.9,3.25)-- (4.16,2.66);
\draw (4.16,2.66)-- (2.9,1.8);
\draw (3.3,2.9) node[anchor=north west] {$\pmb{x}$};
\draw (-2.42,2.84) node[anchor=north west] {$\pmb{x}$};
\draw (-1.94,1.66) node[anchor=north west] {$\mathcal{K}$};
\draw (2.3,1.6) node[anchor=north west] {$\widehat{\mathcal{H}}$};
\draw (2.1,2.9) node[anchor=north west] {$\mathcal{H}$};
\begin{scriptsize}
\draw [fill=black] (-1.76,2.54) circle (1.5pt);
\draw [fill=black] (4.16,2.66) circle (1.5pt);
\end{scriptsize}
\end{tikzpicture}
\caption{The graphs of Lemma \ref{splitlem}. }
\end{figure}
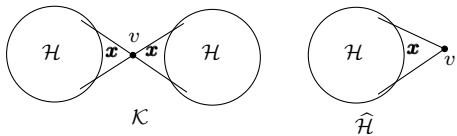

\begin{proof}
Suppose that $\pmb{\alpha}, \pmb{\beta},\pmb{\gamma}$ are  $\mu$-eigenvectors of $L_{\mathcal{H}}^{\rho}+D(\pmb{x})$, $L_{\mathcal{K}}^{\rho}$, and $L_{\widehat{\mathcal{H}}}^{\rho}$, respectively. \\
(i) $\Rightarrow$ (ii): For the index $I$ of $L_{\mathcal{K}}^{\rho}$, we have
{\footnotesize \begin{align}\label{equal}
(\mu\mathbb{I}-L_{\mathcal{H}}^{\rho}-D(\pmb{x}))\pmb{\beta}_{| I}=-
\rho \left(\begin{array}{c|c}
 \pmb{x}  & 0
\end{array}\right)
 \left(\begin{array}{c}
 \pmb{\beta}(v)  \\\cline{1-1}
 \pmb{\beta}_{| I'}
\end{array}\right).
\end{align}}
Multiplying relation (\ref{equal}) by $\pmb{\alpha}^{T}$ from the left, we obtain
\begin{center}
\footnotesize
$0=\pmb{\alpha}^{T}(\mu\mathbb{I}-L_{\mathcal{H}}^{\rho}-D(\pmb{x}))\pmb{\beta}_{| I}=
-\rho \pmb{\alpha}^{T}\pmb{x}\pmb{\beta}(v).$
\end{center}
Hence, $\pmb{\beta}(v)=0$ and $\pmb{\beta}_{| I}=a_1 \pmb{\alpha}$ and similarly $\pmb{\beta}_{| I'}=a_2 \pmb{\alpha}$, for some $a_1,a_2 \in\mathbb{R}$. 
For the index $v$ of $L_{\mathcal{K}}^{\rho}$, we have
{\footnotesize \begin{align*}
(\mu -a+\rho a-2D(\pmb{x}^{T}))\pmb{\beta}(v)=-
\rho (\pmb{x}^{T}\pmb{\beta}_{| I}+\pmb{x}^{T}\pmb{\beta}_{| I'})=-\rho (a_1+a_2)\pmb{x}^{T}\pmb{\alpha}.
\end{align*}}
Hence $a_2=-a_1$ and the proof is done.\\
(ii) $\Rightarrow$ (i): It follows by the relations above in a similar manner.\\
Now, we show that $m_{L_{\widehat{\mathcal{H}}}^{\rho}}(\mu)=0$. We have 
{\footnotesize \begin{align}\label{equal2}
(\mu\mathbb{I}-L_{\mathcal{H}}^{\rho}-D(\pmb{x}))\pmb{\gamma}_{| I}=-\rho \pmb{x}\pmb{\gamma}(v).
\end{align}}
Multiplying relation (\ref{equal2}) by $\pmb{\alpha}^{T}$ from the left, we obtain $\pmb{\gamma}(v)=0$ and $\pmb{\gamma}_{| I}=b\pmb{\alpha}$, for a $b \in\mathbb{R}$. 
For the index $v$ of $L_{\widehat{\mathcal{H}}}^{\rho}$, we have
{\footnotesize \begin{align*}
(\mu -a+\rho a-D(\pmb{x}^{T}))\pmb{\gamma}(v)=-
\rho \pmb{x}^{T}\pmb{\gamma}_{| I}=-\rho b \pmb{x}^{T}\pmb{\alpha}.
\end{align*}}
Hence $b=0$ and $m_{L_{\widehat{\mathcal{H}}}^{\rho}}(\mu)=0$.
\end{proof}

The following corollary is a straightforward consequence of Theorem \ref{spl} and Lemma \ref{splitlem}.
\begin{corollary}\label{splap}
Let  $\mu\in\mathbb{R},\rho\in\mathbb{R}-\{0\}$ and $H,L$  be two disjoint graphs and $u\in V(H)$ and $v\in V(L)$. Suppose that $H_{1},\ldots,H_{t}$ are $t$ copies of $H$ and $E, K, G$ are  graphs as shown below (see Figure \ref{splapfig}). If $m_{L_{K}^{\rho}}(\mu)=1$ and $\pmb{\beta}(v')=0$, for a  $\mu$-eigenvector $\pmb{\beta}$ of $L_{K}^{\rho}$,
then $m_{L_{G}^{\rho}}(\mu)=m_{L_{E}^{\rho}}(\mu)+t-1$.
\end{corollary}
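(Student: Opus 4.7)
The strategy is to apply Theorem \ref{spl} once to $G$, with a judicious choice of blocks, and then observe that the resulting split graph is a disjoint union of $t-1$ copies of $K$ together with a split form of $E$. First, I invoke Lemma \ref{splitlem} to translate the hypothesis: with $\mathcal{H} = A(H)$ and $\pmb{x} = \pmb{e}_{u}$, the matrix $L_K^\rho$ coincides with $L_{\mathcal{K}}^\rho$ of the lemma, so the assumption $m_{L_K^\rho}(\mu) = 1$ together with $\pmb{\beta}(v') = 0$ is precisely condition (ii). The equivalence yields condition (i), which is the hypothesis needed for Theorem \ref{spl}, namely $m_{L_{A(H)}^\rho + D(\pmb{e}_u)}(\mu) = 1$ with some $\mu$-eigenvector $\pmb{\alpha}$ satisfying $\pmb{\alpha}(u) \neq 0$.

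Next, apply Theorem \ref{spl} to $G$, taking the hub to be $v \in V(L)$, the $\mathcal{H}$-block to be $A(H_1)$, and the $\mathcal{L}$-block to be the adjacency matrix of the induced subgraph of $G$ on $J := (V(L) \setminus \{v\}) \cup V(H_2) \cup \cdots \cup V(H_t)$. Because every edge between a copy $H_\ell$ and $L$ runs through $v$, this induced subgraph is the disjoint union of $L - v$ with the isolated copies $H_2, \ldots, H_t$. Under this identification, $G$ coincides with the matrix $\mathcal{G}$ of Theorem \ref{spl}, so the theorem gives $m_{L_G^\rho}(\mu) = m_{L_{\mathcal{E}}^\rho}(\mu)$.

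The key step is to decompose $\mathcal{E}$ into connected components. For each $\ell \in \{2, \ldots, t\}$, the pendant copy of $\widehat{H}$ attached at $u_\ell$ by the splitting construction combines with the isolated copy $H_\ell$ in $J$ to form exactly a copy of the graph $K$. These $t-1$ copies of $K$ are pairwise disjoint and disjoint from the remaining component of $\mathcal{E}$, which consists of $L - v$ with pendant copies of $\widehat{H}$ attached at each neighbour of $v$ in $L$. This remainder is precisely the split graph produced by applying Theorem \ref{spl} to $E$ (with the analogous setup), so its $\mu$-multiplicity equals $m_{L_E^\rho}(\mu)$. Combining with $m_{L_K^\rho}(\mu) = 1$, I conclude $m_{L_G^\rho}(\mu) = m_{L_E^\rho}(\mu) + (t-1)$.

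The main technical obstacle is the combinatorial verification that the components of $\mathcal{E}$ really split as described---in particular, that each pendant $\widehat{H}$ attached at $u_\ell$ touches only $H_\ell$ and nothing else. This rests on the structural observation that removing $v$ from $G$ disconnects every $H_\ell$ from $L - v$, so the pendant $\widehat{H}$ and $H_\ell$ form a genuine connected component of $\mathcal{E}$, namely a copy of $K$.
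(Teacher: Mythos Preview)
Your argument is correct and coincides with the paper's approach, which merely states that the corollary is a straightforward consequence of Theorem~\ref{spl} and Lemma~\ref{splitlem}. One small bookkeeping point worth making explicit: the split $\mathcal{E}$ of $G$ carries a copy of $\widehat{\mathcal{H}}$ for \emph{every} $j\in J$ (not only for neighbours of $v$), so after peeling off the $t-1$ copies of $K$ you actually obtain the split of $E$ together with $(t-1)(|V(H)|-1)$ extra isolated copies of $\widehat{\mathcal{H}}$; these contribute nothing to the multiplicity because Lemma~\ref{splitlem} also yields $m_{L_{\widehat{\mathcal{H}}}^{\rho}}(\mu)=0$.
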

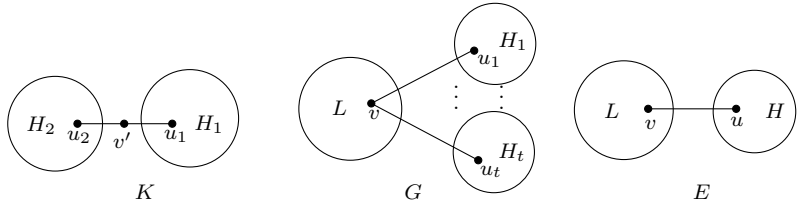
\begin{figure}[H]
\footnotesize
\centering
\begin{tikzpicture}[scale=.9,line cap=round,line join=round,>=triangle 45,x=1.0cm,y=1.0cm]
\draw(-5.86,2.18) circle (0.7cm);
\draw(-3.87,2.26) circle (0.72cm);
\draw (-5.53,2.18)-- (-4.84,2.18);
\draw (-4.13,2.18)-- (-4.84,2.18);
\draw (-4.37,2.21) node[anchor=north west] {$u_1$};
\draw (-5.8,2.19) node[anchor=north west] {$u_2$};
\draw (-5.12,2.18) node[anchor=north west] {$v'$};
\draw (-6.4,2.40) node[anchor=north west] {$H_2$};
\draw (-3.92,2.42) node[anchor=north west] {$H_1$};
\draw(-1.5,2.4) circle (0.76cm);
\draw(0.64,3.36) circle (0.60cm);
\draw(0.62,1.76) circle (0.6cm);
\draw (-1.19,2.48)-- (0.3,3.26);
\draw (-1.19,2.48)-- (0.39,1.64);
\draw (-1.89,2.64) node[anchor=north west] {$L$};
\draw (0.57,3.65) node[anchor=north west] {$H_1$};
\draw (0.57,3.1) node[anchor=north west] {$\vdots$};
\draw (0.57,2) node[anchor=north west] {$H_t$};
\draw (0.26,3.31) node[anchor=north west] {$u_1$};
\draw (-.1,3.1) node[anchor=north west] {$\vdots$};
\draw (0.3,1.68) node[anchor=north west] {$u_t$};
\draw (-1.35,2.5) node[anchor=north west] {$v$};
\draw(2.54,2.38) circle (0.73cm);
\draw (2.73,2.36) node[anchor=north west] {$v$};
\draw (2.13,2.6) node[anchor=north west] {$L$};
\draw(4.47,2.36) circle (0.61cm);
\draw (4.,2.4) node[anchor=north west] {$u$};
\draw (4.51,2.6) node[anchor=north west] {$H$};
\draw (2.90,2.4)-- (4.2,2.4);
\draw (-4.8,1.4) node[anchor=north west] {$K$};
\draw (-0.82,1.4) node[anchor=north west] {$G$};
\draw (3.44,1.4) node[anchor=north west] {$E$};
\begin{scriptsize}
\draw [fill=black] (-4.84,2.18) circle (1.5pt);
\draw [fill=black] (-4.13,2.18) circle (1.5pt);
\draw [fill=black] (-5.53,2.18) circle (1.5pt);
\draw [fill=black] (0.33,3.26) circle (1.5pt);
\draw [fill=black] (0.39,1.64) circle (1.5pt);
\draw [fill=black] (-1.19,2.48) circle (1.5pt);
\draw [fill=black] (2.9,2.4) circle (1.5pt);
\draw [fill=black] (4.2,2.4) circle (1.5pt);
\end{scriptsize}
\end{tikzpicture}
\caption{The graphs of Corollary \ref{splap}. }
\label{splapfig}
\end{figure}

Now, we give the second proof for Theorem \ref{pkqk}.\\

{\noindent \textbf{Second proof of Theorem \ref{pkqk}:} For brevity, we set $q_{k}(G)=q,\,\, p_{k}(G)=p$. With the notations of Corollary \ref{splap}, we put $\mu=4\cos^{2}(\frac{2t\pi}{2(2k+1)})$, $\rho=\pm 1, H=P_{k}$, then by using the splitting method of Corollary \ref{splap}, for $q$ vertices of $G$, $q$ times, we have  $m_{L_{G}^{\rho}}(4\cos^{2}(\frac{t\pi}{2k+1}))=m_{L_{E}^{\rho}}(4\cos^{2}(\frac{t\pi}{2k+1}))+p-q\geq p-q$, for $t\in [k]$.}\\

\begin{example}\label{mu1}\textbf{$\mu=4\cos^{2}(\frac{t\pi}{2k+1}):$}
Suppose that $k\in\mathbb{N}$ and $G,\,E$ are the  graphs as shown below (see Figure \ref{mu1fig}). Then $m_{L(G)}(\mu)=m_{L(E)}(\mu)$ and $m_{Q(G)}(\mu)=m_{Q(E)}(\mu)$.
\end{example}
\begin{figure}[H]
\footnotesize
\centering
\begin{tikzpicture}[scale=.6,line cap=round,line join=round,>=triangle 45,x=1.0cm,y=1.0cm]
\draw(-3.26,2.56) circle (1cm);
\draw (-1.76,2.54)-- (-3.02,3.2);
\draw (-1.76,2.54)-- (-2.98,2.02);
\draw (-2.1,3.22) node[anchor=north west] {$v$};
\draw(2.04,2.52) circle (1.cm);
\draw (3.9,3.14)-- (2.46,3.14);
\draw (3.88,1.98)-- (2.5,1.98);
\draw (3.9,3.14)-- (5.04,3.14);
\draw (3.88,1.98)-- (5.04,1.98);
\draw (-1.76,2.54)-- (-1.22,2.54);
\draw (3.54,3.78) node[anchor=north west] {$v_1$};
\draw (5.,3.4) node[anchor=north west] {$\vdots$};
\draw (3.55,1.98) node[anchor=north west] {$v_r$};
\draw (-3.92,3.54) node[anchor=north west] {$w_1$};
\draw (-3.25,3.4) node[anchor=north west] {$\vdots$};
\draw (-3.92,2.28) node[anchor=north west] {$w_r$};
\draw (4.54,3.78) node[anchor=north west] {$u_{11}$};
\draw (4.1,3.4) node[anchor=north west] {$\vdots$};
\draw (4.54,1.98) node[anchor=north west] {$u_{r1}$};
\draw (1.5,3.44) node[anchor=north west] {$w_1$};
\draw (2.2,3.4) node[anchor=north west] {$\vdots$};
\draw (1.5,2.24) node[anchor=north west] {$w_r$};
\draw (-3.94,3.) node[anchor=north west] {$L$};
\draw (1.46,3.) node[anchor=north west] {$L$};
\draw (-2.54,1.46) node[anchor=north west] {$G$};
\draw (2.94,1.46) node[anchor=north west] {$E$};
\draw (-0.62,2.54)-- (-0.06,2.56);
\draw (5.74,3.16)-- (6.28,3.18);
\draw (5.74,2.02)-- (6.28,2.02);
\draw (-1.6,2.52) node[anchor=north west] {$u_1$};
\draw (-1.3,2.81) node[anchor=north west] {$\cdots$};
\draw (-0.5,2.52) node[anchor=north west] {$u_k$};
\draw (5.94,3.78) node[anchor=north west] {$u_{1k}$};
\draw (4.94,3.42) node[anchor=north west] {$\cdots$};
\draw (4.94,2.27) node[anchor=north west] {$\cdots$};
\draw (5.94,1.98) node[anchor=north west] {$u_{rk}$};
\begin{scriptsize}
\draw [fill=black] (-1.22,2.54) circle (1.5pt);
\draw [fill=black] (-1.76,2.54) circle (1.5pt);
\draw [fill=black] (-3.02,3.2) circle (1.5pt);
\draw [fill=black] (-2.98,2.02) circle (1.5pt);
\draw [fill=black] (3.9,3.14) circle (1.5pt);
\draw [fill=black] (3.88,1.98) circle (1.5pt);
\draw [fill=black] (2.46,3.14) circle (1.5pt);
\draw [fill=black] (2.5,1.98) circle (1.5pt);
\draw [fill=black] (5.04,3.14) circle (1.5pt);
\draw [fill=black] (5.04,1.98) circle (1.5pt);
\draw [fill=black] (-0.62,2.54) circle (1.5pt);
\draw [fill=black] (-0.06,2.56) circle (1.5pt);
\draw [fill=black] (5.74,3.16) circle (1.5pt);
\draw [fill=black] (6.28,3.18) circle (1.5pt);
\draw [fill=black] (5.74,2.02) circle (1.5pt);
\draw [fill=black] (6.28,2.02) circle (1.5pt);
\draw (-11,3.) node[anchor=north west] {$\mu=4\cos^{2}(\frac{t\pi}{2k+1}):$};
\draw (-11,2.2) node[anchor=north west] {$t\in[k].$};
\end{scriptsize}
\end{tikzpicture}
\caption{The graphs of Example \ref{mu1}. }
\label{mu1fig}
\end{figure}
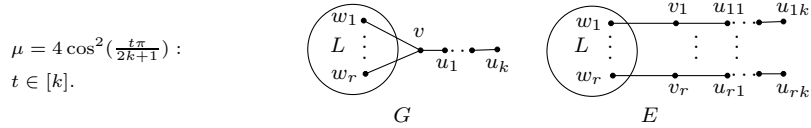

\section{Proofs of the Main Theorems}

For an $n\times m$ matrix $M$ and $I\subseteq [n], J\subseteq [m]$, let $M[I|J]$ denote the submatrix of $M$ formed by rows with index in $I$ and columns with index in $J$.

\begin{proof}[\textbf{Proof of Theorem \ref{symmetricII}}]
Suppose that $m_{L_{\mathcal{H}}^{\rho}}(\mu)=k$ and $\pmb{\alpha}^{1},\ldots,\pmb{\alpha}^{k}$ are the eigenvectors of $L_{\mathcal{H}}^{\rho}$ corresponding to $I_{1}\dot{\cup} I_{2}$ by Theorem \ref{starcorr}. We set
\begin{center}
\footnotesize
$E=\left(\begin{array}{c|c|c}
 \pmb{\alpha}^{1} & \cdots & \pmb{\alpha}^{k}
\end{array}\right)^{T}=
\left(\begin{array}{c|c}
 \mathbb{I}_{k} & \ast
\end{array}\right)$
\end{center}
and extend $\pmb{\alpha}^{i}$ to {\footnotesize $\widehat{\pmb{\alpha}^{i}}=\begin{blockarray}{cc}
\begin{block}{c(c)}
  I& \pmb{\alpha}^{i}\\\cline{1-2}
  J&\pmb{0}  \\
\end{block}
\end{blockarray}$,} for $i\in [k]$.
It is easy to check that $\widehat{\pmb{\alpha}^{1}},\ldots,\widehat{\pmb{\alpha}^{k}}$ are $k$ $\mu$-eigenvectors of $L_{\mathcal{E}}^{\rho}$.

Suppose that $\pmb{\beta}$ is a $\mu$-eigenvector of $L_{\mathcal{G}}^{\rho}$. We show that $\pmb{\beta}_{|I_{1j}}=\rho\pmb{\beta}(v_{j})\pmb{j}$, $j\in [J_{1}]$. We have
{\footnotesize \begin{align*}
(\mu\mathbb{I}-{L_{\mathcal{G}}^{\rho}})_{|I}\pmb{\beta}_{| I}=-
\rho \left(\begin{array}{c|c}
 X &   \\
 0 &\mathcal{A}\\
 0 &
\end{array}\right)
\pmb{\beta}_{| J}
\end{align*}
\begin{equation}\label{eqlam111}
(\mu\mathbb{I}-L_{\mathcal{H}}^{\rho}-D\left(\begin{array}{c|c}
 X  &   \\
 0 &\mathcal{A}\\
 0 &
\end{array}\right))\pmb{\beta}_{| I}=-
\rho \left(\begin{array}{c|c}
 X  &   \\
 0 &\mathcal{A}\\
 0 &
\end{array}\right)
\pmb{\beta}_{| J}.
\end{equation}}
Multiplying relation (\ref{eqlam111}) by $E$ from the left, we obtain
\begin{equation*}
\resizebox{ 0.7\textwidth}{!} 
{$
\pmb{0}=E(\mu\mathbb{I}-L_{\mathcal{H}}^{\rho})\pmb{\beta}_{| I}=
E\left(
\begin{array}{c}
    D(X)\pmb{\beta}_{| I_{1}}  \\ \cline{1-1}
    \pmb{0}   \\
   \pmb{0}
 \end{array}\right)-
\rho E\left(
\begin{array}{c}
  \pmb{\beta}_{|J_{1}}  X\\\cline{1-1}
   \pmb{0}\\
   \pmb{0}
\end{array}\right)-
\rho E\mathcal{A}\pmb{\beta}_{| J_{2}}=
\left(
\begin{array}{c}
    D(X)\pmb{\beta}_{| I_{1}}-\rho  \pmb{\beta}_{|J_{1}}  X  \\ \cline{1-1}
    \pmb{0}   \\
   \pmb{0}
 \end{array}\right).$}
\end{equation*}
Since, $\pmb{x}_j$ is nowhere-zero, hence, we have 
\begin{equation}\label{rel}
\pmb{\beta}_{| I_{1j}}=\rho \pmb{\beta}(v_{j})\pmb{j},\, j\in [|J_{1}|].
\end{equation}
Thus $\pmb{\beta}$ is a $\mu$-eigenvector of $L_{\mathcal{E}}^{\rho}$ by  Lemma \ref{edgeprin2}.

Now, we show that $m_{L_{\mathcal{E}}^{\rho}}(\mu)\geq m_{L_{\mathcal{G}}^{\rho}}(\mu)+|I_{1}|$.
Assume that $m_{L_{\mathcal{G}}^{\rho}}(\mu)=r$ and $\pmb{\beta}^{1},\ldots,\pmb{\beta}^{r}$ are independent eigenvectors of $L_{\mathcal{G}}^{\rho}$. We show that $\pmb{\beta}^{1},\ldots,\pmb{\beta}^{r}$, $\widehat{\pmb{\alpha}^{1}},\ldots,\widehat{\pmb{\alpha}^{|I_{1}|}}$ are independent. Suppose that for some $c_{1},\ldots,c_{r},d_{1},\ldots,d_{|I_{1}|}\in\mathbb{R}$,
\begin{center}
$\sum_{i=1}^{r}c_{i}\pmb{\beta}^{i}=\sum_{i=1}^{|I_{1}|}d_{i}\widehat{\pmb{\alpha}^{i}}.$
\end{center}
Hence $\sum_{i=1}^{r}c_{i}\pmb{\beta}^{i}=\sum_{i=1}^{|I_{1}|}d_{i}\widehat{\pmb{\alpha}^{i}}=(d_{1},\ldots,d_{|I_{1}|},\ast,\ldots,\ast)^{T}\,.$

Thus $(\mu\mathbb{I}-L_{\mathcal{G}}^{\rho})(\sum_{i=1}^{|I_{1}|}d_{i}\widehat{\pmb{\alpha}^{i}})=\pmb{0}$. From relation (\ref{rel}),
$(d_{1},\ldots,d_{|I_{1}|})^{T}=\pmb{0}$ and so $c_{1}=\cdots=c_{r}=0$ and $m_{L_{\mathcal{E}}^{\rho}}(\mu)\geq m_{L_{\mathcal{G}}^{\rho}}(\mu)+|I_{1}|$.

Next, we show that $m_{L_{\mathcal{E}}^{\rho}}(\mu)\leq m_{L_{\mathcal{G}}^{\rho}}(\mu)+|I_{1}|$.

 $\widehat{\pmb{\alpha}^{1}},\ldots,\widehat{\pmb{\alpha}^{|I_{1}|}}$ are $|I_{1}|$ $\mu$-eigenvectors of $L_{\mathcal{E}}^{\rho}$. Suppose that $m_{L_{\mathcal{E}}^{\rho}}(\mu)=s+|I_{1}|$ and $\widehat{\pmb{\alpha}^{1}},\ldots,\widehat{\pmb{\alpha}^{|I_{1}|}}$, $\pmb{\gamma}^{1},\ldots,\pmb{\gamma}^{s}$ are independent $\mu$-eigenvectors of $L_{\mathcal{E}}^{\rho}$.
For $i\in [s]$, we define $\widehat{\pmb{\gamma}^{i}}$ as below,
\begin{equation*}
\resizebox{ 0.4\textwidth}{!} 
{$\widehat{\pmb{\gamma}^{i}}=\pmb{\gamma}^{i}+
\begin{blockarray}{cc}
\begin{block}{c(c)}
I&\sum_{j\in [|J_{1}|]} E^{T}[I|I_{1j}](\rho\pmb{\gamma}^{i}(v_{j})\pmb{j}_{|I_{1}|}-\pmb{\gamma}^{i}_{|I_{1j}}) \\\cline{1-2}
J_{1}& \pmb{0}\\\cline{1-2}
J_{2}& \pmb{0}\\
\end{block}
\end{blockarray}\,.$}
\end{equation*}
We show that $\widehat{\pmb{\gamma}^{1}},\ldots,\widehat{\pmb{\gamma}^{s}}$ are $s$ independent $\mu$-eigenvectors of $L_{\mathcal{G}}^{\rho}$. We have

\begin{equation*}
\resizebox{ 0.8\textwidth}{!} 
{$\begin{aligned}
&L_{\mathcal{G}}^{\rho}\widehat{\pmb{\gamma}^{i}}=(L_{\mathcal{E}}^{\rho}+
\begin{blockarray}{cccc}
I_{1} & I\setminus I_{1}&\{v\}&J_{1}\\
\begin{block}{(cccc)}
 D(X)  & 0 & -\rho X & 0\\
 0  & 0 & 0 & 0\\
  -\rho X^{T} & 0 &D(X^{T})  & 0 \\
 0 & 0 & 0 & 0 \\
\end{block}
\end{blockarray}\,)(\pmb{\gamma}^{i}+
\begin{pmatrix}
\sum_{j\in [|J_{1}|]} E^{T}[I|I_{1j}](\rho\pmb{\gamma}^{i}(v_{j})\pmb{j}_{|I_{1j}|}-\pmb{\gamma}^{i}_{|I_{1j}}) \\
 \pmb{0}\\
 \pmb{0}
\end{pmatrix})\\
&=\mu\pmb{\gamma}^{i}+
\begin{pmatrix}
\mu \sum_{j\in [|J_{1}|]} E^{T}[I|I_{1j}](\rho\pmb{\gamma}^{i}(v_{j})\pmb{j}_{|I_{1j}|}-\pmb{\gamma}^{i}_{|I_{1j}}) \\
 \pmb{0}\\
 \pmb{0}
\end{pmatrix}+
\begin{pmatrix}
D(\pmb{x})\pmb{\gamma}^{i}_{|I_{1}}-\rho\pmb{\gamma}^{i}(v)\pmb{x}\\\
  \pmb{0}\\
  -\rho \pmb{x}^{T}\pmb{\gamma}^{i}_{|I_{1}}+D(\pmb{x}^{T})\pmb{\gamma}^{i}(v) \\
  \pmb{0}
\end{pmatrix}+
\begin{pmatrix}
\rho\pmb{\gamma}^{i}(v)\pmb{x}-D(\pmb{x})\pmb{\gamma}^{i}_{|I_{1}}\\
  \pmb{0}\\
 -{\rho}^{2}\pmb{\gamma}^{i}(v)\pmb{x}^{T}\pmb{j}+ \rho \pmb{x}^{T}\pmb{\gamma}^{i}_{|I_{1}} \\
  \pmb{0}
\end{pmatrix}\\
&=\mu \widehat{\pmb{\gamma}^{i}}.
\end{aligned}$}
\end{equation*}
Now, suppose that
\begin{center}
\footnotesize
$\pmb{0}=\sum_{i=1}^{s}c_{i}\widehat{\pmb{\gamma}^{i}}=
\sum_{i=1}^{s}c_{i}\pmb{\gamma}^{i}+
\begin{pmatrix}
\sum_{j\in [|J_{1}|]} E^{T}[I|I_{1j}](\rho\pmb{\gamma}^{i}(v_{j})\pmb{j}_{|I_{1}|}-\pmb{\gamma}^{i}_{|I_{1j}}) \\
 \pmb{0}\\
 \pmb{0}
\end{pmatrix},$
\end{center}
for some $c_{1},\ldots,c_{s}\in\mathbb{R}$. So, $\sum_{i=1}^{s}c_{i}\pmb{\gamma}^{i}=\sum_{i\in [s],j\in [|I_{1}|]}d_{ij}\widehat{\pmb{\alpha}^{j}}$, for some real numbers $d_{ij}$.
Thus $c_{1}=\cdots=c_{s}=0$ and hence $m_{L_{\mathcal{E}}^{\rho}}(\mu)\leq m_{L_{\mathcal{G}}^{\rho}}(\mu)+|I_{1}|$.
This inequality implies that $m_{L_{\mathcal{E}}^{\rho}}(\mu)=m_{L_{\mathcal{G}}^{\rho}}(\mu)+|I_{1}|$.
\end{proof}

\begin{proof}[\textbf{Proof of Theorem \ref{switching}}]
Suppose that $m_{L_{\mathcal{H}}^{\rho}}(\mu)=k$ and $\pmb{\alpha}^{1},\ldots,\pmb{\alpha}^{k}$ are the eigenvectors of $L_{\mathcal{H}}^{\rho}$ corresponding to $I_{1}\dot{\cup} I_{2}$ by Theorem \ref{starcorr}. We set
{\footnotesize
$E=\left(\begin{array}{c|c|c}
 \pmb{\alpha}^{1} & \cdots & \pmb{\alpha}^{k}
\end{array}\right)^{T}=
\left(\begin{array}{c|c}
 \mathbb{I}_{k} & \ast
\end{array}\right)$}
and extend $\pmb{\alpha}^{i}$ to {\footnotesize $\widehat{\pmb{\alpha}^{i}}=\begin{blockarray}{cc}
\begin{block}{c(c)}
  I& \pmb{\alpha}^{i}\\\cline{1-2}
  J&\pmb{0}  \\
\end{block}
\end{blockarray}$,} for $i\in [k]$.
It is easy to check that $\widehat{\pmb{\alpha}^{1}},\ldots,\widehat{\pmb{\alpha}^{k}}$ are $k$ $\mu$-eigenvectors of $L_{\mathcal{E}}^{\rho}$.

Suppose that $\pmb{\beta}$ is a $\mu$-eigenvector of $L_{\mathcal{G}}^{\rho}$. We have
{\footnotesize \begin{align*}
(\mu\mathbb{I}-{L_{\mathcal{G}}^{\rho}})_{|I}\pmb{\beta}_{| I}=-
\rho \left(\begin{array}{c|c}
 X  &   \\
 0 &\mathcal{A}\\
 0 &
\end{array}\right)
\pmb{\beta}_{| J}
\end{align*}
\begin{equation}\label{eqlam12}
(\mu\mathbb{I}-L_{\mathcal{H}}^{\rho}-D\left(\begin{array}{c|c}
 X  &   \\
 0 &\mathcal{A}\\
 0 &
\end{array}\right))\pmb{\beta}_{| I}=
\left(\begin{array}{c|c}
  L_{S}^{\rho} & 0  \\ \cline{1-2}
 0 & 0
\end{array}\right)
\pmb{\beta}_{| I}-
\rho \left(\begin{array}{c|c}
 X  &   \\
 0 &\mathcal{A}\\
 0 &
\end{array}\right)
\pmb{\beta}_{| J}.
\end{equation}}
Multiplying relation (\ref{eqlam12}) by $E$ from the left, we obtain
\begin{equation*}
\resizebox{ 0.8\textwidth}{!} 
{$
\pmb{0}=E(\mu\mathbb{I}-L_{\mathcal{H}}^{\rho})\pmb{\beta}_{| I}=
E\left(
\begin{array}{c}
   (L_{S}^{\rho}+D(X))\pmb{\beta}_{| I_{1}}  \\ \cline{1-1}
    \pmb{0}   \\
   \pmb{0}
 \end{array}\right)-
\rho E\left(
\begin{array}{c}
   X\pmb{\beta}_{| J_{1}} \\\cline{1-1}
   \pmb{0}\\
   \pmb{0}
\end{array}\right)-
\rho E\mathcal{A}\pmb{\beta}_{| J_{2}}=
\left(
\begin{array}{c}
     (L_{S}^{\rho}+D(X))\pmb{\beta}_{| I_{1}}-\rho X\pmb{\beta}_{| J_{1}}\\ \cline{1-1}
    \pmb{0}   \\
   \pmb{0}
 \end{array}\right).$}
\end{equation*}
Hence, 
{\footnotesize \begin{equation}\label{rel2}
\pmb{\beta}_{| I_{1}}=\rho  (L_{S}^{\rho}+D(X))^{-1}X\pmb{\beta}_{| J_{1}}.
\end{equation}}
So, $\rho X^{T}\pmb{\beta}_{| I_{1}}=(L_{S'}^{\rho}+D(X^{T}))\pmb{\beta}_{| J_{1}}$ and it is easy to see that $\pmb{\beta}$ is a $\mu$-eigenvector of $L_{\mathcal{E}}^{\rho}$.

Now, we show that $m_{L_{\mathcal{E}}^{\rho}}(\mu)\geq m_{L_{\mathcal{G}}^{\rho}}(\mu)+|I_{1}|$.
Assume that $m_{L_{\mathcal{G}}^{\rho}}(\mu)=r$ and $\pmb{\beta}^{1},\ldots,\pmb{\beta}^{r}$ are independent eigenvectors of $L_{\mathcal{G}}^{\rho}$. We show that $\pmb{\beta}^{1},\ldots,\pmb{\beta}^{r}$, $\widehat{\pmb{\alpha}^{1}},\ldots,\widehat{\pmb{\alpha}^{|I_{1}|}}$ are independent. Suppose that for some $c_{1},\ldots,c_{r},d_{1},\ldots,d_{|I_{1}|}\in\mathbb{R}$,
\begin{center}
$\sum_{i=1}^{r}c_{i}\pmb{\beta}^{i}=\sum_{i=1}^{|I_{1}|}d_{i}\widehat{\pmb{\alpha}^{i}}=(d_{1},\ldots,d_{|I_{1}|},\ast,\ldots,\ast)^{T}.$
\end{center}

Thus $(\mu\mathbb{I}-L_{\mathcal{G}}^{\rho})(\sum_{i=1}^{|I_{1}|}d_{i}\widehat{\pmb{\alpha}^{i}})=\pmb{0}$. From relation (\ref{rel2}),
$(d_{1},\ldots,d_{|I_{1}|})^{T}=\pmb{0}$ and so $c_{1}=\cdots=c_{r}=0$. Hence, $m_{L_{\mathcal{E}}^{\rho}}(\mu)\geq m_{L_{\mathcal{G}}^{\rho}}(\mu)+|I_{1}|$.

Next, we show that $m_{L_{\mathcal{E}}^{\rho}}(\mu)\leq m_{L_{\mathcal{G}}^{\rho}}(\mu)+|I_{1}|$.

 $\widehat{\pmb{\alpha}^{1}},\ldots,\widehat{\pmb{\alpha}^{|I_{1}|}}$ are $|I_{1}|$ $\mu$-eigenvectors of $L_{\mathcal{E}}^{\rho}$. Suppose that $m_{L_{\mathcal{E}}^{\rho}}(\mu)=s+|I_{1}|$ and $\widehat{\pmb{\alpha}^{1}},\ldots,\widehat{\pmb{\alpha}^{|I_{1}|}}$, $\pmb{\gamma}^{1},\ldots,\pmb{\gamma}^{s}$ are independent $\mu$-eigenvectors of $L_{\mathcal{E}}^{\rho}$.
For $i\in [s]$, we define $\widehat{\pmb{\gamma}^{i}}$ as below,
{\footnotesize \begin{equation*}
\widehat{\pmb{\gamma}^{i}}=\pmb{\gamma}^{i}+
\begin{blockarray}{cc}
\begin{block}{c(c)}
I&E^{T}[I|I_{1}]( \rho (L_{S}^{\rho}+D(X))^{-1}X\pmb{\gamma}^{i}_{|J_{1}}-\pmb{\gamma}^{i}_{|I_{1}}) \\\cline{1-2}
J& \pmb{0}\\
\end{block}
\end{blockarray}\,.
\end{equation*}}
We show that $\widehat{\pmb{\gamma}^{1}},\ldots,\widehat{\pmb{\gamma}^{s}}$ are $s$ independent $\mu$-eigenvectors of $L_{\mathcal{G}}^{\rho}$. We have
\begin{equation*}
\resizebox{ 0.95\textwidth}{!} 
{$
\begin{aligned}
&L_{\mathcal{G}}^{\rho}\widehat{\pmb{\gamma}^{i}}=(L_{\mathcal{E}}^{\rho}+
\begin{blockarray}{cccc}
I_{1} & I\setminus I_{1}&J_{1}&J_{3}\\
\begin{block}{(cccc)}
 L_{S}^{\rho}+D(X) & 0 & -\rho X & 0\\
 0  & 0 & 0 & 0\\
 -\rho X^{T} & 0 &L_{S'}^{\rho}+D(X^{T})  & 0 \\
 0 & 0 & 0 & 0 \\
\end{block}
\end{blockarray}\,)(\pmb{\gamma}^{i}+
\begin{pmatrix}
E^{T}[I|I_{1}]( \rho (L_{S}^{\rho}+D(X))^{-1}X\pmb{\gamma}^{i}_{|J_{1}}-\pmb{\gamma}^{i}_{|I_{1}}) \\
 \pmb{0}\\
 \pmb{0}
\end{pmatrix})\\
&=\mu\pmb{\gamma}^{i}+
\begin{pmatrix}
\mu E^{T}[I|I_{1}]( \rho (L_{S}^{\rho}+D(X))^{-1}X\pmb{\gamma}^{i}_{|J_{1}}-\pmb{\gamma}^{i}_{|I_{1}}) \\
 \pmb{0}\\
 \pmb{0}
\end{pmatrix}+
\begin{pmatrix}
   (L_{S}^{\rho}+D(X))\pmb{\gamma}_{| I_{1}}-\rho X\pmb{\gamma}_{| J_{1}}\\
  \pmb{0}\\
  -\rho X^{T}\pmb{\gamma}_{| I_{1}}+(L_{S'}^{\rho}+D(X^{T}))\pmb{\gamma}_{| J_{1}} \\
  \pmb{0}
\end{pmatrix}+
\begin{pmatrix}
 \rho X\pmb{\gamma}_{| J_{1}}-(L_{S}^{\rho}+D(X))\pmb{\gamma}_{| I_{1}}\\
  \pmb{0}\\
  -{\rho}^{2}X^{T}(L_{S}^{\rho}+D(X))^{-1}X\pmb{\gamma}^{i}_{|J_{1}}+\rho X^{T}\pmb{\gamma}^{i}_{|I_{1}} \\
  \pmb{0}
\end{pmatrix}\\
&=\mu \widehat{\pmb{\gamma}^{i}}.
\end{aligned}$}
\end{equation*}
Now, suppose that
\begin{equation*}
\resizebox{0.64 \textwidth}{!} 
{$\pmb{0}=\sum_{i=1}^{s}c_{i}\widehat{\pmb{\gamma}^{i}}=
\sum_{i=1}^{s}c_{i}\pmb{\gamma}^{i}+
\begin{pmatrix}
E^{T}[I|I_{1}](\rho(L_{S}^{\rho}+D(X))^{-1}X\sum_{i=1}^{s}c_{i}\pmb{\gamma}^{i}_{|J_{1}}-\sum_{i=1}^{s}c_{i}\pmb{\gamma}^{i}_{|I_{1}}) \\
 \pmb{0}\\
 \pmb{0}
\end{pmatrix},$}
\end{equation*}
for some $c_{1},\ldots,c_{s}\in\mathbb{R}$. So,  $\sum_{i=1}^{s}c_{i}\pmb{\gamma}^{i}=\sum_{i\in [s],i\in [|I_{1}|]}d_{ij}\widehat{\pmb{\alpha}^{j}}$, for some real numbers $d_{ij}$.
Thus $c_{1}=\cdots=c_{s}=0$ and hence $m_{L_{\mathcal{E}}^{\rho}}(\mu)\leq m_{L_{\mathcal{G}}^{\rho}}(\mu)+|I_{1}|$.
This inequality implies that $m_{L_{\mathcal{E}}^{\rho}}(\mu)=m_{L_{\mathcal{G}}^{\rho}}(\mu)+|I_{1}|$.
Finally, to prove $m_{L_{\widehat{\mathcal{H}}}^{\rho}}(\mu)=m_{L_{\mathcal{H}}^{\rho}}(\mu)-|I_{1}|$, it suffices to put $X=0$ and $\mathcal{L}=0$.
\end{proof}


\begin{proof}[\textbf{Proof of Theorem \ref{cluster}}]

We extend $\pmb{\beta}^{i}$ to $\widehat{\pmb{\beta}^{i}}$ and $\pmb{\gamma}^{i}$ to $\widehat{\pmb{\gamma}^{i}}$, where
\begin{equation*}
\resizebox{0.4 \textwidth}{!}{ 
$\widehat{\pmb{\beta}^{i}}=\begin{blockarray}{cc}
\begin{block}{c(c)}
  I& \pmb{\beta}^{i}_{|I}\\
  I_{1}&\pmb{0}  \\
\vdots&\vdots  \\
  I_{r-1}&\pmb{0}  \\
  J&\pmb{\beta}^{i}_{|J}  \\
  J_{1}&\pmb{\beta}^{i}_{|J_{1}}  \\ 
\end{block}
\end{blockarray}\,\,,\,\, i\in [s],\,\,
\widehat{\pmb{\gamma}^{i}}=\begin{blockarray}{cc}
\begin{block}{c(c)}
  I& \pmb{\gamma}^{i}_{|I}\\
  I_{1}&\pmb{0}  \\
\vdots&\vdots  \\
 I_{i}& \pmb{\gamma}^{i}_{|I_{i}}\\
\vdots&\vdots  \\
  I_{r-1}&\pmb{0}  \\
  J&\pmb{\gamma}^{i}_{|J}  \\
  J_{1}&\pmb{0}  \\ 
\end{block}
\end{blockarray}\,\,,\,\,i\in [r-1].$
}\end{equation*}
It is easy to check that $\{\widehat{\pmb{\beta}^{i}}\}_{i=1}^{s}$ and $\{\widehat{\pmb{\gamma}^{i}}\}_{i=1}^{r-1}$ are  $\mu$-eigenvectors of $L_{\mathcal{G}}^{\rho}$. By the definitions, the independence of these eigenvectors is obvious.
\end{proof}


\begin{proof}[\textbf{Proof of Theorem \ref{spl}}]
Suppose that  $\pmb{\alpha}$ is the $\mu$-eigenvector of $L_{\mathcal{H}}^{\rho}+D(\pmb{x})$ such that $\pmb{x}^{T}\pmb{\alpha}=1$ and $\pmb{\beta}$ is a $\mu$-eigenvector of $L_{\mathcal{G}}^{\rho}$. We have
{\footnotesize \begin{align*}
(\mu\mathbb{I}-{L_{\mathcal{G}}^{\rho}})_{|I}\pmb{\beta}_{| I}=-
\rho \left(\begin{array}{c|c}
 \pmb{x}  & 0
\end{array}\right)
 \left(\begin{array}{c}
 \pmb{\beta}(v)  \\\cline{1-1}
 \pmb{\beta}_{| J}
\end{array}\right)
\end{align*}
\begin{equation}\label{eqlam121}
(\mu\mathbb{I}-L_{\mathcal{H}}^{\rho}-D\left(\begin{array}{c|c}
 \pmb{x}  & 0 
\end{array}\right))\pmb{\beta}_{| I}=
-\rho \left(\begin{array}{c|c}
 \pmb{x}  & 0  
\end{array}\right)
 \left(\begin{array}{c}
 \pmb{\beta}(v)  \\\cline{1-1}
 \pmb{\beta}_{| J}
\end{array}\right).
\end{equation}}
Multiplying relation (\ref{eqlam121}) by $\pmb{\alpha}^{T}$ from the left, we obtain
\begin{center}
\footnotesize
$0=\pmb{\alpha}^{T}(\mu\mathbb{I}-L_{\mathcal{H}}^{\rho}-D(\pmb{x}))\pmb{\beta}_{| I}=
-\rho \pmb{\alpha}^{T}\pmb{x}\pmb{\beta}(v).$
\end{center}
Hence, $\pmb{\beta}(v)=0$ and $\pmb{\beta}_{| I}=a\pmb{\alpha}$ for an $a\in\mathbb{R}$ .
For the index $v$ of $L_{\mathcal{G}}^{\rho}$, we have
{\footnotesize \begin{align*}
(\mu -a+\rho a-D(\pmb{x}^{T})-D(\pmb{y}^{T}))\pmb{\beta}(v)=-
\rho (\pmb{x}^{T}\pmb{\beta}_{| I}+\pmb{y}^{T}\pmb{\beta}_{| J}).
\end{align*}}
Hence, 
{\footnotesize \begin{equation}\label{rel222}
\pmb{x}^{T}\pmb{\beta}_{| I}=-\pmb{y}^{T}\pmb{\beta}_{| J},\quad a=-\frac{\pmb{y}^{T}\pmb{\beta}_{| J}}{\pmb{x}^{T}\pmb{\alpha}}=-\pmb{y}^{T}\pmb{\beta}_{| J}.
\end{equation}}
By similar method, if $\pmb{\gamma}$ is a $\mu$-eigenvector of $L_{\mathcal{E}}^{\rho}$, then
\begin{equation}\label{gam}
\pmb{\gamma}(v_{j})=0,\quad \pmb{\gamma}_{| I_{j}}=a_{j}\pmb{\alpha},\text{ and } a_{j}=\pmb{x}^{T}\pmb{\gamma}_{| I_{j}}=-y_{j}\pmb{\gamma}(j),\text{ for } j\in [|J|]. 
\end{equation}
Now, we show that $m_{L_{\mathcal{E}}^{\rho}}(\mu)\geq m_{L_{\mathcal{G}}^{\rho}}(\mu)$.
Assume that $m_{L_{\mathcal{G}}^{\rho}}(\mu)=r$ and $\pmb{\beta}^{1},\ldots,\pmb{\beta}^{r}$ are independent $\mu$-eigenvectors of $L_{\mathcal{G}}^{\rho}$. Put
{\footnotesize \[
\widehat{\pmb{\beta}^{i}}=\begin{blockarray}{cc}
\begin{block}{c(c)}
I_{1} & -y_{1}\pmb{\beta}^{i}(1)\pmb{\alpha} \\ \cline{1-2}
v_{1} & 0\\\cline{1-2}
\vdots&\vdots\\\cline{1-2}
I_{|J|} & -y_{|J|}\pmb{\beta}^{i}(|J|)\pmb{\alpha}  \\ \cline{1-2}
v_{|J|} & 0\\\cline{1-2}
J& \pmb{\beta}^{i}_{|J}\\
\end{block}
\end{blockarray},\quad i\in [r]. 
\]}
It is easy to see that $\widehat{\pmb{\beta}^{i}}$ is a $\mu$-eigenvector of $L_{\mathcal{E}}^{\rho}$. We show that  $\widehat{\pmb{\beta}^{1}},\ldots,\widehat{\pmb{\beta}^{r}}$ are independent.
From the relation (\ref{rel222}), we can conclude that $\pmb{\beta}^{1},\ldots,\pmb{\beta}^{r}$ are independent, if and only if $\pmb{\beta}^{1}_{|J},\ldots,\pmb{\beta}^{r}_{|J}$ are independent. So, $\widehat{\pmb{\beta}^{1}},\ldots,\widehat{\pmb{\beta}^{s}}$ are independent and hence, $m_{L_{\mathcal{E}}^{\rho}}(\mu)\geq m_{L_{\mathcal{G}}^{\rho}}(\mu)$.

Next, we show that $m_{L_{\mathcal{E}}^{\rho}}(\mu)\leq m_{L_{\mathcal{G}}^{\rho}}(\mu)$.
Suppose that $m_{L_{\mathcal{E}}^{\rho}}(\mu)=s$ and $\pmb{\gamma}^{1},\ldots,\pmb{\gamma}^{s}$ are independent $\mu$-eigenvectors of $L_{\mathcal{E}}^{\rho}$.
For $i\in [s]$, put
{\footnotesize \[
\widehat{\pmb{\gamma}^{i}}=\begin{blockarray}{cc}
\begin{block}{c(c)}
I& -(\pmb{y}^{T}\pmb{\gamma}^{i}_{|J})\pmb{\alpha} \\ \cline{1-2}
v& 0\\\cline{1-2}
J& \pmb{\gamma}^{i}_{|J}\\
\end{block}
\end{blockarray}. 
\]}
It is easy to see that $\widehat{\pmb{\gamma}^{i}}$ is a $\mu$-eigenvector of $L_{\mathcal{G}}^{\rho}$.
From the relation (\ref{gam}), $\pmb{\gamma}^{1},\ldots,\pmb{\gamma}^{s}$ are independent, if and only if $\pmb{\gamma}^{1}_{|J},\ldots,\pmb{\gamma}^{s}_{|J}$ are independent. So, $\widehat{\pmb{\gamma}^{1}},\ldots,\widehat{\pmb{\gamma}^{s}}$ are independent and hence $m_{L_{\mathcal{E}}^{\rho}}(\mu)\leq m_{L_{\mathcal{G}}^{\rho}}(\mu)$.
This inequality implies that $m_{L_{\mathcal{E}}^{\rho}}(\mu)=m_{L_{\mathcal{G}}^{\rho}}(\mu)$.
\end{proof}

\subsection*{Acknowledgements}
We would like to thank the unknown referee for his/her valuable comments. The authors are indebted to the School of Mathematics, Institute for Research in Fundamental
Sciences (IPM), Tehran, Iran for the support. The research of the second author were in part supported by grant from IPM (No. 94050116).


\end{document}